\newcommand\abs[1]{\lvert #1\rvert}
\newtheorem{theorem}{Theorem}[section]
\newtheorem{lemma}[theorem]{Lemma}
\newtheorem{corollary}[theorem]{Corollary}
\newtheorem{proposition}[theorem]{Proposition}
\newtheorem{conjecture}[theorem]{Conjecture}
\newtheorem*{claim*}{Claim}
\newtheorem{question}{Question}
\theoremstyle{definition}
\newcommand\dist{\operatorname{dist}}
\newcommand{\bd}{\mathsf{bd}}
\newcommand{\interior}{\mathsf{int}}
\newcommand{\init}{\mathsf{init}}
\newcommand{\EP}{Erd\H{o}s-P\'osa}
\newcommand{\buff}{31}
\newcommand{\s}{\mathcal{S}}
\newcommand{\h}{\mathcal{H}}
\newcommand{\N}{\mathbb{N}}
\newcommand{\R}{\mathbb{R}}
\newcommand{\cF}{\mathcal{F}}
\begin{document}
\title[On the Erd\H{o}s-P\'osa property for long holes in $C_4$-free graphs]{On the Erd\H{o}s-P\'osa property for long holes in $C_4$-free graphs}
\author{Tony Huynh}
\author{O-joung Kwon}
\address[Tony Huynh]{School of Mathematics, Monash University, Melbourne, Australia}
\email{tony.bourbaki@gmail.com}
\address[O-joung Kwon]{Department of Mathematics, Incheon National University, Incheon, South Korea \\ Discrete Mathematics Group, Institute~for~Basic~Science~(IBS), Daejeon,~South~Korea}
\email{ojoungkwon@gmail.com}
\thanks{The first author was supported by ERC grant \emph{FOREFRONT} (grant agreement no. 615640) funded by the European Research Council under the EU's 7th Framework Programme (FP7/2007-2013), and currently by the Australian Research Council.  The second author is supported by the National Research Foundation of Korea (NRF) grant funded by the Ministry of Education (No. NRF-2018R1D1A1B07050294) and by the Institute for Basic Science (IBS-R029-C1). }
\date{\today}
\begin{abstract}
We prove that there exists a function $f(k)=\mathcal{O}(k^2 \log k)$ such that for every $C_4$-free graph $G$ and every $k \in \N$, $G$ either contains $k$ vertex-disjoint holes of length at least $6$, or a set $X$ of at most $f(k)$ vertices such that $G-X$ has no hole of length at least $6$.  This answers a question of Kim and Kwon [Erd{\H{o}}s-P{\'o}sa property of chordless cycles and its applications. JCTB 2020].

\end{abstract}
\keywords{Erd\H{o}s-P\'osa property, holes, chordal graphs}
\maketitle

\section{Introduction}
A classic theorem of Erd\H{o}s and P\'osa~\cite{ErdosP1965} asserts that there exists a function $f: \N \to \R$ such that for every graph $G$ and every $k \in\N$, $G$ either contains $k$ vertex-disjoint cycles, or a set $X$ of at most $f(k)$ vertices such that $G-X$ has no cycle. 
The \EP{} theorem has since been extensively generalized, where the objects to be packed are not necessarily cycles, the containment relation is not necessarily the subgraph relation, and the host class is not necessarily the class of all graphs.  These results are too numerous to cite properly, but we refer the interested reader to a recent survey of Raymond and Thilikos~\cite{Raymond2017}.

In this paper, we are interested in the induced subgraph relation.  A graph is \emph{$H$-free} if it has no induced subgraph isomorphic to $H$.   A family $\cF$ of graphs has the \emph{(induced) \EP{} property} if there exists a function $f:\mathbb{N}\to \mathbb{R}$ such that for every graph $G$ and every $k\in \mathbb{N}$, $G$ either contains $k$ vertex-disjoint (induced) subgraphs each isomorphic to a graph in $\cF$, or a set $X$ of at most $f(k)$ vertices such that $G-X$ has no (induced) subgraph  isomorphic to a graph in $\cF$. 

For each $\ell \geq 3$, we let $C_\ell$ denote the cycle of length $\ell$.  A \emph{hole} in a graph is an induced $C_\ell$ with $\ell \geq 4$.  A graph is \emph{chordal} if it has no hole.  The class of chordal graphs is a widely studied class of graphs, in part because many NP-complete problems can be efficiently solved when restricted to chordal graphs.  Given a graph $G$ and $k \in \N$ as input, the {\sc Chordal Deletion} problem asks whether there exists a set $X$ of at most $k$ vertices in $G$ such that $G-X$ is chordal.  Motivated by the {\sc Chordal Deletion} problem, Jansen and Pilipczuk~\cite{JansenP2017} asked whether cycles of length at least $4$ have the induced \EP{} property.  
This was recently answered in the affirmative by Kim and Kwon~\cite{KK20}.  

\begin{theorem}[Kim and Kwon~\cite{KK20}] \label{kimkwon}
There exists a function $g(k)=\mathcal{O}(k^2 \log k)$ such that for every graph $G$ and every $k \in \N$, $G$ either contains $k$ vertex-disjoint holes, or a set $X$ of at most $g(k)$ vertices such that $G-X$ has no hole.
\end{theorem}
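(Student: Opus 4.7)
The plan is to prove the theorem by induction on $k$, targeting the recurrence $g(k) \le g(k-1) + \mathcal{O}(k \log k)$, which yields $g(k) = \mathcal{O}(k^2 \log k)$. The base case $k=1$ is trivial with $g(1)=0$: either $G$ has no hole (take $X=\emptyset$) or it has one, in which case the packing consists of a single hole.

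For the inductive step, assume $G$ contains no $k$ vertex-disjoint holes. If $G$ is chordal, $X=\emptyset$ suffices, so pick a \emph{shortest} hole $C$ in $G$. The shortest-hole hypothesis constrains attachments: any vertex $v \notin V(C)$ has its neighbors on $C$ confined to a short arc, since otherwise splicing $v$ into $C$ via a subpath would produce a shorter hole. Applying the induction hypothesis to $G - V(C)$, either it contains $k-1$ vertex-disjoint holes (which together with $C$ yield the forbidden packing of $k$), or there is a set $X'$ of size at most $g(k-1)$ hitting every hole of $G - V(C)$. Every hole in $G - X'$ that we still need to destroy must therefore use $V(C)$.

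To handle holes meeting $V(C)$, I would call an induced path $P$ in $G - V(C)$ a \emph{$C$-linker} if both of its endpoints have neighbors on $C$ and combining $P$ with a suitable sub-arc of $C$ produces a hole; every hole through $V(C)$ other than $C$ itself gives rise to such a linker. Since $k$ vertex-disjoint $C$-linkers would yield $k$ vertex-disjoint holes, we may assume no such packing exists. An \EP{} theorem for $C$-linkers, adapted from the classical $\mathcal{O}(k \log k)$ bound for cycles (with Gallai's theorem on $A$-paths in the background), then provides a hitting set $Y$ of size $\mathcal{O}(k \log k)$ for all $C$-linkers. Setting $X := X' \cup Y \cup \{v\}$ for any $v \in V(C)$ gives a hole-hitting set of size at most $g(k-1) + \mathcal{O}(k \log k)$, as needed.

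The principal obstacle is proving this \EP{} property for $C$-linkers in the \emph{induced} setting. Two $C$-linkers that are vertex-disjoint as paths in $G - V(C)$ may, when closed up with arcs of $C$, create chords that destroy the induced property of the resulting cycles, so vertex-disjoint linkers do not automatically translate into vertex-disjoint holes. Overcoming this requires delicate use of the shortest-hole hypothesis to control cross-edges between attachments, likely by associating each attachment vertex with a canonical short arc of $C$ and restricting to linkers whose chosen arcs are pairwise compatible. The $\log k$ factor will then arise from the usual halving or nested-separator argument behind the classical cycle \EP{} bound; the technical heart of the proof is checking that each step of this iteration preserves induced-ness.
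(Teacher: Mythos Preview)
Your high-level induction --- pick a shortest hole $C$, recurse on $G-V(C)$, then hit the remaining holes through $C$ with $\mathcal O(k\log k)$ vertices --- is the correct skeleton, and it is essentially the scaffold the paper uses as well (see the reduction of Theorem~\ref{thm:main} to Theorem~\ref{thm:main2} in Section~\ref{sec:overview}). But the step you leave open is the entire theorem. Neither Gallai's $A$-path theorem nor the classical cycle \EP{} argument transfers to induced objects, and your ``$C$-linker'' formulation already has a gap before any packing/covering duality is invoked: it is not clear that every hole through $V(C)$ contains a $C$-linker in your sense, since a component of $H-V(C)$ may have internal vertices with neighbours on $C$ that create chords against every candidate arc. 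Handling this requires separating vertices whose $C$-neighbourhood is a short arc from those that nearly dominate $C$ (the latter form a clique), and then controlling how a hole can pass between the third neighbourhood of $C$ and the rest of $G$. That is precisely what Sections~\ref{sec:structure}--\ref{sec:avoiding} of the paper are devoted to, via almost-$C$-dominating vertices, a sparse ear decomposition, and two separate greedy packings; your sketch of ``canonical short arcs'' and a ``halving argument'' does not approach this.

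The paper's own derivation of the stated theorem is also structurally different from your direct attack: rather than working with arbitrary holes, it first greedily packs induced $C_4$'s and $C_5$'s, and if fewer than $k$ are found, deleting their at most $5(k-1)$ vertices leaves a $C_4$-free graph in which every remaining hole has length at least $6$. Theorem~\ref{thm:main} then supplies the $\mathcal O(k^2\log k)$ hitting set in that setting. The point of passing to $C_4$-free graphs is concrete: no two vertices share two common neighbours, which is exactly what tames the cross-edges you worried about when trying to splice linkers with arcs of $C$. Without that reduction you face those cross-edges in full generality, and your proposal offers no mechanism for dealing with them.
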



Whenever the \EP{} property holds for a family of objects, it is natural to ask if an analogous ``long'' version also holds.  That is, does the \EP{} property still hold if we insist that the objects are of length at least $\ell$, for some constant $\ell$?  For example, by Menger's theorem~\cite{Menger27}, the \EP{} property holds for ($S$-$T$)-paths (with $f(k)=k$) and Montejano and Neumann-Lara~\cite{MN84} showed that it also holds for ($S$-$T$)-paths of length at least $\ell$.  The original \EP{} theorem was extended to long cycles by Birmelé, Bondy, and Reed~\cite{BBR07} (see~\cite{FH14, MNSW17} for improved bounds).  Kakimura, Kawarabayashi, and Marx~\cite{KKM11} showed that $S$-cycles (cycles with at least one vertex in a prescribed vertex set $S$) have the \EP{} property, and this was extended to long $S$-cycles by Bruhn, Joos, and Schaudt~\cite{BJS18}. Huynh, Joos, and Wollan~\cite{HJW19} showed that $(S,T)$-cycles (cycles that use at least one vertex from each of two prescribed vertex sets $S$ and $T$) and long $(S,T)$-cycles both have the \EP{} property.  

These results might lead one to conjecture that whenever the \EP{} property holds, the ``long'' \EP{} property also holds.  However, this turns out to be false for holes, in the following strong sense.  Let $\alpha \in \N$ and $\mathcal F$ be a family of graphs.  We say that $\mathcal F$ has the \emph{$\frac{1}{\alpha}$-integral (induced) \EP{} property} if there exists a function $f: \N \to \R$ so that for every graph $G$ and every $k \in \N$, $G$ either contains $k$ vertex-disjoint (induced) subgraphs, each isomorphic to a graph in $\mathcal F$ and such that every vertex of $G$ is contained in at most $\alpha$ of these (induced) subgraphs, or a set $X$ of at most $f(k)$ vertices such that $G-X$ has no (induced) subgraph in $\mathcal F$.   Kim and Kwon~\cite{KK20} proved that for all $\alpha \in \N$ and $\ell \geq 5$, cycles of length at least $\ell$ do not have the $\frac{1}{\alpha}$-integral induced \EP{} property.  


\begin{theorem}[Kim and Kwon~\cite{KK20}] \label{counterexamples}
Let $\alpha \in \N$ and $\ell \geq 5$. 
There is no function 
 $f: \N \to \R$ such that for every graph $G$ and every $k \in \N$, $G$ either contains $k$ vertex-disjoint holes of length at least $\ell$ such that every vertex of $G$ is contained in at most $\alpha$ of these holes, or a set $X$ of at most $f(k)$ vertices such that $G-X$ has no hole of length at least $\ell$.
\end{theorem}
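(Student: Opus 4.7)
The plan is to exhibit, for each $n \in \N$, a graph $G_n$ (depending on $\alpha$ and $\ell$) in which every hitting set for long holes has size at least $n$, yet every packing of long holes with per-vertex multiplicity at most $\alpha$ has size bounded by a constant independent of $n$. Such a family $\{G_n\}_{n \in \N}$ immediately defeats any proposed bound $f(n)$.

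A natural first attempt is the generalized theta graph $\Theta_n$, obtained by taking two apex vertices $u,v$ joined by $n$ internally-disjoint paths of length $\lceil \ell/2 \rceil$. Every long hole in $\Theta_n$ has the form $P_i \cup P_j$, hence passes through both $u$ and $v$; this forces the $\alpha$-overlap packing number of long holes in $\Theta_n$ to be at most $\alpha$. However, $\{u,v\}$ is a hitting set of size $2$, so this construction fails to grow the cover.

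To amplify the cover while preserving the bound on the $\alpha$-overlap packing, the plan is to replace $u$ and $v$ by gadgets modeled on a combinatorial set system $\mathcal{H}_n$ satisfying (i) any $\alpha + 1$ hyperedges share a common point, and (ii) the hitting number of $\mathcal{H}_n$ grows with $n$. Projective planes realize such a system for $\alpha = 1$, and iterated design-theoretic constructions extend this to arbitrary $\alpha$. Each hyperedge of $\mathcal{H}_n$ is then realized as an induced long cycle in $G_n$ by joining its point-vertices along subdivided paths of length at least $\lceil \ell/2 \rceil$, so that every intended cycle has length at least $\ell$, and the intersection pattern of cycles in $G_n$ mirrors that of blocks in $\mathcal{H}_n$.

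The main obstacle is verifying that $G_n$ contains no \emph{unintended} induced cycles of length at least $\ell$: spurious long holes formed by combining fragments of different realized hyperedges could increase the packing number and destroy the counterexample. Ruling these out will require careful choice of the subdivision lengths, of the gluing between gadgets, and a case analysis of short alternative routes between distinct blocks. Once verified, the hitting and $\alpha$-overlap packing numbers of long holes in $G_n$ inherit those of $\mathcal{H}_n$, establishing the failure of the $\frac{1}{\alpha}$-integral induced \EP{} property for every $\alpha \in \N$ and $\ell \geq 5$.
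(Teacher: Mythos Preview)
The paper does not contain a proof of this theorem: it is quoted as a result of Kim and Kwon~\cite{KK20} and used only as motivation (see the sentence immediately following the statement, and the later remark that ``the examples from Theorem~\ref{counterexamples} have large complete bipartite graphs as induced subgraphs''). So there is no proof in the present paper to compare your proposal against.

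As for your proposal itself, it is an outline rather than a proof. Two substantive gaps remain. First, the hypergraph you need for general $\alpha$ is left unspecified: you assert that ``iterated design-theoretic constructions'' produce, for every $\alpha$, a set system in which any $\alpha+1$ blocks have a common point while the cover number is unbounded, but you neither construct such a system nor cite one, and this property is genuinely restrictive for $\alpha\ge 2$. (In fact the theorem only requires that the $\tfrac{1}{\alpha}$-integral packing number be bounded by \emph{some} constant, not by $\alpha$; insisting that every $\alpha+1$ blocks meet may be asking for more than is achievable or necessary.) Second, and more seriously, you explicitly flag but do not resolve the main difficulty: after realising each block as a long induced cycle and gluing at the shared point-vertices, you must rule out all unintended long holes in $G_n$. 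Without that verification you cannot conclude that the packing number of long holes in $G_n$ equals the $\alpha$-matching number of $\mathcal H_n$, and the argument collapses.

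The hint in the paper suggests that the actual construction in~\cite{KK20} proceeds via large induced complete bipartite subgraphs rather than projective-plane incidence structures; that route tends to make the ``no unintended holes'' verification more tractable, since the chord structure of $K_{s,t}$ is explicit. If you want to complete a proof along your lines, you should either consult~\cite{KK20} directly or replace the design-theoretic gadget with a bipartite one and carry out the case analysis you defer.
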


Theorem~\ref{counterexamples} is rather surprising, since there are many objects that do not have the \EP{} property but do have the $\frac{1}{2}$-integral \EP{} property.  For example, odd cycles do not have the \EP{} property~\cite{DN88} but they do have the $\frac{1}{2}$-integral \EP{} property~\cite{Reed99}.  An important observation is that the examples from Theorem~\ref{counterexamples} have large complete bipartite graphs as induced subgraphs.  Therefore, Kim and Kwon asked whether the induced \EP{} property holds for long cycles, when we restrict to $C_4$-free graphs.  

\begin{question}[Kim and Kwon~\cite{KK20}] \label{ques:EPlong}
For fixed $\ell \geq 6$,  does there exist a function $f: \N \to \R$ such that for every $C_4$-free graph $G$ and every $k \in \N$, $G$ either contains $k$ vertex-disjoint holes of length at least $\ell$, or a set $X$ of at most $f(k)$ vertices such that $G-X$ has no hole of length at least $\ell$?
\end{question}

The main result of this paper is that Question~\ref{ques:EPlong} is true for $\ell=6$.

\begin{theorem} \label{thm:main}
There exists a function $f(k)=\mathcal{O}(k^2 \log k)$ such that for every $C_4$-free graph $G$ and every $k \in \N$, $G$ either contains $k$ vertex-disjoint holes of length at least $6$, or a set $X$ of at most $f(k)$ vertices such that $G-X$ has no hole of length at least $6$.  Moreover, there is a polynomial-time algorithm that, given a $C_4$-free graph $G$ and a positive integer $k$, either finds $k$ vertex-disjoint holes of length at least $6$ in $G$, or a set $X$ of at most $\mathcal{O}(k^2 \log k)$ vertices such that $G-X$ has no hole of length at least $6$.
\end{theorem}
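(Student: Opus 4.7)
The plan is to reduce Theorem~\ref{thm:main} to Theorem~\ref{kimkwon}. Given a $C_4$-free graph $G$, first apply Theorem~\ref{kimkwon} with parameter $2k$. Either the algorithm returns a hitting set $X$ with $|X| = \mathcal{O}(k^2 \log k)$ for all holes of $G$, in which case $X$ trivially hits every hole of length at least $6$ and we are done; or it returns $2k$ pairwise vertex-disjoint holes $H_1,\dots,H_{2k}$. Since $G$ is $C_4$-free, each $H_i$ has length at least $5$, so if at least $k$ of them have length at least $6$ we are done. Otherwise more than $k$ of them are induced $C_5$'s, and the remaining task is to turn many pairwise vertex-disjoint induced $C_5$'s into $k$ pairwise vertex-disjoint holes of length at least $6$.

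The heart of the argument will be a structural lemma exploiting $C_4$-freeness. For any induced $C_5$ $C = v_1 v_2 v_3 v_4 v_5$ in $G$ and any $w \notin V(C)$, the set $N(w) \cap V(C)$ must consist of at most three consecutive vertices of $C$: otherwise one quickly exhibits an induced $C_4$ using $w$ and the vertices between two non-consecutive neighbors. Crucially, when $|N(w) \cap V(C)| = 2$, say $N(w) \cap V(C) = \{v_1, v_2\}$, the cycle $w v_2 v_3 v_4 v_5 v_1 w$ is an induced $C_6$. Thus every ``length-$2$ attachment'' to a $C_5$ in our packing automatically upgrades it to a $C_6$. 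To exploit this globally, I would build an auxiliary bipartite graph $B$ whose parts are the $C_5$'s in our packing and the candidate length-$2$-attachment vertices, with edges indicating valid attachments. Either $B$ contains a matching saturating $k$ of the $C_5$'s, yielding $k$ pairwise vertex-disjoint $C_6$'s; or, by K\H{o}nig's theorem, $B$ admits a vertex cover of size less than $k$, whose attachment-vertex side can be added to the hitting set at additive cost $\mathcal{O}(k)$. The $C_5$'s not saturated by the matching must then be argued to lie on no long hole of the modified graph: any such hole would enter and exit a $C_5$ through outside vertices, which in a $C_4$-free graph would force a length-$2$ attachment that we have either used or removed.

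The main obstacle is twofold. First, a vertex $w$ with $|N(w) \cap V(C)| = 3$ (a length-$3$ attachment) does not directly produce an induced $C_6$; one needs to show that such a configuration either contains a disjoint length-$2$ attachment that we can exploit, or already contains a $C_6$ in the neighborhood of $C$, or can be safely absorbed into the hitting set. Second, the claim that an uncovered $C_5$ participates in no long hole must be justified by a careful case analysis of how a long hole in a $C_4$-free graph can intersect an induced $C_5$, repeatedly using the observation that each outside hole-vertex has a restricted neighborhood on $C$. Assuming these lemmas, the overall bound $\mathcal{O}(k^2 \log k)$ is preserved because the contribution from Theorem~\ref{kimkwon} dominates and the bipartite-cover steps contribute only an additive $\mathcal{O}(k)$. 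The polynomial-time algorithm follows by combining the algorithmic form of Theorem~\ref{kimkwon} with a polynomial-time matching/vertex-cover computation in $B$.
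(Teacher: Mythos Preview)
Your reduction has a genuine gap: obtaining $2k$ pairwise disjoint holes from Theorem~\ref{kimkwon}, most of which are $C_5$'s, gives you no control over long holes that are \emph{disjoint} from those $C_5$'s. Concretely, let $G$ be the disjoint union of $3k$ copies of $C_5$ and $3k$ copies of $C_7$. This graph is $C_4$-free, and the Kim--Kwon algorithm with parameter $2k$ may well return $2k$ of the $C_5$ components (nothing in Theorem~\ref{kimkwon} lets you choose which holes are output). Your bipartite graph $B$ then has no edges, the vertex cover is empty, and you correctly observe that none of these $C_5$'s lies on a long hole---but you have neither packed nor hit any of the $C_7$'s. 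More generally, even when your matching argument succeeds for long holes that enter and exit a packed $C_5$, it says nothing about long holes living entirely in $G-\bigcup_i V(C_5^{(i)})$. There is also no obvious iteration that rescues this: repeatedly calling Kim--Kwon and peeling off $C_5$'s can be forced to run for unboundedly many rounds by adding more $C_5$ components, so the cumulative number of removed vertices is unbounded.

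Note that the paper's logical direction is the reverse of yours: Theorem~\ref{thm:main} is the harder statement, and Theorem~\ref{kimkwon} is derived \emph{from} it (greedily pack $C_4$'s and $C_5$'s, then apply Theorem~\ref{thm:main} to the remainder). The paper's proof of Theorem~\ref{thm:main} does not invoke Theorem~\ref{kimkwon}; instead it reduces to Theorem~\ref{thm:main2} about a $C_4$-free graph together with a shortest long hole $C$ such that $G-V(C)$ has no long hole, and then performs a structural analysis relative to $C$. A secondary issue with your local claim: a long hole can meet an induced $C_5$ in a single vertex $v$ with both $H$-neighbours of $v$ having only $v$ as their neighbour on the $C_5$, so no length-$2$ attachment is forced; and your ``at most three consecutive neighbours'' statement omits the case where $w$ is adjacent to all five vertices of the $C_5$, which $C_4$-freeness does not exclude.
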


Note that Theorem~\ref{thm:main} implies Theorem~\ref{kimkwon} as follows.  We claim that we may take $g(k)=5(k-1)+f(k)$, where $f(k)$ is the function from Theorem~\ref{thm:main}.  Let $G$ be a graph and $k \in \N$.  Let $\mathcal U$ be a collection of vertex-disjoint holes of $G$, where each $H \in \mathcal U$ has length $4$ or $5$, and $| \mathcal U|$ is maximum with this property.  If $|\mathcal U| \geq k$, then we are done.  Otherwise, let $X$ be the set of vertices in $G$ covered by the holes in $\mathcal U$.  Since $G - X$ is $C_4$-free, by Theorem~\ref{thm:main}, $G-X$ either contains $k$ vertex-disjoint holes of length at least $6$, or a set $X'$ of at most $f(k)$ vertices such that $(G-X)-X'$ has no hole of length at least $6$. If the former holds, we are done.  If the latter holds, then $G-(X \cup X')$ has no holes and $|X \cup X'| \leq 5(k-1)+f(k)$. 

For brevity, we call a hole of length at least $6$ a \emph{long hole}.
Our proof of Theorem~\ref{thm:main} follows the same general strategy as~\cite{KK20}, but with several simplifications. Indeed, even though we need to deal with several additional technical difficulties in finding \emph{long} holes, our proof is still shorter than the proof for holes.  If one only cares about Theorem~\ref{kimkwon}, it is easy to rewrite our proof to obtain an extremely succinct proof for holes.


We begin by reducing Theorem~\ref{thm:main} to the case where there is a given shortest long hole $C$ such that $G-V(C)$ has no long hole and $C$ has $c'k\log k$ vertices for some constant $c'$.
We show that every vertex in $G-V(C)$ either has at most three neighbors that are consecutive on $C$, or it `almost' dominates $C$. 
We let $D$ be the set of vertices that almost dominate $C$. These vertices play a similar role as the \emph{$C$-dominating vertices} in \cite{KK20}. As in~\cite{KK20}, we can show that $D$ is a clique.

In~\cite{KK20}, holes are divided into four different types, depending on whether they intersect $D$ or not, and whether they are contained in the closed neighborhood of $C$ or not. For each of these four types, Kim and Kwon~\cite{KK20} find either $k$ vertex-disjoint holes of that type, or a small vertex set hitting all holes of that type. In our work, we unify and simplify many of the steps in their proof. In particular, we do not separately consider long holes all of whose vertices are close to $C$. We simply distinguish long holes depending on whether they intersect $D$ or not.  For each of these \emph{two} types, we find either $k$ vertex-disjoint long holes of that type, or a small vertex set hitting all holes of that type.

\subsection*{Paper Outline} In Section~\ref{sec:prelim}, we introduce some basic definitions. We reduce Theorem~\ref{thm:main} to the restricted setting discussed above (Theorem~\ref{thm:main2}) in Section~\ref{sec:overview}, and show that our main result follows from Theorem~\ref{thm:main2}.  The rest of the paper is dedicated to proving Theorem~\ref{thm:main2}.  In Section~\ref{sec:structure}, we introduce almost $C$-dominating vertices, and prove some structural lemmas relative to the cycle $C$. In Section~\ref{sec:greedy}, we find a greedy packing of long holes contained in the union of $D$ and the third closed neighborhood of $C$ in $G-D$. We then find a hitting set of size $\mathcal{O}(k\log k)$ for such holes if the greedy packing does not find $k$ vertex-disjoint long holes. In Section~\ref{sec:eardecomposition}, we find a `sparse' ear decomposition $\h$ of $G$. We argue that the number of branching points (vertices of degree $3$) of $\h$ is  $\mathcal{O}(k\log k)$, otherwise we can find $k$ vertex-disjoint long holes. We remove all branching points of $\h$ together with some vertices of $C$ close to the branching points. We complete the proof of Theorem~\ref{thm:main2} in Section~\ref{sec:avoiding}. Long holes that intersect $D$ are handled in Subsection~\ref{subsec:avoding}, and long holes that avoid $D$ are handled in Subsection~\ref{subsec:traversing}.  We end the paper with some open problems in Section~\ref{sec:openproblems}.

\section{Basic Definitions} \label{sec:prelim}

All graphs in this paper are finite, undirected, and have no loops and parallel edges. 
Let $G$ and $H$ be graphs.  We denote by $V(G)$ and $E(G)$ the vertex set and the edge set of $G$, respectively.  We let $G \cup H$ and $G \cap H$ be the graphs with vertex set $V(G) \cup V(H)$ (respectively, $V(G) \cap V(H)$) and edge set $E(G) \cup E(H)$ (respectively, $E(G) \cap E(H)$).  If $H$ is a subgraph of $G$ and $S \subseteq V(G)$, we let $H-S$ be the graph
obtained from $H$ by removing $S \cap V(H)$ and all edges with at least one end in $S$.  We abbreviate $H-\{v\}$ as $H-v$, and we let $H[S]:=H - (V(G) \setminus S)$.  A subgraph of $G$ is \emph{induced} if it is equal to $G[S]$ for some $S\subseteq V(G)$.  We say that a set $S$ of vertices in $G$ is a \emph{clique} if every pair of vertices in $S$ is adjacent.  
 
We say that $u$ is a \emph{neighbor} of $v$ if $uv \in E(G)$.  
The \emph{open neighborhood} of $A \subseteq V(G)$ is the set of vertices in $V(G)\setminus A$ having a neighbor in $A$, and is denoted $N_G(A)$. 
The set $N_G[A]:=N_G(A)\cup A$ is the \emph{closed neighborhood} of $A$.  For $v \in V(G)$, we let $N_G(v):=N_G(\{v\})$ and $N_G[v]:=N_G[\{v\}]$. For a subgraph $H$ of $G$, we let $N_G(H):=N_G(V(H))$ and $N_G[H]:=N_G[V(H)]$. The \emph{degree} of $v$, denoted $\deg_G(v)$, is $|N_G(v)|$.

We let $\mathbb{N}$ be the set of positive integers, and for all $n\in \mathbb{N}$, let $[n]:=\{1, \dots, n\}$.   
The \emph{length} of a path in $G$ is its number of edges.
For two vertices $x$ and $y$ in $G$, an \emph{$(x,y)$-path} is a path in $G$ whose ends are $x$ and $y$. The \emph{distance} between $x$ and $y$ is the length of a shortest $(x,y)$-path, and is denoted $\dist_G(x,y)$. If there is no $(x,y)$-path in $G$, then $\dist_G(x,y):=\infty$. The distance between two vertex sets $X,Y\subseteq V(G)$, written as $\dist_G(X,Y)$, is the minimum $\dist_G(x,y)$ over all $x\in X$ and $y\in Y$. For $A \subseteq V(G)$ and $r \in \N$, we let $N_G^r[A]$ denote the set of all vertices $w$ such that $\dist_G(\{w\}, A)\le r$. We abbreviate $\dist_G(\{x\},Y)$ as $\dist_G(x,Y)$, and $N_G^r[\{x\}]$ as $N_G^r[x]$. For a subgraph $H$ of $G$, let $N_G^r[H]:=N_G^r[V(H)]$.

For a path $P$ and two vertices $x$ and $y$ in $P$, 
we denote by $xPy$ the subpath of $P$ from $x$ to $y$.
For two walks $P$ and $Q$ such that the last vertex of $P$ is the first vertex of $Q$, we let 
$PQ$ be the concatenation of $P$ and $Q$.  In the special case that $P$ and $Q$ are paths with $x,y \in V(P)$ and $y,z \in V(Q)$, we abbreviate $(xPy)(yQz)$ as $xPyQz$.
For $A \subseteq V(G)$, an \emph{$A$-path} is a path in $G$ such that 
its ends are in $A$, and all other vertices are contained in $V(G)\setminus A$. For a subgraph $H$ of $G$, a $V(H)$-path is simply called an \emph{$H$-path}.

We will need the following result of Simonovitz~\cite{Simonovits1967}, which is useful for finding many vertex-disjoint cycles in a graph of maximum degree $3$. We define $s_k$ for $k\in \mathbb{N}$ as
\begin{align*}
s_k=
\begin{cases}
4k(\log k + \log \log k +4) \quad &\text{if } k\geq 2\\
2 &\text{if } k=1.
\end{cases}
\end{align*}
\begin{theorem}[Simonovitz~\cite{Simonovits1967}]\label{thm:simonovitz}
Let $G$ be a graph  all of whose vertices have degree $3$ and let $k \in \N$. If $\abs{V(G)}\geq s_k$, then $G$ contains $k$ vertex-disjoint cycles. Moreover, these $k$ cycles can be found in polynomial time.
\end{theorem}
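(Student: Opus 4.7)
The plan is to follow the classical Moore-bound-plus-suppression argument. In any cubic graph on $m$ vertices, a breadth-first search from any vertex reaches at most $3\cdot 2^r - 2$ vertices within distance $r$, so the girth is at most $2\log_2 m + O(1)$. Consequently, a shortest cycle $C$ (which must be induced) has length $|V(C)| \leq 2\log_2 m + O(1)$, and such a cycle can be found in polynomial time by running BFS from each vertex and keeping the smallest cycle discovered.

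I would then iterate. After extracting $C$, delete $V(C)$ from the graph. Because $C$ is induced and $G$ is cubic, each vertex of $C$ has exactly one neighbor outside $V(C)$, so $G - V(C)$ has at most $|V(C)|$ vertices of degree $2$, with the rest still of degree $3$. Suppressing each degree-$2$ vertex (that is, replacing each maximal path of such vertices by a single edge joining its two degree-$3$ endpoints) produces a cubic multigraph $G'$ with $|V(G')| \geq m - 2|V(C)|$. Any cycle in $G'$ pulls back to a cycle in $G - V(C)$, so applying this extraction-plus-suppression step $k$ times yields $k$ pairwise vertex-disjoint cycles in the original graph, provided each intermediate multigraph still contains at least one cycle.

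Setting $m_1 := n$ and $m_{i+1} \geq m_i - 4\log_2 m_i - O(1)$, the plan is to show that the hypothesis $n \geq s_k$ forces $m_k > 0$ (in fact $m_k \geq 2$, which suffices in a cubic multigraph). As long as $m_i \geq n/2$ throughout, each step costs at most $4\log_2 n + O(1)$ vertices, so after $k$ steps we have used at most $4k\log_2 n + O(k)$ vertices; substituting $n = \Theta(k \log k)$ yields $4k(\log_2 k + \log_2 \log_2 k + O(1))$, matching $s_k$ up to the additive constant $4$. The main obstacle is tuning the arithmetic to hit the explicit constants, and in particular verifying that the invariant $m_i \geq n/2$ is preserved. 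A minor technical issue is that suppression may create multi-edges or loops in the intermediate multigraph, but a $2$-cycle or loop in $G'$ always lifts through the suppressed paths to a genuine cycle of length at least $3$ in the simple graph $G$, so these are harmless. The polynomial-time claim follows because each of the $k$ iterations consists of one BFS-based shortest-cycle computation followed by a linear-time suppression step.
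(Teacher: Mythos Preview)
The paper does not prove this theorem; it is quoted from Simonovits's original paper and used as a black box, so there is no in-paper argument to compare your proposal against.

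That said, your plan is the classical argument and is sound. Two small technical points to tighten. First, when $|V(C)|\le 4$ a vertex outside $C$ can have two or three neighbours on $C$, so deleting $V(C)$ may leave vertices of degree $1$ or $0$, not only degree $2$; you must iteratively strip those as well. The cleanest bookkeeping is via the excess $|E|-|V|$: deleting $V(C)$ leaves excess $m/2-|V(C)|$, stripping vertices of degree at most $1$ and suppressing degree-$2$ vertices never decrease the excess, and a cubic multigraph $G'$ has excess $|V(G')|/2$, so your inequality $|V(G')|\ge m-2|V(C)|$ follows without the extra assumption on outside degrees. Second, the reduction may produce a connected component consisting entirely of degree-$2$ vertices; such a component is already a cycle and can be harvested directly rather than suppressed. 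With these adjustments your recursion and the arithmetic you sketched go through (read the $\log$'s in $s_k$ as base~$2$).
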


\section{An Equivalent Formulation}\label{sec:overview}
Recall that a hole of $G$ is \emph{long} if it has length at least $6$. 
In this section, we show that to prove Theorem~\ref{thm:main}, it suffices to prove the following theorem.


\begin{theorem}\label{thm:main2}
Let $G$ be a $C_4$-free graph, $k \in \N$, and $C$ be a cycle in $G$ such that $C$ is a shortest long hole of $G$, $|V(C)| > \mu_k:=88575k+24003s_k$, and $G-V(C)$ does not contain a long hole.  Then there exists a polynomial-time algorithm that finds $k$ vertex-disjoint long holes of $G$ or a set $X_{\ref{thm:main2}}$ of at most $\mu_k$ vertices in $G$ such that $G-X_{\ref{thm:main2}}$ has no long hole.  
\end{theorem}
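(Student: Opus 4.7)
The plan is to follow the four-phase roadmap sketched in the introduction, aiming at all times to either exhibit $k$ vertex-disjoint long holes or to build up a hitting set whose size stays within the budget $\mu_k$. The cycle $C$ acts as a backbone: since $C$ is a shortest long hole, chords and short detours through $V(G) \setminus V(C)$ are severely constrained, and this is what we exploit throughout.

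First, I would establish the structural dichotomy for vertices off $C$. Because $G$ is $C_4$-free and $C$ is a \emph{shortest} long hole, for any $v \in V(G) \setminus V(C)$ the neighbors of $v$ on $C$ must be sparsely spaced (two neighbors at distance $2$ on $C$ would create a $C_4$ together with $C$; two neighbors at distance $\geq 3$ along $C$ would yield a shorter long hole via $v$). A short case analysis should show that either $v$ has at most three neighbors on $C$, all consecutive, or $v$ ``almost'' dominates $C$ in a precise sense. Collect the almost-dominators into the set $D$, and verify $D$ is a clique: two non-adjacent almost-dominators would produce a $C_4$ using two common neighbors on $C$. This is the analogue of the $C$-dominators of Kim--Kwon, and these structural facts feed everything that follows.

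Next, I would handle long holes that live ``near'' $C$, namely those contained in $D \cup N^3_{G-D}[C]$. Here I run a greedy packing: repeatedly pick a long hole $H$ of this type, remove $V(H)$ together with a bounded-radius ball around it from $C$, and iterate. If this yields $k$ disjoint long holes we are done; otherwise the union of the removed zones is an $\mathcal{O}(k \log k)$ hitting set for all near-$C$ long holes. The factor $\log k$ comes from the Kim--Kwon-style combinatorics on how much of $C$ a single hole can consume, combined with the fact that $|V(C)| > \mu_k$ gives us plenty of room on $C$. After this phase, we may assume no long hole lies entirely in $D \cup N^3_{G-D}[C]$.

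The third phase deals with long holes that use vertices far from $C$. Build an ear decomposition $\mathcal{H}$ of (the relevant part of) $G$ starting from $C$, choosing each ear as short as possible. The branching points of $\mathcal{H}$ (degree-$3$ vertices in the ear graph) carry the ``topology'' of where long holes can sit: contracting ears and applying Simonovits's Theorem~\ref{thm:simonovitz} to the resulting cubic structure shows that if $\mathcal{H}$ has more than $\mathcal{O}(k \log k)$ branching points, we already find $k$ vertex-disjoint long holes. Otherwise we can delete all branching points plus short segments of $C$ around their attachment feet, at a total cost of $\mathcal{O}(k \log k)$ vertices; this is where the large constants in $\mu_k$ are absorbed.

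Finally, in the remaining graph any long hole either meets $D$ or avoids $D$. For holes meeting $D$: since $D$ is a clique and $G$ is $C_4$-free, at most two vertices of any hole lie in $D$, and the two non-$D$ arcs of such a hole must each enter $N_G[C]$ in a controlled way; one shows that $\mathcal{O}(k)$ additional vertices suffice to kill all such holes or find $k$ disjoint ones. For holes avoiding $D$: every non-$C$ vertex now has at most three consecutive neighbors on $C$, so any long hole $H$ must traverse ears of $\mathcal{H}$ in a restricted pattern relative to the shortened $C$; combined with the bound on branching points, this lets us either extract $k$ disjoint long holes directly or conclude the remaining graph has no long hole at all.

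The main obstacle I expect is calibrating the radii, ear lengths, and $C$-gaps so that the structural lemmas compose cleanly across the four phases; in particular, in the greedy near-$C$ phase one must ensure that the hole removed at each step ``uses up'' a controllable length of $C$, and in the branching-point phase one must verify that deleting branching points does not destroy an essential long hole whose existence we rely on to apply Simonovits. These bookkeeping steps are where the constants $88575$ and $24003$ come from, and they are the only place real calculation is unavoidable.
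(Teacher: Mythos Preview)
Your four-phase outline is exactly the paper's strategy, and the order (structure of neighbours on $C$ and the clique $D$; first greedy packing near $C$; sparse ear decomposition with Simonovits bounding the branching points; then split into $D$-traversing versus $D$-avoiding holes) matches the paper's Sections~4--7. So at the level of a roadmap you are on target.

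Two points in your description would not go through as stated and are worth correcting before you try to fill in details. First, your argument that $D$ is a clique (``two non-adjacent almost-dominators would produce a $C_4$ using two common neighbors on $C$'') does not work: almost-dominators need not share two neighbours on $C$, since their neighbours sit at distances $1$ or $3$ along $C$. The paper instead picks three well-separated length-$2$ arcs of $C$, observes that $a$ and $b$ each hit every arc, and uses two of the arcs to build a hole of length between $6$ and $8$, contradicting that $C$ is shortest. Second, your phase-2 claim that after the first greedy packing ``no long hole lies entirely in $D\cup N^3_{G-D}[C]$'' is too strong, and this is precisely where most of the work hides. The first greedy packing only kills long holes living over a \emph{single short window} of $C$ (at most $60$ consecutive vertices); Lemma~5.2 then upgrades this to arbitrary subpaths of $C-X_{\ref{lem:greedypacking}}$, but long holes that straddle the deleted pieces of $C$ survive and are only eliminated much later, in Proposition~\ref{prop:Davoid}, after the ear decomposition and the appendage step are already in place. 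Relatedly, the $\log k$ in the budget does not come from phase~2 at all (that step costs $\mathcal{O}(k)$); it enters only through $s_k$ in the Simonovits bound on branching points, and the $24003s_k$ term is then amplified by the factor $380$ coming from Proposition~\ref{prop:Davoid}. You should also not omit the appendage step (cycles meeting $C$ in a single non-branching vertex): hitting these is what makes the key ``no long escaping path'' Lemma~\ref{lem:longout} go through, and both the $D$-avoiding and the $D$-traversing analyses rely on it.
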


We first need an algorithm to detect a shortest long hole, if one exists.


\begin{lemma} \label{lem:detecting}
Let $G$ be a graph, $Q$ be a $(u, v)$-induced path in $G$ of length at least $4$ for some $u,v\in V(G)$, and $P$ be a shortest $(u, v)$-path in $G-(N_G[V(Q) \setminus \{u, v\}] \setminus \{u, v\})$.  Then $P \cup Q$ is a long hole.  Moreover, among all long holes of $G$ containing $Q$, $P \cup Q$ is shortest.
\end{lemma}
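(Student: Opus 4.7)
The plan is to set $G' := G - (N_G[V(Q) \setminus \{u,v\}] \setminus \{u,v\})$, so that $P$ is a shortest $(u,v)$-path in $G'$, and then to prove the two assertions in turn.

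For the first assertion, I would show that $C' := P \cup Q$ is an induced cycle of length at least $6$. Since $Q$ is induced and has length at least $4$, its endpoints $u$ and $v$ are non-adjacent in $G$, so $P$ has length at least $2$. The interior of $P$ lies in $G'$ and hence avoids $V(Q)$, so $V(P) \cap V(Q) = \{u,v\}$ and $C'$ is a cycle of length at least $6$. Chordlessness is a routine case check: chords within $Q$ are forbidden by $Q$ being induced; chords within $P$ by $P$ being a shortest path in $G'$; chords between an interior vertex of $P$ and an interior vertex of $Q$ by the definition of $G'$ (the interior of $P$ is disjoint from $N_G[V(Q) \setminus \{u,v\}]$); and chords between $\{u,v\}$ and interior vertices of the other path by the inducedness or shortest-path property of that path.

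For minimality, let $H$ be any long hole of $G$ containing $Q$, and write $H = Q \cup R$ where $R$ is a $(u,v)$-path with $V(R) \cap V(Q) = \{u,v\}$. The key claim is that $R$ is a path in $G'$: once this is established, $|E(P)| \leq |E(R)|$ by the choice of $P$, so $|E(C')| \leq |E(H)|$. To prove the claim, suppose some interior vertex $w$ of $R$ lay in $N_G[V(Q) \setminus \{u,v\}]$. Then $w$ would be adjacent in $G$ to some interior vertex $q$ of $Q$; but $w$ and $q$ are both vertices of the induced cycle $H$, so the edge $wq$ would force $q$ to be one of the two $H$-neighbors of $w$, both of which lie on $R$, contradicting $q \in V(Q) \setminus \{u,v\}$. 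This last step is the only one with real content; it uses essentially that $H$ is an induced cycle, and everything else is bookkeeping about which chords can occur.
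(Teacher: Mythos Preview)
Your proposal is correct and follows essentially the same approach as the paper: both argue that $P$ is induced (as a shortest path), that $u$ and $v$ are nonadjacent so $P$ has length at least $2$, that no edges run between the interiors of $P$ and $Q$ by the definition of $G'$, and that for any long hole $H \supseteq Q$ the complementary arc $R = H - (V(Q)\setminus\{u,v\})$ is a $(u,v)$-path in $G'$. Your write-up simply spells out the chord case-check and the ``$R$ lives in $G'$'' argument in more detail than the paper does; note that your fourth chord case (between $\{u,v\}$ and interior vertices of the other path) is already subsumed by your first two cases, since $u,v$ lie on both $P$ and $Q$.
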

\begin{proof}
Since $P$ is a shortest path, it is induced.  Moreover, since $u$ and $v$ are not adjacent, $P$ has length at least $2$, and hence $P \cup Q$ has length at least $6$.  Since there is no edge from $V(Q) \setminus \{u, v\}$ to $V(P) \setminus \{u,v\}$ in $G$, $P \cup Q$ is a long hole of $G$ containing $Q$. To see that $P \cup Q$ is a shortest such hole, observe that if $H$ is any hole of $G$ containing $Q$, then $H - (V(Q) \setminus \{u, v\})$ is a $(u, v)$-path in $G-(N_G[V(Q) \setminus \{u, v\}] \setminus \{u, v\})$.
\end{proof}

\begin{lemma} \label{lem:shortest}
Given a graph $G$, a shortest long hole of $G$ (if one exists) can be found in time $\mathcal{O}(\abs{V(G)}^7)$.
\end{lemma}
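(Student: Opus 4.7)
The plan is to invoke Lemma~\ref{lem:detecting} after a direct enumeration over short induced subpaths that must appear in any long hole. The key observation is that every long hole of $G$ has length at least $6$, so any five consecutive vertices along it induce an induced path of length $4$ (i.e., an induced $P_5$) in $G$. Thus every long hole of $G$ contains some induced $P_5$ of $G$ as a subgraph, and conversely Lemma~\ref{lem:detecting} tells us how to optimally ``close up'' any such $P_5$ into a shortest long hole containing it, whenever this is possible.

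Concretely, I would proceed as follows. Enumerate all ordered $5$-tuples $(v_1, v_2, v_3, v_4, v_5)$ of distinct vertices of $G$ and, for each, check in $\mathcal{O}(1)$ time via adjacency queries whether $Q := v_1 v_2 v_3 v_4 v_5$ is an induced path of length $4$. For each such $Q$, set $u := v_1$, $v := v_5$, construct the subgraph $G' := G - (N_G[V(Q) \setminus \{u, v\}] \setminus \{u, v\})$, and run a breadth-first search from $u$ in $G'$ to compute a shortest $(u,v)$-path $P$. By Lemma~\ref{lem:detecting}, whenever such a $P$ exists, $P \cup Q$ is a shortest long hole of $G$ containing $Q$. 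Return the shortest long hole obtained across all iterations, or report that no long hole exists if no iteration produces one.

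Correctness is immediate: if $H$ is a globally shortest long hole, then $H$ contains some induced $P_5$ $Q$ of $G$, and the iteration for this $Q$ (which will indeed be enumerated) returns, by Lemma~\ref{lem:detecting}, a long hole of length at most $|V(H)|$, hence a shortest long hole. For the running time, there are $\mathcal{O}(|V(G)|^5)$ tuples, and each iteration costs $\mathcal{O}(|V(G)|^2)$ for building $G'$ (removing $\mathcal{O}(|V(G)|)$ vertices together with their incident edges) and running BFS, for a total of $\mathcal{O}(|V(G)|^7)$.

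I do not expect any serious obstacle here: Lemma~\ref{lem:detecting} already encapsulates the nontrivial structural content, and the algorithmic task reduces to recognizing that (i) a $5$-vertex induced subpath is a small enough ``seed'' to enumerate within the claimed time budget, and (ii) such a seed is guaranteed to lie inside any long hole. The only points requiring minor care are confirming that $G'$ can be built and searched in $\mathcal{O}(|V(G)|^2)$ time per iteration and that the enumeration indeed reaches a $P_5$ contained in a shortest long hole.
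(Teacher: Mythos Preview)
Your proposal is correct and is essentially identical to the paper's own proof: both enumerate ordered $5$-tuples forming an induced path, apply Lemma~\ref{lem:detecting} to find a shortest long hole through each such path via a shortest-path computation in the appropriate auxiliary graph, and return the minimum over all tuples, yielding the $\mathcal{O}(|V(G)|^7)$ bound.
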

\begin{proof}
We guess a $5$-tuple of vertices $\mathcal{X}:=(x_1, x_2, x_3, x_4, x_5)$ that form an induced path $x_1x_2x_3x_4x_5$, 
and find a shortest $(x_5,x_1)$-path $P_{\mathcal{X}}$ in $G-(N_G[\{x_2, x_3, x_4\}] \setminus \{x_1, x_5\})$ (if one exists).
By Lemma~\ref{lem:detecting}, this process finds the shortest long hole of $G$ containing the path $x_1x_2x_3x_4x_5$ (if such a hole exists).  
Therefore, if $P_{\mathcal X}$ does not exist for all $\mathcal X$, then $G$ does not contain a long hole.  Otherwise, if we choose $\mathcal{X}=(x_1, x_2, x_3, x_4, x_5)$, such that $|P_\mathcal{X}|$ is minimum, then $x_5 P_{\mathcal X}x_1x_2x_3x_4x_5$ is a shortest long hole of $G$.  Since shortest paths can be computed in time $\mathcal{O}(|V(G)|^2)$, the above algorithm runs in time $\mathcal{O}(\abs{V(G)}^7)$.
\end{proof}

We are now ready to prove Theorem~\ref{thm:main} assuming that Theorem~\ref{thm:main2} holds. This argument has been repeatedly used in various Erd\H{o}s-P\'osa type results. 

\begin{proof}[Proof of Theorem~\ref{thm:main}]
    By repeatedly applying Lemma~\ref{lem:shortest}, we can construct a sequence of graphs $G_1,\ldots, G_{\ell+1}$ and a sequence of cycles $C_1,\ldots , C_{\ell}$ in polynomial time 
such that 
\begin{itemize}
\item $G_1=G$, 
\item for each $i\in [\ell]$, $C_i$ is a shortest long hole of $G_i$,  
\item for each $i\in [\ell]$, $G_{i+1}=G_i-V(C_i)$, and
\item $G_{\ell+1}$ has no long holes.
\end{itemize}
If $\ell \geq k$, then we found a  packing of $k$ long holes. Thus, we may assume that $\ell \leq k-1$. The following claim  will complete the proof.
\begin{claim*}
For each $j \in [\ell+1]$, we can find in polynomial time either $k$ vertex-disjoint long holes of $G$, or 
a vertex set $X_{j}$ of $G_{j}$ of size at most $(\ell+1-j)\mu_k$ so that $G_j-X_j$ has no long holes. 
\end{claim*}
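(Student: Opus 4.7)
The plan is to prove the claim by downward induction on $j$, from $j = \ell + 1$ to $j = 1$. The base case is immediate: by construction $G_{\ell+1}$ has no long hole, so taking $X_{\ell+1} = \emptyset$ works since $(\ell+1-(\ell+1))\mu_k = 0$.

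For the inductive step, suppose we have already constructed (in polynomial time) a set $X_{j+1} \subseteq V(G_{j+1})$ of size at most $(\ell - j)\mu_k$ such that $G_{j+1} - X_{j+1}$ has no long holes (or we have already found $k$ vertex-disjoint long holes and are done). Consider the graph $G_j - X_{j+1}$. Note that $X_{j+1} \subseteq V(G_{j+1}) = V(G_j) \setminus V(C_j)$, so $X_{j+1}$ is disjoint from $V(C_j)$ and $C_j$ is still a cycle in $G_j - X_{j+1}$. Moreover, $G_j - X_{j+1}$ is an induced subgraph of $G$ and hence $C_4$-free, and any long hole in $G_j - X_{j+1}$ is a long hole in $G_j$, so has length at least $|V(C_j)|$. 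Hence $C_j$ is a shortest long hole of $G_j - X_{j+1}$. Finally, $(G_j - X_{j+1}) - V(C_j) = G_{j+1} - X_{j+1}$, which has no long hole by the inductive hypothesis.

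Now we split on the length of $C_j$. If $|V(C_j)| \leq \mu_k$, we set $X_j := X_{j+1} \cup V(C_j)$; then $G_j - X_j = G_{j+1} - X_{j+1}$ has no long hole, and $|X_j| \leq (\ell - j)\mu_k + \mu_k = (\ell+1-j)\mu_k$. If instead $|V(C_j)| > \mu_k$, then all the hypotheses of Theorem~\ref{thm:main2} are satisfied by $G_j - X_{j+1}$ with the cycle $C_j$, so the polynomial-time algorithm of that theorem either produces $k$ vertex-disjoint long holes in $G_j - X_{j+1}$ (and hence in $G$, completing the proof), or yields a set $Y$ of at most $\mu_k$ vertices of $G_j - X_{j+1}$ such that $(G_j - X_{j+1}) - Y$ has no long hole. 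In the latter case we set $X_j := X_{j+1} \cup Y$, which has size at most $(\ell + 1 - j)\mu_k$ and satisfies the required property.

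There is no real obstacle here — the argument is bookkeeping plus the key observation that a shortest long hole in $G_j$ remains a shortest long hole after deleting vertices disjoint from it. Applying the claim with $j = 1$ yields a set $X_1 \subseteq V(G)$ of size at most $\ell \cdot \mu_k \leq (k-1)\mu_k = \mathcal{O}(k^2 \log k)$ hitting every long hole of $G$, which together with the potential early termination (when some application produces $k$ vertex-disjoint long holes) establishes Theorem~\ref{thm:main}. Note that this reduction is the same standard ``iterate-and-recurse'' argument used in many \EP{}-type proofs and contributes only a factor of $k$ to the bound, which is why the $\mathcal{O}(k \log k)$ bound of Theorem~\ref{thm:main2} translates to the $\mathcal{O}(k^2 \log k)$ bound of Theorem~\ref{thm:main}.
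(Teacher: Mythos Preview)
Your proof is correct and follows essentially the same approach as the paper: downward induction on $j$, with base case $X_{\ell+1}=\emptyset$, and at each step either absorbing $V(C_j)$ when it is short or invoking Theorem~\ref{thm:main2} on $G_j - X_{j+1}$ when $|V(C_j)| > \mu_k$. You even supply a bit more justification than the paper does for why $C_j$ remains a shortest long hole in $G_j - X_{j+1}$ (namely, because $X_{j+1}\cap V(C_j)=\emptyset$ and induced cycles survive into induced supergraphs).
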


\begin{proof}[Subproof]
We proceed by reverse induction on $j$.  The base case of $j=\ell+1$ holds with $X_{\ell+1}=\emptyset$ since $G_{\ell+1}$ has no long holes.  Fix $j\leq \ell$.  By induction, there is a subset of vertices $X_{j+1}$ of $G_{j+1}$ of size at most $(\ell-j)\mu_k$ hitting all long holes of $G_{j+1}$.  Since $G_{j+1}=G_j-V(C_j)$, $C_{j}$ is a shortest long hole of $G_{j}-X_{j+1}$, and $\left( G_{j}-X_{j+1} \right)-V(C_{j})$ has no long holes.

If $C_{j}$ has length at most $\mu_k$, then we set $X_{j}:=X_{j+1}\cup V(C_{j})$. Clearly, $\abs{X_j}\leq (\ell+1-j)\mu_k$.
Otherwise, by applying Theorem~\ref{thm:main2} to $(G_{j}-X_{j+1}, k, C_{j})$, 
we can find in polynomial time either $k$ vertex-disjoint long holes of $G_{j}-X_{j+1}$, or a vertex set $X$ of size at most $\mu_k$ of $G_{j}-X_{j+1}$ hitting all long holes. 
In the former case, we output $k$ vertex-disjoint holes, and we are done. In the latter case, we set $X_{j}:=X_{j+1}\cup X$. Observe that $G_{j}- X_j$ has no long holes and $\abs{X_{j}}\le (\ell+1-j)\mu_k$ as claimed. 
\end{proof}
In particular, the set $X_1$ hits every long hole of $G_1=G$ and 
$$\abs{X_1}\le \ell\mu_k\le (k-1)\mu_k=\mathcal{O}(k^2\log k). \qedhere$$ 
\end{proof}

\section{Structure Relative to the Cycle $C$}\label{sec:structure}
For the remainder of the paper, we fix a $C_4$-free graph $G$, $k \in \N$, and a cycle $C$ in $G$ such that $C$ is the shortest long hole of $G$,  $|V(C)| > \mu_k$, and $G-V(C)$ does not contain a long hole.  Our goal is to prove Theorem~\ref{thm:main2}.  We begin by analyzing the structure of $G$ relative to $C$.

\subsection{Almost $C$-dominating vertices}

We say that a vertex $u \notin V(C)$ is \emph{almost $C$-dominating} if the neighbors of $u$ on $C$ can be enumerated $u_1, \dots, u_\ell$ (with $u_{\ell+1}=u_1$) such that $\ell \geq 2$ and $\dist_C(u_i, u_{i+1}) \in \{1,3\}$ for all $i \in [\ell]$.  
This definition is motivated by the following lemma.

\begin{lemma}\label{lem:consecutive}
For every vertex $v \in V(G)\setminus V(C)$, either $v$ is almost $C$-dominating or $\abs{N_G(v)\cap V(C)} \leq 3$ and the vertices in $N_G(v)\cap V(C)$ are consecutive along $C$.
\end{lemma}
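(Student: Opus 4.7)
The plan is to analyze the arcs of $C$ between consecutive neighbors of $v$, exploiting that $C$ is a hole, $G$ is $C_4$-free, and $C$ is a shortest long hole with $|V(C)| > \mu_k$. Set $N := N_G(v) \cap V(C)$ and $\ell := |N|$. If $\ell \leq 1$, the second alternative of the lemma holds trivially, so I assume $\ell \geq 2$ and enumerate $N = \{u_1,\dots,u_\ell\}$ cyclically along $C$ (with $u_{\ell+1}:=u_1$). For each $i \in [\ell]$, let $P_i$ be the subpath of $C$ from $u_i$ to $u_{i+1}$ whose interior contains no other $u_j$, and set $a_i := |E(P_i)|$, so $\sum_i a_i = |V(C)|$.

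For each $i$, I consider the closed walk $C_i$ obtained by appending the edges $vu_i$ and $vu_{i+1}$ to $P_i$. I first observe that $C_i$ is in fact an induced cycle of length $a_i + 1$: $P_i$ is chordless because $C$ is a hole, and by the choice of $u_i, u_{i+1}$ as neighbors of $v$ consecutive in $N$, $v$ has no neighbor in the interior of $P_i$. I then case on $a_i$. The value $a_i = 2$ makes $C_i$ an induced $C_4$, contradicting the $C_4$-freeness of $G$. The value $a_i = 4$ makes $C_i$ an induced $C_6$, which is a long hole of length $6 < |V(C)|$, violating the minimality of $C$. The value $a_i \geq 5$ produces a long hole $C_i$ of length $a_i + 1$, and minimality of $C$ forces $a_i + 1 \geq |V(C)|$. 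Hence each $a_i$ lies in $\{1,3\}$ or satisfies $a_i \geq |V(C)| - 1$.

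To finish, if some $a_i \geq |V(C)|-1$, the remaining arcs sum to at most $1$, forcing $\ell = 2$ and the other arc to have length $1$; then $u_1 u_2$ is an edge of $C$, so $N$ consists of two consecutive vertices of $C$ with $|N| = 2 \le 3$, and the second alternative of the lemma holds. Otherwise every $a_i \in \{1,3\}$, which is precisely the condition that $v$ is almost $C$-dominating. The main subtlety is verifying that each $C_i$ really is an induced cycle; this rests on $C$ being chordless together with the defining property of $P_i$, after which the remaining argument is a routine case analysis on $a_i$ that only uses $|V(C)| > \mu_k$ being sufficiently large (in particular $|V(C)| \geq 7$).
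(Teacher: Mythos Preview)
Your approach is essentially the same as the paper's: both analyze the arcs of $C$ between consecutive neighbors of $v$ and use $C_4$-freeness together with the minimality of $C$ to constrain the arc lengths.

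There is, however, an off-by-one slip. The cycle $C_i$ has length $a_i + 2$, not $a_i + 1$: the path $P_i$ contributes $a_i$ edges and you add two more ($vu_i$ and $vu_{i+1}$). Your specific cases $a_i = 2$ and $a_i = 4$ are computed correctly (you do get $C_4$ and $C_6$), so the slip only matters in the range $a_i \geq 5$, where the correct conclusion from minimality of $C$ is $a_i + 2 \geq |V(C)|$, i.e., $a_i \geq |V(C)| - 2$. In the final paragraph this means the remaining arcs sum to at most $2$ rather than $1$, so besides $\ell = 2$ you must also allow $\ell = 3$ with two remaining arcs each of length $1$; in that case $N$ consists of three consecutive vertices of $C$, which is again the second alternative of the lemma. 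With this small correction your argument is complete.
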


\begin{proof}
Towards a contradiction, let $v \in V(G)\setminus V(C)$ be a vertex for which neither outcome occurs. Since $v$ is not almost $C$-dominating and $G$ is $C_4$-free, there is a $(v_1, v_2)$-subpath $P$ of $C$ such that $v_1, v_2 \in N_G(v)\cap V(C)$, no internal vertex of $P$ is in $N_G(v)\cap V(C)$, and $|V(P)| \geq 5$.  If $|V(C)|-|V(P)|=0$, then the second outcome occurs, which contradicts that $v$ is a counterexample.  Suppose $V(C) \setminus V(P):=\{x\}$.  If $x \in N_G(v)$, then the second outcome occurs again, which is a contradiction.  If $x \notin N_G(v)$, then $vv_1xv_2v$ is an induced $C_4$ of $G$, which is also a contradiction. Thus, $|V(C)|-|V(P)| \geq 2$.  But now, $v_1Pv_2vv_1$ is a long hole of length strictly less than $|V(C)|$, contradicting the assumption that $C$ is a shortest long hole in $G$.
\end{proof}

We next show that the set of almost $C$-dominating vertices is a clique in $G$.

\begin{lemma}\label{lem:cdominating}
The set of almost $C$-dominating vertices is a clique.
\end{lemma}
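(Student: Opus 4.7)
The plan is to argue by contradiction: assume two almost $C$-dominating vertices $u$ and $v$ satisfy $uv \notin E(G)$, and produce a long hole of length exactly $6$, which contradicts the fact that $C$ is the shortest long hole in $G$ (recall $|V(C)| > \mu_k \gg 6$). The enabler will be the $C_4$-free hypothesis: two distinct common neighbors $w, w'$ of $u$ and $v$ on $V(C)$ would give an induced $C_4 = uwvw'u$, so $u$ and $v$ share at most one common neighbor $w$ on $V(C)$.

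Next I would exploit the almost $C$-dominating property in two ways. First, because consecutive neighbors of $u$ on $C$ are at $C$-distance at most $3$, every three consecutive vertices of $C$ contain a vertex of $N_G(u) \cap V(C)$, and similarly for $v$. Second, $|N_G(u) \cap V(C)| \geq |V(C)|/3$, which is enormous. Pick a $u$-neighbor $u_1 \in V(C)$ at $C$-distance at least $2$ from $w$ (or arbitrary if $w$ does not exist). Since $u_1 \neq w$, we have $u_1 \notin N_G(v)$, otherwise $u_1$ would be a second common neighbor. By the density observation applied to $v$, at least one vertex among $u_1$ and its two $C$-neighbors is in $N_G(v)$, and since $u_1 \notin N_G(v)$, some $C$-neighbor $v_1$ of $u_1$ satisfies $v_1 \in N_G(v)$. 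Moreover $v_1$ is at $C$-distance at least $1$ from $w$, so $v_1 \neq w$, and consequently $v_1 \notin N_G(u)$ (otherwise $v_1$ would be a second common neighbor). Repeating this construction on a well-separated arc of $C$ yields a second pair $(u_2, v_2)$ with the same properties and with $\{u_1, v_1\}, \{u_2, v_2\}$ at $C$-distance at least $2$ apart; the bound $|V(C)| > \mu_k$ leaves ample room.

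Finally, I would verify that $H := u\, u_1\, v_1\, v\, v_2\, u_2\, u$ is an induced $C_6$. The six listed edges exist by construction, and all six vertices are pairwise distinct thanks to the separation. The only candidate chords are $uv$; the pairs $uv_i$ and $vu_i$ for $i \in \{1,2\}$; and edges among $\{u_1, v_1, u_2, v_2\}$ other than $u_1 v_1$ and $u_2 v_2$. The first is excluded by hypothesis; $uv_i$ and $vu_i$ are non-edges because none of $u_1, u_2, v_1, v_2$ is a common neighbor of $u$ and $v$; and all remaining candidate chords join vertices of $C$ at $C$-distance at least $2$, which cannot be edges since $C$ is an induced cycle. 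Thus $H$ is a long hole of length $6 < |V(C)|$, contradicting that $C$ is a shortest long hole, so we must have $uv \in E(G)$. The main delicacy is the bookkeeping around the possible common neighbor $w$ and ensuring the $C$-separation of the two pairs; once that is in place, the chord check is mechanical.
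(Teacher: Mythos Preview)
Your argument is essentially the paper's approach and is correct in spirit, but there is one small slip. You assert that two distinct common neighbors $w,w'\in V(C)$ of $u$ and $v$ would yield an induced $C_4=uwvw'u$; this fails when $w$ and $w'$ are adjacent on $C$, since then $ww'\in E(G)$ is a chord and the $4$-cycle is not induced. The definition of almost $C$-dominating allows consecutive $C$-neighbors at distance~$1$, so this case can occur. The correct statement is that the common neighbors of $u$ and $v$ on $C$ lie in a single edge of $C$: any two \emph{non-adjacent} common neighbors do give an induced $C_4$, so there are at most two and they must be adjacent. This does not damage your construction: simply choose $u_1$ (and later $u_2$) at $C$-distance at least $2$ from the set $W=N_G(u)\cap N_G(v)\cap V(C)$ rather than from a single vertex $w$; then $u_1\notin W$ forces $u_1\notin N_G(v)$, and the chosen $C$-neighbor $v_1$ of $u_1$ satisfies $\dist_C(v_1,W)\ge 1$, hence $v_1\notin W$ and $v_1\notin N_G(u)$. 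The rest of your chord check goes through unchanged.

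For comparison, the paper sidesteps this issue by first fixing three length-$2$ subpaths $P_1,P_2,P_3$ of $C$ pairwise at $C$-distance at least $2$; any two common neighbors coming from different $P_i$ are then automatically non-adjacent, so one finds two $P_i$'s free of common neighbors and concatenates shortest $(a,b)$-paths through them to obtain a hole of length between $6$ and $8$. Your variant pins down an exact induced $C_6$, but the underlying idea is the same.
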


\begin{proof}
Let $a$ and $b$ be almost $C$-dominating vertices and towards a contradiction suppose that $a$ and $b$ are not adjacent.
Let $P_1, P_2, P_3$ be subpaths of $C$ such that each $P_i$ has length $2$ and $\dist_C(P_i, P_j)\ge 2$ for distinct $i,j\in [3]$.
We can choose such paths because $\mu_k\ge 11$ and $\abs{V(C)}>\mu_k$.
By the definition of an almost $C$-dominating vertex, 
$a$ has a neighbor in each of $P_1, P_2, P_3$, and $b$ has a neighbor in each of $P_1, P_2, P_3$.

If $a$ and $b$ have common neighbors in two of the $P_i$, then $G$ has a hole of length $4$, a contradiction.
Therefore, by symmetry, we may assume that  $a$ and $b$ have no common neighbors in $V(P_1 \cup P_2)$.  For $i\in [2]$, let $P_i^{ab}$ be a shortest $(a,b)$-path in $G[\{a,b\}\cup V(P_i)]$.  Then $C':=P_1^{ab} \cup P_2^{ab}$ is a hole of length at least $6$ and at most $8 < \mu_k < \abs{V(C)}$, which contradicts the assumption that $C$ is a shortest long hole.
\end{proof}

For the remainder of the paper, we let $D$ be the set of almost $C$-dominating vertices of $G$.  Since every hole of $G$ contains at most two vertices from each clique of $G$, we have the following corollary.

\begin{corollary}\label{cor:clique}
Every hole of $G$ contains at most two vertices of $D$.
\end{corollary}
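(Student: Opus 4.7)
The corollary follows almost immediately from Lemma~\ref{lem:cdominating}, which says that $D$ is a clique. So my plan is essentially to prove the general fact that any hole (induced cycle of length at least $4$) intersects any clique in at most two vertices, and then simply apply it to $D$.

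The argument I would give is by contradiction. Suppose some hole $H$ of $G$ contains three distinct vertices $u, v, w \in D$. Since $D$ is a clique, all three edges $uv$, $vw$, $uw$ lie in $E(G)$. Because $H$ is an induced subgraph of $G$, these edges must all be edges of $H$ as well. But $H$ is a cycle, so every vertex of $H$ has exactly two neighbors in $V(H)$. Since $u$ is already adjacent in $H$ to both $v$ and $w$, these must be the only two neighbors of $u$ in $V(H)$; applying the same reasoning at $v$ and $w$, we conclude that $V(H) = \{u,v,w\}$ and $H$ is a triangle. This contradicts the fact that a hole has length at least $4$.

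There is no real obstacle here, and no extra properties of $D$ are needed beyond Lemma~\ref{lem:cdominating}. The only subtlety to be careful about is invoking the \emph{induced} nature of holes: it is precisely because $H$ is chordless that the edges $uv$, $vw$, $uw$ forced by the clique property must belong to $H$ as cycle edges, not as chords. This gives the short two-line proof once the clique lemma is in place.
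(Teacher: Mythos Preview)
Your proposal is correct and takes essentially the same approach as the paper: the paper simply observes (in the sentence preceding the corollary) that every hole contains at most two vertices from any clique, and then applies this to $D$ via Lemma~\ref{lem:cdominating}. Your argument spells out why this general fact holds, which the paper leaves implicit.
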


\subsection{Cycle-distance lemma}\label{subsec:distance}
	We now give a lower bound on the length of a path in $G-D$ such that its ends are on $C$ and are far apart in $C$. 
Lemmas~\ref{lem:distancebasic1} and \ref{lem:distancebasic2} for $C$-paths are essentially shown in \cite[Section 4.1]{KK20}, with a similar assumption. 
 We extend the argument for $C$-paths to all paths in $G-D$ in Lemma~\ref{lem:distancebasic3}.  This lemma will be used repeatedly throughout the paper.  


\begin{lemma}\label{lem:distancebasic1}
If $x$ and $y$ are vertices of $C$ such that $\dist_C(x,y)\ge 4$, then  
every $(x,y)$-path in $G-D$ has length at least $4$.
\end{lemma}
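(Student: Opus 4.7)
The plan is to argue by contradiction, deriving everything from the minimality of $C$ together with Lemma~\ref{lem:consecutive}. I would suppose $P$ is an $(x,y)$-path in $G-D$ with $|E(P)|\leq 3$, and let $P_1,P_2$ denote the two $(x,y)$-subpaths of $C$, with $|P_1|=\dist_C(x,y)\geq 4$ and $|P_2|=|V(C)|-|P_1|\geq |V(C)|/2$. Since $|V(C)|>\mu_k$, the length $|P_2|$ is very large; this I exploit freely. Then I would split into cases on $|E(P)|$.

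If $|E(P)|=1$, then $xy\in E(G)\setminus E(C)$ (since $\dist_C(x,y)\geq 4$), and $P_2$ together with the edge $xy$ is an induced cycle of length $|P_2|+1$: this is at least $6$ by the largeness of $|P_2|$ and strictly less than $|V(C)|$, giving a long hole shorter than $C$, contradiction. If $|E(P)|=2$ with internal vertex $z$, then either $z\in V(C)$, in which case $zx,zy\in E(C)$ forces $\dist_C(x,y)\leq 2$; or $z\notin V(C)$ and $z\notin D$, so Lemma~\ref{lem:consecutive} bounds $N_G(z)\cap V(C)$ by three consecutive vertices of $C$, contradicting $x,y\in N_G(z)\cap V(C)$ with $\dist_C(x,y)\geq 4$.

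The main case is $|E(P)|=3$; write $P=xaby$. If both $a,b\in V(C)$, then $xa,ab,by\in E(C)$ by the inducedness of $C$, so $xaby$ is a subpath of $C$, yielding $\dist_C(x,y)\leq 3$, contradiction. If exactly one of $a,b$ lies on $V(C)$, say $a\in V(C)$ and $b\notin V(C)$, then $xa\in E(C)$ gives $\dist_C(a,y)\geq 3$ by the triangle inequality on $C$; but $b\notin D$ has both $a$ and $y$ in $N_G(b)\cap V(C)$, and Lemma~\ref{lem:consecutive} forces these to be within $C$-distance $2$, contradiction. In the remaining subcase, neither $a$ nor $b$ is on $V(C)$, so Lemma~\ref{lem:consecutive} says $N_G(a)\cap V(C)$ and $N_G(b)\cap V(C)$ are each subpaths of $C$ of length at most $2$. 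I would choose $x'$ to be the neighbor of $a$ on $V(P_2)$ that is farthest from $x$ along $P_2$ (possibly $x'=x$), and symmetrically $y'$ for $b$ and $y$, so $\dist_{P_2}(x,x'),\dist_{P_2}(y,y')\leq 2$. The candidate cycle $\Gamma$ traverses the edges $x'a$, $ab$, $by'$ and then the subpath of $P_2$ from $y'$ back to $x'$ through the middle, with length $3+|P_2|-\dist_{P_2}(x,x')-\dist_{P_2}(y,y')$: this is strictly less than $|V(C)|$ since $|P_1|\geq 4$, and at least $6$ since $|P_2|$ is huge. The farthest-neighbor choice of $x'$ and $y'$ ensures that neither $a$ nor $b$ has a chord to the interior of $x'P_2y'$, and $V(x'P_2y')\subseteq V(C)$ has no internal chords since $C$ is induced; so $\Gamma$ is an induced long hole strictly shorter than $C$, the final contradiction.

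The hardest step will be this last subcase: constructing $\Gamma$ and verifying it is induced. The farthest-neighbor trick, combined with the ``at most three consecutive $C$-neighbors'' structure from Lemma~\ref{lem:consecutive}, is the essential mechanism preventing $a$ and $b$ from introducing chords into the middle of $P_2$. The earlier subcases reduce cleanly via the triangle inequality on $C$ and a single application of Lemma~\ref{lem:consecutive}.
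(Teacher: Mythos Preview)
Your proof is correct and follows essentially the same approach as the paper: both argue by contradiction, use Lemma~\ref{lem:consecutive} to dispose of the short cases, and in the remaining case (both internal vertices off $C$) build a long hole shorter than $C$ by replacing the short arc of $C$ with the three-edge detour through $a$ and $b$. The paper compresses this final construction into a single line (``$G[V(Q)\cup V(R)]$ contains a hole $C'$ of length at least $8-4+3=7$''), while you spell out the explicit cycle $\Gamma$ and verify inducedness via the farthest-neighbor choice; your length-$1$ case is slightly roundabout (it suffices to note that $C$ is induced so $xy\notin E(G)$), but still correct.
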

\begin{proof}
Towards a contradiction, suppose there is an $(x,y)$-path $R$ in $G-D$ of length at most $3$.
Let $P$ and $Q$ be the two $(x,y)$-paths in $C$ such that $\abs{V(P)}\le \abs{V(Q)}$.
Since $\mu_k\ge 16$, $Q$ has length at least $8$. 

Since $C$ is a hole and $\dist_C(x,y)\ge 4$, $R$ has an internal vertex contained in $V(G)\setminus V(C)$.
By Lemma~\ref{lem:consecutive}, every vertex of $V(R)\setminus V(C)$ has at most $3$ neighbors on $C$ and these neighbors are consecutive along $C$. So, $R$ has length $3$. If one of the internal vertices of $R$ is in $C$, then the other internal vertex has two neighbors in $C$ that have distance at least $3$ in $C$, contradicting Lemma~\ref{lem:consecutive}. Therefore, every internal vertex of $R$ is not contained in $C$.

Now, $G[V(Q)\cup V(R)]$ contains a hole $C'$ of length at least $8-4+3=7$.
Since $P$ contains at least three internal vertices while $R$ contains at most two internal vertices, 
$C'$ is shorter than $C$, a contradiction.
\end{proof}

\begin{lemma}\label{lem:distancebasic2}
Let $m \in \N$, and $P$ be a $C$-path in $G-D$ with ends $x$ and $y$.
If $\dist_C(x,y)\geq  4m$, then $P$ has length at least $m+3$.
\end{lemma}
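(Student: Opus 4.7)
The plan is to induct on $m$. The base case $m = 1$ is immediate from Lemma~\ref{lem:distancebasic1}. For the inductive step, assume $m \geq 2$ and the lemma for all smaller values. Taking $P$ to be a shortest $C$-path in $G-D$ between $x$ and $y$ loses no generality; in particular $P$ is induced, so it has no chord $v_iv_j$ with $|i-j| \geq 2$, and the contrapositive of the inductive hypothesis with $m' = m-1$ already gives $|P| \geq m+2$. Thus it suffices to derive a contradiction from the tight case $|P| = m+2$.

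Write $P = xv_1v_2\cdots v_{\ell-1}y$ with $\ell = m+2$, and parametrize the short arc of $C$ from $x$ to $y$ as $P' = w_0w_1\cdots w_s$ with $w_0 = x$, $w_s = y$, $s \geq 4m$. Applying Lemma~\ref{lem:consecutive} to $v_1$, its neighbors on $C$ form at most three consecutive vertices containing $x$, so on the $P'$-side they lie in $\{w_0, w_1, w_2\}$. Let $j^* := \max\{j \in \{0,1,2\} : v_1 \sim w_j\}$ and symmetrically $k^* := \min\{k \in \{s-2, s-1, s\} : v_{\ell-1} \sim w_k\}$; then $k^* - j^* \geq s-4$. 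Consider the cycle
\[
C^* := v_1\, w_{j^*}\, w_{j^*+1}\, \cdots\, w_{k^*}\, v_{\ell-1}\, v_{\ell-2}\, \cdots\, v_2\, v_1,
\]
of length $\ell + (k^* - j^*)$.

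The key step is to show that $C^*$ is a hole. Chords inside the $v$-part are excluded because $P$ is induced; chords inside the $w$-part are excluded because $C$ is itself a hole. A chord $v_1w_j$ with $j > j^*$ is excluded by the maximality of $j^*$ together with $v_1$'s $P'$-side neighbors being confined to $\{w_0, w_1, w_2\}$, and chords $v_{\ell-1}w_j$ with $j < k^*$ are excluded symmetrically. The only remaining potential chords are $v_iw_j$ with $i \in [2,\ell-2]$ and $j \in [j^*, k^*]$. If $j \in \{0, s\}$, the shortcut $xv_iv_{i+1}\cdots y$ (or $xv_1\cdots v_iy$) is a $C$-path in $G-D$ of length at most $m+1$, which by the inductive hypothesis with $m' = m-1$ forces $\dist_C(x,y) < 4m$, a contradiction. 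For $j \in [1, s-1]$, both subpaths $xv_1\cdots v_iw_j$ and $w_jv_i\cdots v_{\ell-1}y$ are $C$-paths in $G-D$; their lengths are $i+1$ and $m+3-i$, respectively, and the inductive hypothesis applied with $m' = i-1$ and $m' = m+1-i$ (both in $[1, m-1]$) yields $\dist_C(x, w_j) \leq 4i-5$ and $\dist_C(w_j, y) \leq 4m-4i+3$. Summing via the triangle inequality for $\dist_C$ gives $\dist_C(x,y) \leq 4m-2$, again contradicting $\dist_C(x,y) \geq 4m$.

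Hence $C^*$ is a hole. Its length satisfies $|C^*| = \ell + (k^* - j^*) \geq (m+2)+(s-4) \geq 5m-2 \geq 8$, so $C^*$ is long; and $|C^*| \leq (m+2)+s < 2s \leq |C|$, because the long arc of $C$ has length at least $s$. This contradicts $C$ being a shortest long hole of $G$, completing the induction. The main obstacle is the chord-elimination step: one must design $C^*$ using the ``maximal shortcut'' vertices $w_{j^*}$ and $w_{k^*}$ so that chords incident to the endpoints $v_1$ and $v_{\ell-1}$ are automatically blocked, and then the two induction estimates on $\dist_C(x,w_j)$ and $\dist_C(w_j, y)$ must combine sharply enough to improve upon $\dist_C(x,y) \geq 4m$ by precisely the two units required.
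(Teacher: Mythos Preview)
Your proof is correct and is essentially a repackaging of the paper's argument. The paper splits into two cases depending on whether some internal vertex $p_i$ with $3\le i\le n-2$ has a neighbour on $C$: in Case~1 it applies induction to the two resulting $C$-paths, and in Case~2 it builds a short long hole from $p_2\cdots p_{n-1}$ together with a shortest arc of $C$ between neighbours of $p_2$ and $p_{n-1}$. You instead write down a single candidate cycle $C^*$ from the outset and absorb the case split into the chord-elimination step: the inductive estimates you use to rule out a chord $v_iw_j$ are exactly the paper's Case~1 computation (with the same numerics after reindexing), and your final length comparison $|C^*|\le (m+2)+s<2s\le |V(C)|$ plays the role of the paper's Case~2 bound. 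Reducing first to the tight case $|P|=m+2$ is a small but pleasant simplification that makes the arithmetic cleaner.

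One harmless redundancy: the subcase $j\in\{0,s\}$ in your chord analysis is vacuous, since $P$ being induced already forbids $v_ix$ and $v_iy$ for $2\le i\le \ell-2$; you could simply drop that subcase.
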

\begin{proof}
We proceed by induction on $m$.
The base case $m=1$ follows from Lemma~\ref{lem:distancebasic1}. 
Fix $m\ge 2$ and towards a contradiction let $P:=p_1p_2 \cdots p_n$ be a $C$-path of length at most $m+2$ in $G-D$ such that  $\dist_C(p_1,p_n)\geq 4m$.  We may clearly assume that $P$ is a shortest such path.  In particular, $P$ is an induced path.  
Note that $n-1\le m+2$ and thus, $m\ge n-3$.
Since $\dist_C(x,y)\ge 4m\ge 4$, Lemma~\ref{lem:distancebasic1} implies that $P$ has length at least $4$.
We distinguish cases depending on whether $\{p_j : j\in [n-2]\setminus [2]\}$ contains a vertex in $N_G(C)$ or not.

\medskip
\noindent {\bf Case 1.} $\{p_j : j\in [n-2]\setminus [2]\}$ contains a vertex in $N_G(C)$. \\
We choose an integer $i\in [n-2]\setminus [2]$ such that $p_i\in N_G(C)$, and
choose a neighbor $z$ of $p_i$ in $C$.
Since $p_1p_2 \cdots p_iz$ is a $C$-path of length $i<m$, by the induction hypothesis, $\dist_C(p_1, z)< 4(i-2)$.
Since $zp_ip_{i+1} \cdots p_{n-1}p_n$ is a $C$-path of length $n-i+1\le m+4-i\le m+1$, 
by the induction hypothesis, we have $\dist_C(z, p_n)<4(n-i+1-2)=4(n-i-1)$.
Therefore, we have 
\[\dist_C(p_1, p_n)\le \dist_C(p_1, z)+\dist_C(z, p_n)<4(n-3)\le 4m,\]
a contradiction.

\medskip
\noindent {\bf Case 2.} $\{p_j : j\in [n-2]\setminus [2]\}$ contains no vertices in $N_G(C)$.
\\
Let $Q$ be a shortest path from $N_G(p_2)\cap V(C)$ to $N_G(p_{n-1})\cap V(C)$ in $C$, and let $q, q'$ be its ends.
Let $C':= p_2Pp_{n-1}q'Qqp_2$.  By assumption, there are no edges between $\{p_j : j\in [n-2]\setminus [2]\}$ and $V(Q)$.  By the minimality of $|V(Q)|$ there are no edges between $\{p_2, p_{n-1}\}$ and $V(Q)$.  Finally, since $P$ and $Q$ are induced paths, it follows that $C'$ is a hole.  Moreover, because neither $p_2$ nor $p_{n-1}$ are in $D$, Lemma~\ref{lem:consecutive} implies $\dist_C(q,q') \geq \dist_C(p_1, p_n)-4 \geq 4m -4 \geq 4$.  Therefore, $|V(C')| \geq 6$.  

Observe that $\dist_C(p_1,p_n)\le \frac{\abs{V(C)}}{2}$ and 
$m\le \frac{\dist_C(p_1,p_n)}{4}\le \frac{\abs{V(C)}}{8}$. Thus, $P$ has length at most $m+2\le \frac{\abs{V(C)}}{8}+2$. 
Therefore, 
\[ \abs{V(C')}=\abs{V(Q)}+\abs{(V(P)\setminus \{p_1, p_n\})}\le \frac{\abs{V(C)}}{2}+ 1 +  \frac{\abs{V(C)}}{8} +1 < \abs{V(C)}.\]
This contradicts that $C$ is a shortest long hole of $G$.

\medskip
This concludes the proof.
\end{proof}

  \begin{lemma}\label{lem:distancebasic3}
Let $m \in \N$, and $P$ be a path in $G-D$ with ends $x$ and $y$ in $C$.
If $\dist_C(x,y)\geq  4m$, then $P$ has length at least $m+3$.
\end{lemma}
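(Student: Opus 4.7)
My plan is to prove Lemma~\ref{lem:distancebasic3} by induction on $m$, leveraging Lemma~\ref{lem:distancebasic2} (which handles the $C$-path case) at each step. The base case $m = 1$ is exactly Lemma~\ref{lem:distancebasic1}, which is already stated for arbitrary $(x,y)$-paths in $G-D$ rather than only $C$-paths, so no work is needed there.

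For the inductive step with $m \geq 2$, let $P$ be an $(x,y)$-path in $G-D$ with $\dist_C(x,y) \geq 4m$. If no internal vertex of $P$ lies on $V(C)$, then $P$ is a $C$-path and Lemma~\ref{lem:distancebasic2} immediately delivers $|P| \geq m + 3$. Otherwise, I would take $z$ to be the \emph{first} internal vertex of $P$ on $V(C)$ and split $P = P_1 P_2$ with $P_1 := xPz$ and $P_2 := zPy$. Choosing $z$ to be the first such vertex is the key point: it makes $P_1$ a genuine $C$-path, so that Lemma~\ref{lem:distancebasic2} applies to $P_1$, while $P_2$ is an arbitrary path in $G-D$ whose ends $z,y$ both lie on $V(C)$, on which the inductive hypothesis can be invoked.

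Writing $a := \dist_C(x,z)$ and $b := \dist_C(z,y)$, the triangle inequality along $C$ gives $a + b \geq 4m$, and I split into three subcases. If $a \geq 4(m-1)$, Lemma~\ref{lem:distancebasic2} applied to $P_1$ at parameter $m-1$ gives $|P_1| \geq m + 2$, and $|P_2| \geq 1$ yields $|P| \geq m + 3$. Symmetrically, if $b \geq 4(m-1)$, the inductive hypothesis on $P_2$ at parameter $m-1$ gives $|P_2| \geq m + 2$, and $|P_1| \geq 1$ finishes. In the remaining case $a, b < 4(m-1)$, the constraint $a + b \geq 4m$ forces both $a, b \geq 4$, so $m_1 := \lfloor a/4 \rfloor$ and $m_2 := \lfloor b/4 \rfloor$ both lie in $[1, m-2]$; from $a \leq 4 m_1 + 3$, $b \leq 4 m_2 + 3$, and $a + b \geq 4m$ one deduces $m_1 + m_2 \geq m - 1$, and then Lemma~\ref{lem:distancebasic2} on $P_1$ combined with the inductive hypothesis on $P_2$ (permissible since $m_2 < m$) gives $|P| \geq (m_1 + 3) + (m_2 + 3) \geq m + 5$.

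The only delicate point is ensuring that the parameter fed into the inductive hypothesis for $P_2$ is strictly less than $m$; partitioning on whether $a$ or $b$ exceeds $4(m-1)$ guarantees this. The remaining work is routine bookkeeping reconciling the floors $\lfloor a/4 \rfloor$ and $\lfloor b/4 \rfloor$ with the inequality $a + b \geq 4m$.
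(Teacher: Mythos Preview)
Your proof is correct and shares the paper's overall strategy: induct on $m$, dispose of the $C$-path case via Lemma~\ref{lem:distancebasic2}, and otherwise split $P$ at an internal vertex lying on $C$, combining the triangle inequality on $C$ with the inductive hypothesis. The execution differs, though. The paper argues by contradiction: assuming $P=p_1\cdots p_n$ has length $n-1\le m+2$, it picks \emph{any} internal vertex $p_i\in V(C)$ and applies the inductive hypothesis in contrapositive form to both halves, obtaining $\dist_C(p_1,p_i)<4(i-3)$ and $\dist_C(p_i,p_n)<4(n-i-2)$, hence $\dist_C(p_1,p_n)<4(n-5)<4m$. This avoids your three-way case split and the floor arithmetic on $m_1,m_2$ entirely. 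It also makes your insistence on the \emph{first} internal $C$-vertex (so that $P_1$ is a genuine $C$-path and Lemma~\ref{lem:distancebasic2} applies) unnecessary: since the inductive hypothesis already covers arbitrary paths in $G-D$ with ends on $C$, one may as well use it on both pieces. Your route is sound but buys nothing extra; the paper's contrapositive framing is simply shorter.
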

\begin{proof}
We proceed by induction on $m$.  The base case $m=1$ follows from Lemma~\ref{lem:distancebasic1}. Fix $m \geq 2$ and towards a contradiction let $P=p_1p_2 \cdots p_n$ with $p_1=x, p_n=y$, $n \leq m+3$, and $\dist_C(x,y)\geq  4m$.  We may assume that $P$ contains a vertex of $C$ as an internal vertex, otherwise $P$ is a $C$-path, and we are done by Lemma~\ref{lem:distancebasic2}.

Let $i\in [n-1]\setminus [1]$ be such that $p_i\in V(C)$.
By the induction hypothesis, $\dist_C(p_1, p_i)< 4(i-3)$, and
 $\dist_C(p_i, p_n)< 4(n-i-2)$.
 Thus, $\dist_C(p_1, p_n)<4(i-3)+4(n-i-2)=4(n-5)<4m$.
\end{proof}

\section{First Greedy Packing of Long Holes}\label{sec:greedy}
In this section we construct a first greedy packing of long holes in $G$.  For a vertex $v \in V(C)$ and a positive integer $r$, let $\s^r_v:= N^r_{G-(D \cup (V(C)\setminus \{v\}))} [v]$. For $X \subseteq V(C)$ and $r \in \N$, let $\s^r_X:= \bigcup_{v \in X} \s^r_v$ and for a subgraph $H$ of $C$, let $\s^r_H:=\s^r_{V(H)}$.  

\medskip

\textbf{Algorithm. (First greedy packing)}

\begin{enumerate}
	\item Choose a vertex $v_{\init} \in V(C)$ to hit $C$. Initialize $C_1=C-v_{\init}$, $D_1=D$, and $M_1=\{v_{\init}\}$. 
    \item Choose a previously unchosen pair $(A,B)$, where $A$ is a subset of size at most $2$ of $D_i$ (possibly, $A=\emptyset$) and $B$ is a set of at most $60$ consecutive vertices of $C$ contained in $C_i$.  If there are no remaining pairs, then proceed to step (6).
    \item Test if $G[A \cup \s^3_B]$ contains a long hole.   
    \item If no, then proceed to step (2).  
    \item If yes, let $H_i$ be a long hole of $G[A \cup \s^3_B]$ and set $D_{i+1}:=D_i \setminus A$ and $C_{i+1}:=C_i - N_C^{15}[B]$ and $M_{i+1}:=M_i\cup A\cup N_C^{15}[B]$, then proceed to step (1).  
    \item Let $\ell$ be the largest index for which $D_{\ell}$ exists. Define $X_{\ref{lem:greedypacking}}:=(M_{\ell}\cap D)\cup N_C^{60}[M_{\ell}\cap V(C)]$.
\end{enumerate}

\begin{lemma} \label{lem:greedypacking}
Let $\ell$ be the largest index for which $D_\ell$ exists in the first greedy packing algorithm.  Then $\{H_i:i\in [\ell-1]\}$ is a set of vertex-disjoint long holes of $G$ and $\abs{X_{\ref{lem:greedypacking}}}\le 212\ell$.
Moreover, for all subpaths $Q$ of $C$ on at most $60$ vertices, no long hole of $G-X_{\ref{lem:greedypacking}}$ is contained in $\s^3_Q\cup D$.
\end{lemma}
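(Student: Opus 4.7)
The plan is to verify the three assertions separately, using the distance lemmas of Section~\ref{sec:structure} as the main tools.

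For the vertex-disjointness of the packing, I would take $i < j \le \ell-1$ and suppose $v \in V(H_i) \cap V(H_j)$. Since $V(H_i) \subseteq A_i \cup \s^3_{B_i}$ and similarly for $j$, split into four subcases. Three subcases (those involving at least one $A_{\cdot}$) are immediate: $A_j \subseteq D_j \subseteq D_i \setminus A_i$ rules out $v \in A_i \cap A_j$, and $\s^3_{B} \cap D = \emptyset$ by the definition of $\s^3_{\cdot}$ rules out the mixed subcases. The substantive subcase is $v \in \s^3_{B_i} \cap \s^3_{B_j}$. Here I first observe that $\s^3_q \cap V(C) = \{q\}$ for every $q \in V(C)$, because the graph $G - (D \cup (V(C) \setminus \{q\}))$ contains only $q$ among vertices of $V(C)$; since $B_i \cap B_j = \emptyset$, this forces $v \notin V(C)$. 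Concatenating the length-$\le 3$ witnesses of $v \in \s^3_b$ ($b \in B_i$) and $v \in \s^3_{b'}$ ($b' \in B_j$) yields a $C$-path in $G - D$ from $b$ to $b'$ of length at most $6$. By Lemma~\ref{lem:distancebasic2} (with $m=4$) this forces $\dist_C(b,b') < 16$, whereas $B_j \subseteq C_j \subseteq C_i - N_C^{15}[B_i]$ gives $\dist_C(b,b') \ge 16$, a contradiction.

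The cardinality bound is a direct accounting. We have $M_\ell \cap D = \bigcup_{i < \ell} A_i$, contributing at most $2(\ell-1)$ vertices, and $M_\ell \cap V(C) = \{v_{\init}\} \cup \bigcup_{i < \ell} N_C^{15}[B_i]$. Expanding by $N_C^{60}$, we obtain $|N_C^{60}[v_{\init}]| \le 121$ and $|N_C^{75}[B_i]| \le |B_i| + 150 \le 210$, so $|X_{\ref{lem:greedypacking}}| \le 2(\ell-1) + 121 + 210(\ell-1) = 212(\ell-1) + 121 \le 212\ell$.

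The third and most delicate assertion is that no long hole $H$ of $G - X_{\ref{lem:greedypacking}}$ lies in $\s^3_Q \cup D$ for any subpath $Q$ of $C$ with $|V(Q)| \le 60$. Suppose $H$ and $Q$ form a counterexample. Using $\s^3_q \cap V(C) = \{q\}$ again, $V(H) \cap V(C) \subseteq V(Q)$. I split on whether $V(Q) \cap M_\ell = \emptyset$. In the easy case, $V(Q) \subseteq V(C_\ell)$ and $A := V(H) \cap D \subseteq D_\ell$ (because $V(H) \cap X_{\ref{lem:greedypacking}} = \emptyset$ and $M_\ell \cap D \subseteq X_{\ref{lem:greedypacking}}$), with $|A| \le 2$ by Corollary~\ref{cor:clique}; then $(A, V(Q))$ is a legitimate pair at stage $\ell$ and $H$ is a long hole of $G[A \cup \s^3_{V(Q)}]$, contradicting the termination of the algorithm. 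The hard case is $V(Q) \cap M_\ell \ne \emptyset$: pick $u$ in the intersection, and observe that since $V(Q)$ is a consecutive at most $60$-vertex interval of $C$, $V(Q) \subseteq N_C^{59}[u] \subseteq N_C^{60}[M_\ell \cap V(C)] \subseteq X_{\ref{lem:greedypacking}}$. Hence $V(H) \cap V(C) \subseteq V(H) \cap V(Q) = \emptyset$, so $H$ is a long hole of $G - V(C)$, contradicting the standing assumption in Theorem~\ref{thm:main2} that $G - V(C)$ has no long hole. This hard case is the main obstacle, and it is precisely what motivates the enlarged $N_C^{60}$ buffer in the definition of $X_{\ref{lem:greedypacking}}$ together with the standing hypothesis on $G - V(C)$.
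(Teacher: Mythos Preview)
Your proof is correct and follows essentially the same approach as the paper's. The only differences are cosmetic: in Part~3 you make an explicit case split on whether $V(Q)\cap M_\ell$ is empty, whereas the paper shows directly (via the existence of a vertex of $H$ in $V(C)\setminus X_{\ref{lem:greedypacking}}$) that $Q$ must miss $M_\ell\cap V(C)$; and in Part~1 you invoke Lemma~\ref{lem:distancebasic2} on a $C$-path while the paper simply cites the more general Lemma~\ref{lem:distancebasic3}. One small point of phrasing: the concatenation of the two length-$\le 3$ witnesses is a priori only a walk, so strictly you should pass to a $(b,b')$-path inside it (which is still a $C$-path in $G-D$ of length at most $6$); this is harmless but worth saying.
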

\begin{proof}
Let $i,j$ be distinct indices in $[\ell-1]$ and let $(A_i, B_i)$ and $(A_j, B_j)$ be the pairs considered by the algorithm when the holes $H_i$ and $H_j$ were constructed, respectively.
By construction, $A_i\cap A_j=\emptyset$ and $\dist_C(B_i, B_j) \geq 16$. 
If $\s^3_{B_i}$ and $\s^3_{B_j}$ have a common vertex, then there is a $C$-path in $G-D$ of length at most $6$ that starts at $B_1$ and ends at $B_2$.
Therefore, by Lemma~\ref{lem:distancebasic3}, $\dist_C(B_1, B_2)<16$, which is a contradiction.
Thus, $\s^3_{B_1}$ and $\s^3_{B_2}$ are disjoint, and so $H_i$ and $H_j$ are vertex-disjoint.

It is straightforward to check that $\abs{M_{\ell}\cap D}\le 2(\ell-1)$, $\abs{M_{\ell}\cap V(C)}\le 90(\ell-1)+1$, and  $C[M_{\ell}\cap V(C)]$ consists of at most $\ell$ connected components. Therefore, $\abs{X_{\ref{lem:greedypacking}}}\le 212\ell$.

For the second statement, suppose that $G-X_{\ref{lem:greedypacking}}$ has a long hole $H$ contained in $\s^3_Q\cup D$ for some subpath $Q$ of $C$  on at most $60$ vertices.
Because $G-V(C)$ has no long holes, $H$ contains a vertex in $V(C)\setminus X_{\ref{lem:greedypacking}}$.
Since $N^{60}_C[M_{\ell}\cap V(C)] \subseteq X_{\ref{lem:greedypacking}}$, 
\[\dist_C(V(H)\cap V(C), M_{\ell}\cap V(C))\ge 61, \]
and $Q$ is disjoint from $M_{\ell}\cap V(C)$. Therefore, $Q$ is fully contained in $C_{\ell}$. 
By considering the pair $(V(H)\cap D, V(Q) )$ in the algorithm, we should have proceeded one more step, which is a contradiction.
\end{proof}

The next lemma extends the second part of Lemma~\ref{lem:greedypacking}, by showing that for all subpaths $Q$ of $C-X_{\ref{lem:greedypacking}}$, there is no long hole of $G-X_{\ref{lem:greedypacking}}$ contained in $\s^3_Q$. We stress that there could still be a subpath $Q$ of $C$ such that $G-X_{\ref{lem:greedypacking}}$ has a long hole contained $\s^3_Q$. However, these remaining holes will be dealt with in Section~\ref{sec:avoiding}. 

    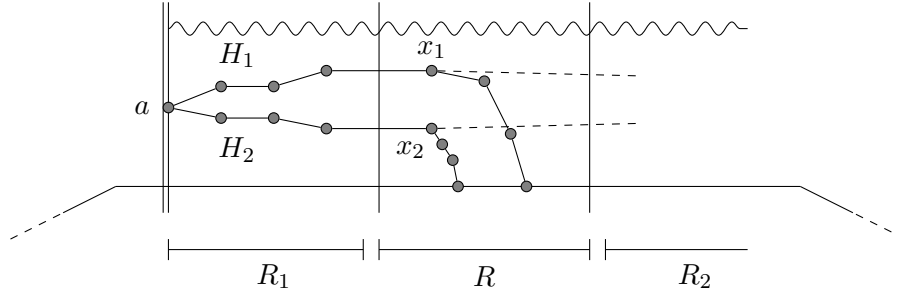
\begin{figure}
  \centering
  \begin{tikzpicture}[scale=0.7]
  \tikzstyle{w}=[circle,draw,fill=black!50,inner sep=0pt,minimum width=4pt]

   \draw (-2,0)--(11,0);
	\draw(-2, 0)--(-3,-0.5);
	\draw(11, 0)--(12,-0.5);
    \draw[dashed](13, -1)--(12,-0.5);
	\draw[dashed](-4,-1)--(-3,-0.5);

     \node at (1, -1.7) {$R_1$};
     \node at (5, -1.7) {$R$};
     \node at (9, -1.7) {$R_2$};
    
     \draw (3, -0.5)--(3, 3.5);
     \draw (7, -0.5)--(7, 3.5);

     \draw (-1, -0.5)--(-1, 3.5);
     \draw (-1.1, -0.5)--(-1.1, 3.5);

   \draw (-1, -1)--(-1, -1.4);
   \draw (2.7, -1)--(2.7, -1.4);
   \draw (3, -1)--(3, -1.4);
   \draw (7, -1)--(7, -1.4);
   \draw (7.3, -1)--(7.3, -1.4);
   \draw (-1, -1.2)--(2.7, -1.2);
 \draw (3, -1.2)--(7, -1.2);
 \draw (7.3, -1.2)--(10, -1.2);

    \draw[decorate, decoration={snake}] (-1, 3)--(10, 3);
 	
    \draw (-1, 1.5) node [w] (a) {};
\draw (0, 1.9) node [w] (v1) {};
\draw (1, 1.9) node [w] (v2) {};
\draw (2, 2.2) node [w] (v3) {};
\draw (4, 2.2) node [w] (v4) {};
\draw (5, 2) node [w] (v5) {};
\draw (5.5, 1) node [w] (v6) {};
\draw (5.8, 0) node [w] (v7) {};

\draw (0, 1.3) node [w] (w1) {};
\draw (1, 1.3) node [w] (w2) {};
\draw (2, 1.1) node [w] (w3) {};
\draw (4, 1.1) node [w] (w4) {};
\draw (4.2, 0.8) node [w] (w5) {};	
\draw (4.4, 0.5) node [w] (w6) {};	
   \draw (4.5, 0) node [w] (w7) {};

     \node at (-1.5, 1.5) {$a$};

     \node at (0.3, 2.5) {$H_1$};
     \node at (0.3, 0.7) {$H_2$};

     \node at (4, 2.6) {$x_1$};
     \node at (3.6, 0.7) {$x_2$};
      
      \draw[dashed] (w4)--(8, 1.2);
      \draw[dashed] (v4)--(8, 2.1);
      
     \draw(a)--(w1)--(w2)--(w3)--(w4)--(w5)--(w6)--(w7);
     \draw(a)--(v1)--(v2)--(v3)--(v4)--(v5)--(v6)--(v7);
     
  \end{tikzpicture}     \caption{The paths $H_1$ and $H_2$ in Lemma~\ref{lem:largesupp}.}\label{fig:greedy1}
\end{figure}

\begin{lemma}\label{lem:largesupp}
For all subpaths $Q$ of $C-X_{\ref{lem:greedypacking}}$, there is no long hole of $G-X_{\ref{lem:greedypacking}}$ contained in $\s^3_Q$.
\end{lemma}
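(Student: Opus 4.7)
Suppose for contradiction there is a long hole $H$ of $G-X_{\ref{lem:greedypacking}}$ contained in $\s^3_Q$ for some subpath $Q$ of $C-X_{\ref{lem:greedypacking}}$. By Lemma~\ref{lem:greedypacking}, we may assume $\abs{V(Q)}>60$. Since $\s^3_Q\cap D=\emptyset$ and $G-V(C)$ has no long hole, we have $V(H)\cap D=\emptyset$ and $\emptyset\ne V(H)\cap V(C)\subseteq V(Q)$.

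The plan is to produce a long hole of $G$ strictly shorter than $C$, contradicting its minimality. First I would fix a vertex $a\in V(H)$ and extract two $H$-subpaths $H_1, H_2$ starting at $a$ and terminating at the first $C$-vertices $x_1, x_2$ encountered when walking along $H$ in the two cyclic directions. Both $H_1$ and $H_2$ are $C$-paths in $G-D$, so Lemma~\ref{lem:distancebasic3} yields upper bounds $\dist_C(a,x_i)<4(\abs{H_i}-2)$, and in particular $\dist_C(x_1,x_2)$ is controlled in terms of $\abs{H_1}+\abs{H_2}$. I would then attach to $H_1\cup H_2$ the shorter $C$-arc between $x_1$ and $x_2$ to obtain a candidate cycle whose total length is bounded above by $\abs{H_1}+\abs{H_2}+4(\abs{H_1}+\abs{H_2}-2)$, a constant times the length of the traversed portion of $H$.

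The main obstacle will be ensuring the candidate cycle (or an induced sub-cycle of it) is an \emph{induced} cycle of length at least $6$. The only possible chords are between internal vertices of $H_1\cup H_2$ (which lie off $V(C)$ and off $D$) and internal vertices of the chosen $C$-arc. Since $V(H)\cap D=\emptyset$, Lemma~\ref{lem:consecutive} forces each such internal non-$C$ vertex to have at most three consecutive neighbors on $C$; combined with $C_4$-freeness, this restricts the possible chord patterns enough to extract an induced sub-cycle whose length remains $\ge 6$ via the length lower bounds in Lemma~\ref{lem:distancebasic3}. Strict shortness follows because $\abs{V(C)}>\mu_k$ is enormous compared with the constants appearing in these bounds.

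In the degenerate case where $V(H)\cap V(C)$ already fits inside a $C$-interval of length at most $60$ (so no pair $x_1, x_2$ at large $C$-distance is available from $V(H)\cap V(C)$), I would instead reduce directly to Lemma~\ref{lem:greedypacking} by exhibiting a subpath $Q^*$ of $C$ on at most $60$ vertices with $H\subseteq\s^3_{Q^*}$. The subtle point is verifying that every $v\in V(H)\setminus V(C)$ lies in $\s^3_{Q^*}$: although $v\in\s^3_Q$ only guarantees a short path in $G-D$ from $v$ to some $u\in V(Q)$ not necessarily inside $V(Q^*)$, concatenating that projection path with an $H$-walk from $v$ to a nearest vertex of $V(H)\cap V(C)\subseteq V(Q^*)$ produces a path in $G-D$ between two $C$-vertices, and one more application of Lemma~\ref{lem:distancebasic3} forces $u$ to be within bounded $C$-distance of $V(Q^*)$, enough to reroute $v$ to $V(Q^*)$ within $\s^3$. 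Lemma~\ref{lem:greedypacking} then yields the desired contradiction.
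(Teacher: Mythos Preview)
Your main line of attack has a genuine gap: you never obtain a bound on $\abs{H_1}+\abs{H_2}$ that is small relative to $\abs{V(C)}$. You correctly derive $\dist_C(x_1,x_2)\le 4(\abs{H_1}+\abs{H_2}-2)$ from Lemma~\ref{lem:distancebasic3}, but the conclusion ``strict shortness follows because $\abs{V(C)}>\mu_k$ is enormous compared with the constants'' is unjustified: the quantities $\abs{H_1},\abs{H_2}$ are not constants. They are $H$-distances between consecutive $C$-vertices on $H$, and nothing prevents them from being on the order of $\abs{V(H)}$. Since $C$ is a \emph{shortest} long hole, $\abs{V(H)}\ge\abs{V(C)}$, so your candidate cycle of length at most $5(\abs{H_1}+\abs{H_2})$ could be as large as $5\abs{V(C)}$, which is useless for contradicting the minimality of $C$. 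The same problem infects your degenerate case: the ``$H$-walk from $v$ to a nearest vertex of $V(H)\cap V(C)$'' may be long, and Lemma~\ref{lem:distancebasic3} only turns \emph{short} paths into short $C$-distance, not long paths into anything.

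The paper avoids this trap by not aiming for a hole shorter than $C$ at all. Instead it chooses $Q$ minimal, which forces $H$ to have vertices $a,b$ projecting to the two ends of $Q$; then each of the two $(a,b)$-paths in $H$ must cross $\s^3_R$ for a fixed $20$-vertex middle segment $R$ of $Q$ (else Lemma~\ref{lem:distancebasic3} gives a short path across a gap of $\ge 20$). Taking the $H$-path from $a$ to the first entry into $\s^3_R$ on each side and closing up through $R$ produces a long hole contained in $\s^3_{q_1\cdots q_{40}}$, contradicting Lemma~\ref{lem:greedypacking} directly. The key point is that the contradiction target is Lemma~\ref{lem:greedypacking}, not the minimality of $C$, so no global length bound on pieces of $H$ is needed.
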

\begin{proof}
For a contradiction, suppose that
$G-X_{\ref{lem:greedypacking}}$ contains a long hole $H$ in $\s^3_Q$ for some subpath $Q$ of $C-X_{\ref{lem:greedypacking}}$.
We assume that $Q$ is a shortest such path.
By Lemma~\ref{lem:greedypacking}, $\abs{V(Q)}\ge 61$.
Let $Q:=q_1q_2 \cdots q_m$. By the minimality of $|V(Q)|$, 
\[(\s^3_{q_1}\setminus \s^3_{Q-q_1})\cap V(H)\neq \emptyset \quad \text{and} \quad (\s^3_{q_m}\setminus \s^3_{Q-q_m})\cap V(H)\neq \emptyset.\]
Let $a\in (\s^3_{q_1}\setminus \s^3_{Q-q_1})\cap V(H)$ and $b\in (\s^3_{q_m}\setminus \s^3_{Q-q_m})\cap V(H)$.  Clearly, we have $a \neq b$.  Let $H_1$ and $H_2$ be the two $(a,b)$-paths in $H$. See Figure~\ref{fig:greedy1} for an illustration.

Let $R:=q_{21}q_{22} \cdots q_{40}$.
Let $R_1$ and $R_2$ be the two components of $Q-V(R)$ containing $q_1$ and $q_m$ respectively.
If $a\in \s^3_R$, then $G-D$ contains a $(q_1, r)$-path of length at most $6$, for some $r \in V(R)$. However, since $\dist_C(q_1, V(R))\ge 16$, this contradicts Lemma~\ref{lem:distancebasic3}. For the same reason, $b\notin \s^3_R$.

We claim that for each $i\in [2]$, $V(H_i)\cap \s^3_R\neq \emptyset$.
Suppose for a contradiction that $V(H_i)\cap \s^3_R= \emptyset$.
Because $\s^3_{R_1}\cap V(H_i)\neq \emptyset$, $\s^3_{R_2}\cap V(H_i)\neq \emptyset$, and $V(H_i)\cap \s^3_R= \emptyset$, 
there must be an edge $uv$ in $H_i$ such that $u\in \s^3_{R_1}\cap V(H_i)$ and $v\in \s^3_{R_2}\cap V(H_i)$.  Thus, $G-D$ contains an $(r_1, r_2)$-path of length at most $7$, for some $r_1 \in V(R_1)$ and $r_2 \in V(R_2)$.  However, this contradicts Lemma~\ref{lem:distancebasic3}, since $\dist_C(R_1, R_2)\ge 20$. 

For each $i\in [2]$, let $x_i$ be the vertex in $\s^3_R\cap V(H_i)$ such that $\dist_{H_i}(a, x_i)$ is minimum.  Let $H':=x_1H_1aH_2x_2$.
If $\dist_{H_i}(a, x_i)\leq 1$, then there is a $(q_1, r)$-path of length at most $7$ in $G-D$, for some $r \in V(R)$.  
Since $\dist_C(q_1, R)\ge 20$, this contradicts Lemma~\ref{lem:distancebasic3}.  Thus, $H'$ has length at least $4$.
Let $P_i$ be a path of length at most $3$ from $x_i$ to $R$ in $\s^3_R$ for each $i\in [2]$.  
Observe that there is no edge between $(P_1-x_1)\cup (P_2-x_2)\cup R$ and $H'-\{x_1, x_2\}$, because of the choice of $x_1$ and $x_2$.
Therefore, if we let $P$ be a shortest $(x_1, x_2)$-path in $G[V(P_1)\cup V(P_2)\cup V(R)]$, then  
$H' \cup P$ is a long hole of $G-X_{\ref{lem:greedypacking}}$. However, $H' \cup P$ is contained in $\s^3_{q_1q_2 \cdots q_{40}}$, contradicting Lemma~\ref{lem:greedypacking}. 
\end{proof}

\section{Sparse Ear Decompositions}\label{sec:eardecomposition}
   A \emph{sparse ear decomposition} in $G$ is an ear decomposition $C \cup P_1 \cup  \dots \cup P_\ell$ of a subgraph $\h$ of $G$ satisfying the following conditions.

\begin{enumerate}[(i)]
    \item $C$ is the cycle of $G$ given by Theorem~\ref{thm:main2}, 
    \item $\h$ has maximum degree $3$, and
    \item for each $i \in [\ell]$, $P_i$ has an end $p_0$ in $C$ such that $P_i:=p_0p_1 \cdots p_m$ with $m \geq 6$, $P_i - \{p_0, p_m\}$ is an induced path in $G$, and 
   no vertex in $\{p_2,p_3,p_4\}$ has a neighbor in $C\cup P_1\cup \cdots \cup P_{i-1}$.
 \end{enumerate}
    A vertex of degree $3$ in a sparse ear decomposition $\h$ is called a \emph{branching point} of $\h$.
 
Let $\h=C \cup P_1 \cup  \dots \cup P_\ell$ be a sparse ear decomposition in $G$.  An $\h$-path $P$ \emph{extends} $\h$ if $C \cup P_1 \cup  \dots \cup P_\ell \cup P$ is a sparse ear decomposition.  We say that $\h$ is \emph{maximal} if there does not exist an $\h$-path extending $\h$.  
We say that $\h$ is \emph{good} if 
for every $i \in [\ell-1]$, 

	\begin{enumerate}
	    \item $P_i$ extends $C\cup P_1\cup \cdots \cup P_{i-1}$, 
		\item subject to (1), $\abs{V(P_i)\cap V(C)}$ is maximum, and 
		\item subject to (1) and (2), $P_i$ is shortest.
		\end{enumerate}

\begin{lemma} \label{lem:sparsealgo}
A maximal good sparse ear decomposition $\h$ in $G$ can be constructed in polynomial time.  
\end{lemma}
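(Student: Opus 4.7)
My plan is to construct $\h$ greedily, adding one ear at a time. Initialize $\h_0:=C$. At iteration $i\geq 1$, given $\h_{i-1}=C\cup P_1\cup\cdots\cup P_{i-1}$, decide whether some $\h_{i-1}$-path extends $\h_{i-1}$; if so, pick a best such path $P_i$ (first maximizing $\abs{V(P_i)\cap V(C)}$, then minimizing the length) and set $\h_i:=\h_{i-1}\cup P_i$. Otherwise halt and output $\h:=\h_{i-1}$, which is maximal by construction. Since each ear contributes at least five new internal vertices (as $m\geq 6$), the number of iterations is $O(\abs{V(G)})$, so it suffices to realize a single iteration in polynomial time.

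Because $P_i$ is an $\h_{i-1}$-path and $V(C)\subseteq V(\h_{i-1})$, the internal vertices of $P_i$ avoid $V(C)$; thus $\abs{V(P_i)\cap V(C)}\in\{1,2\}$, and maximizing this quantity reduces to first searching for extensions whose second endpoint lies in $V(C)$, and only failing that, searching for extensions whose second endpoint lies in $V(\h_{i-1})\setminus V(C)$. For each case, fix the corresponding target set $T\subseteq V(\h_{i-1})$, restricted to vertices of $\h_{i-1}$-degree at most $2$ (so that adding $P_i$ preserves maximum degree $3$).

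To find a shortest extension ending in $T$, enumerate all $6$-tuples $(p_0,p_1,p_2,p_3,p_4,p_5)$ such that $p_0\in V(C)$ has $\h_{i-1}$-degree at most $2$, $p_0p_1p_2p_3p_4p_5$ is an induced path in $G$, $p_1,\ldots,p_5\notin V(\h_{i-1})$, and $p_2,p_3,p_4$ have no neighbor in $V(\h_{i-1})$. There are polynomially many such tuples, each validated in polynomial time. For each valid tuple, form the auxiliary graph $G'$ from $G$ by deleting $\{p_0,p_1,p_2,p_3,p_4\}\cup (V(\h_{i-1})\setminus T)\cup \bigl(N_G(\{p_1,p_2,p_3,p_4\})\setminus (\{p_5\}\cup T)\bigr)$, and compute a shortest $(p_5,T)$-path $R$ in $G'$ by BFS. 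If $R$ exists, then $P:=p_0p_1p_2p_3p_4p_5R$ is an extending ear: the induced-interior property of $P-\{p_0,p_m\}$ holds because $p_0p_1\cdots p_5$ is induced by the tuple choice, $R$ is induced as a shortest path, the deletion in $G'$ forbids chords from the interior of $R$ back to $\{p_1,p_2,p_3,p_4\}$, and standard shortcut arguments rule out chords $p_5r_j$ for $r_j$ non-adjacent to $p_5$ in $R$; moreover $\abs{E(P)}\geq 6$ since $R$ has length $\geq 1$. Conversely, if any extending ear exists, its first six vertices form a valid tuple and its remainder is a $(p_5,T)$-path in the corresponding $G'$, so this procedure recovers a shortest extension.

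Choosing the overall best candidate across all tuples and both choices of $T$ yields $P_i$. The main subtlety I expect is engineering the deletion set in the construction of $G'$ so that the shortest-path calculation genuinely corresponds to an induced interior while still admitting all valid endpoints $p_m$: this is precisely why we exclude $T$ from the forbidden-neighbor deletion, since chords from an interior vertex to the endpoint $p_m$ do not violate the induced-path condition. Granted that, each iteration runs in time polynomial in $\abs{V(G)}$, and hence so does the entire construction.
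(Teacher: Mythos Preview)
Your approach is essentially the same as the paper's: greedily build the ear decomposition, at each step enumerating a $6$-tuple to fix a prefix of the next ear and completing it by a shortest-path computation. (The paper enumerates $(p_0,p_1,p_2,p_3,p_4,p_m)$, fixing both endpoints, whereas you enumerate $(p_0,p_1,\dots,p_5)$ and search over the target set $T$; both strategies work.)

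There is, however, a genuine gap in your tuple test. You require $p_0p_1p_2p_3p_4p_5$ to be an \emph{induced} path in $G$, which in particular forces $p_0p_5\notin E(G)$. But the definition of a sparse ear only asks that $P_i-\{p_0,p_m\}$ be induced; edges from $p_0$ to internal vertices other than $p_1$ are permitted. (The non-adjacencies $p_0p_2,p_0p_3,p_0p_4$ already follow from the condition that $\{p_2,p_3,p_4\}$ has no neighbour in $\h_{i-1}\supseteq C$, but nothing forbids $p_0p_5$.) Consequently your ``converse'' claim---that the first six vertices of any extending ear form a valid tuple---fails when $p_0p_5\in E(G)$. In that situation your enumeration may miss the unique shortest extension (or the unique one with both ends in $C$), so the ear you output need not satisfy the ``good'' optimality conditions used later (e.g.\ in Lemma~\ref{lem:extension}). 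The fix is simple: relax the tuple condition to ``$p_1p_2p_3p_4p_5$ is an induced path, $p_0p_1\in E(G)$, $p_0\in V(C)$ of $\h_{i-1}$-degree $\le 2$, $p_1,\dots,p_5\notin V(\h_{i-1})$, and $\{p_2,p_3,p_4\}\cap N_G[\h_{i-1}]=\emptyset$''. With this change your argument goes through, and the rest of your analysis (the construction of $G'$, the exclusion of $T$ from the neighbour deletion, and the shortest-path shortcut reasoning) is correct.
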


\begin{proof}
    Suppose we are given a good sparse ear decomposition $\h=C \cup P_1 \cup  \dots \cup P_\ell$.  We claim that in polynomial time, we can find a path $P$ such that 
    $P$ extends $\h$ and $C \cup P_1 \cup  \dots \cup P_\ell \cup P$ is good, or determine that no such path $P$ exists.  
    To do so, for all $6$-tuples of vertices $(p_0, p_1, p_2, p_3, p_4, p_m)$ of $G$ we define a potential candidate $P(p_0, p_1, p_2,p_3,p_4, p_m)$ for $P$ by first testing if:
	\begin{itemize}
		\item  $p_0\in V(C)$, $p_m\in V(\h)\setminus \{p_0\}$, and $p_0$ and $p_m$ are not branching points of $\h$,
		\item $p_0p_1\in E(G)$, $p_1p_2p_3p_4$ is an induced path in $G-V(\h)$,  and $\{p_2,p_3,p_4\} \cap N_G[\h]=\emptyset$.
	\end{itemize}
	
	If the above conditions fail, $P(p_0, p_1, p_2,p_3,p_4, p_m)$ is undefined.  If the above conditions hold, but there is no path from $p_4$ to $p_m$ in 
	\[
	G':=G-((V(\h)\cup N_G[\{p_1, p_2, p_3\}]) \setminus \{p_m\}),
	\]
	then $P(p_0, p_1, p_2,p_3,p_4, p_m)$ is undefined.  Otherwise, we choose a shortest path $Q$ from $p_4$ to $p_m$ in 
	$G'$, and 
	define $P(p_0, p_1, p_2,p_3,p_4, p_m):=p_0p_1p_2p_3p_4Q$.  If $P(p_0, p_1, p_2,p_3,p_4, p_m)$ is undefined for all ($p_0, p_1, p_2,p_3,p_4, p_m)$, then no such $P$ exists.  Otherwise, we choose $P$ to be the shortest path among 
	all $P(p_0, p_1, p_2,p_3,p_4, p_m)$ which contain the maximum number of vertices of $C$.  

	To complete the proof of the lemma we begin with the good sparse ear decomposition $\h=C$ and repeatedly apply the above claim to obtain a set of paths $P_1, \dots, P_\ell$ such that for all $i \in [\ell]$, 
    \begin{itemize}
        \item $C \cup P_1 \cup \dots \cup P_i$ is a good sparse ear decomposition,
        \item $P_i$ extends  $C \cup P_1 \cup \dots \cup P_{i-1}$, and
        \item  there is no path $P$ such that $P$ extends $C \cup P_1 \cup \dots \cup P_{\ell}$ and $C \cup P_1 \cup \dots \cup P_{\ell} \cup P$ is good.  
    \end{itemize}
   Then $C \cup P_1 \cup \dots \cup P_{\ell}$ is a maximal good sparse ear decomposition of $G$, and can clearly be found in polynomial time.
\end{proof}

	For the remainder of the paper, we fix a maximal good sparse ear decomposition 
	\[
	\h=C\cup P_1\cup \cdots \cup P_\ell.
	\]
    
    \begin{lemma}\label{lem:cycletohole}
    If $F$ is a cycle of $\h$, then $G[V(F)]$ contains a long hole.  
    Moreover, if $\h$ has at least $s_k$ branching points, then 
    we can output $k$ vertex-disjoint long holes of $G$ in polynomial time.  
    \end{lemma}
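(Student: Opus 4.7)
The plan is to prove the first statement by exhibiting an induced $5$-vertex path inside $G[V(F)]$ coming from the highest-indexed ear used by $F$, and then invoking Lemma~\ref{lem:detecting} inside the induced subgraph $G[V(F)]$. The moreover statement will then follow by suppressing the degree-$2$ vertices of $\h$ and applying Simonovits' theorem (Theorem~\ref{thm:simonovitz}) to the resulting cubic graph on the set of branching points, each of whose cycles lifts to a cycle of $\h$ containing a long hole by the first part.

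For a cycle $F$ in $\h$, I would handle the trivial case $F = C$ separately (since then $F$ is itself a long hole of $G$), and otherwise let $j$ be the largest index with $E(P_j) \cap E(F) \neq \emptyset$ and write $P_j = p_0 p_1 \cdots p_m$ with $m \geq 6$. Every internal vertex $p_i$ of $P_j$ has, in $\h$, only the two $P_j$-edges $p_{i-1} p_i, p_i p_{i+1}$ together with possibly edges of later ears $P_s$ with $s > j$ which are absent from $F$ by maximality of $j$. Hence whenever $F$ passes through such a $p_i$ it uses both $P_j$-edges, so $F$ must traverse $P_j$ from $p_0$ all the way to $p_m$; in particular, $V(P_j) \subseteq V(F) \subseteq V(\h_{j-1}) \cup V(P_j)$, where $\h_{j-1} := C \cup P_1 \cup \cdots \cup P_{j-1}$. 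Now property (iii) of the sparse ear decomposition says that $\{p_2, p_3, p_4\}$ has no neighbor in $V(\h_{j-1})$, and since $P_j - \{p_0, p_m\}$ is induced in $G$, the only neighbors of $\{p_2, p_3, p_4\}$ inside $V(P_j)$ are the five vertices $p_1, \ldots, p_5$. Therefore $Q := p_1 p_2 p_3 p_4 p_5$ is an induced path of length $4$ in $G[V(F)]$, and $N_{G[V(F)]}[\{p_2, p_3, p_4\}] \setminus \{p_1, p_5\} = \{p_2, p_3, p_4\}$. Going along $P_j$ from $p_5$ to $p_m$, then around $F \cap \h_{j-1}$ from $p_m$ back to $p_0$, and finally via the edge $p_0 p_1$, yields a $(p_5, p_1)$-path in $G[V(F)] - \{p_2, p_3, p_4\}$. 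Lemma~\ref{lem:detecting}, applied with host graph $G[V(F)]$, then produces a long hole of $G$ contained in $V(F)$.

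For the moreover statement, let $B$ be the set of branching points of $\h$ and assume $|B| \geq s_k$. Since $\h$ is $2$-connected (as it admits an ear decomposition starting from a cycle) and has maximum degree $3$, suppressing the degree-$2$ vertices of $\h$ gives a $3$-regular (multi)graph $\h'$ with $V(\h') = B$, on which Theorem~\ref{thm:simonovitz} produces $k$ pairwise vertex-disjoint cycles $F_1', \ldots, F_k'$ in polynomial time. Unsuppressing each $F_i'$ gives a cycle $F_i$ of $\h$, and because distinct suppressed edges of $\h'$ correspond to internally vertex-disjoint paths of $\h$, the cycles $F_1, \ldots, F_k$ remain pairwise vertex-disjoint. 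Applying the first part to each $F_i$ in polynomial time then outputs $k$ pairwise vertex-disjoint long holes of $G$, as required.

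The conceptual heart of the argument is property (iii) of sparse ear decompositions, which is tailored precisely so that the initial segment $p_1 p_2 p_3 p_4 p_5$ of the highest-indexed ear used by $F$ remains induced in $G[V(F)]$ and is insulated from the rest of $V(F)$, enabling the clean application of Lemma~\ref{lem:detecting}. The remaining work is essentially bookkeeping, including the handling of possible loops or parallel edges arising in the suppressed graph $\h'$, which presents no obstacle for Simonovits' theorem.
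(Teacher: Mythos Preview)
Your proposal is correct and follows essentially the same approach as the paper: both arguments select the highest-indexed ear $P_j$ used by $F$, observe that $V(P_j)\subseteq V(F)\subseteq V(\h_{j-1})\cup V(P_j)$, use property~(iii) to get the insulated induced path $p_1p_2p_3p_4p_5$, and then invoke Lemma~\ref{lem:detecting}. For the second part the paper applies Theorem~\ref{thm:simonovitz} directly to $\h$ (implicitly suppressing degree-$2$ vertices), whereas you make this suppression explicit; this is a cosmetic difference only.
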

    \begin{proof}
    For the first part, we may assume that $F$ is a cycle of $\h$ other than $C$, because $C$ is a long hole.
    Since $F$ is a cycle of $\h$ other than $C$,
    $F$ contains an internal vertex of $P_i$ for some $i\in [\ell]$.
    Let $i$ be the maximum integer in $[\ell]$ such that 
    $F$ contains an internal vertex of $P_i$.
    Observe that if $P_j$ has an end which is an internal vertex of $P_i$, then  $j>i$.  Therefore, by the choice of  
     $i$, $V(P_i) \subseteq V(F)$.  
    Let $P_i=p_0p_1 \cdots p_m$ be such that $p_0 \in V(C)$ and  
    no vertex in $\{p_2, p_3, p_4\}$ has a neighbor in $C\cup P_1\cup \cdots \cup P_{i-1}$.
    Then $p_1p_2p_3p_4p_5$ is an induced path 
    and no vertex in $\{p_2, p_3, p_4\}$ has a neighbor in $V(F) \setminus \{p_j: j\in [5]\}$.
    So, by Lemma~\ref{lem:detecting}, $G[V(F)]$ contains a long hole.
    
    For the second part, assume that $\h$ contains $s_{k}$ branching points.
    By Theorem~\ref{thm:simonovitz}, 
    we can find a set $\{F_i: i\in [k]\}$ of $k$ vertex-disjoint cycles of $\h$ in polynomial time.  By the first part, $G[V(F_j)]$ contains a long hole $H_j$ for all $j \in [k]$.  Finally, by Lemma~\ref{lem:shortest}, we can find long holes in $\{H_i: i\in [k]\}$ in polynomial time.   
	\end{proof}
	
	Let $B$ be the set of branching points of $\h$, and define
	\[X_{\ref{lem:sparsealgo}}:=N^{\buff}_C[B\cap V(C)] \cup (B\setminus V(C)).\]

	By Lemma~\ref{lem:cycletohole}, we may assume that $\abs{B}< s_{k}$.    We now prove a special property of $\h$ to be 
     used in Section~\ref{sec:avoiding}.
    
    \begin{lemma} \label{lem:extension}
    For each $j \in [\ell]$, let $\h_j=C \cup P_1 \cup \cdots \cup P_j$.  
    Let $v\in V(C)\setminus X_{\ref{lem:sparsealgo}}$, $u$ be an internal vertex of $P_j$ for some $j\in [\ell]$, and 
    $R=v_0v_1 \cdots v_s$ be an $\h_j$-path with $v_0=v$ and $v_s=u$.
    Then $s\ge 6$.
    \end{lemma}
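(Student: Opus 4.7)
Proof plan: Suppose for contradiction that $s \le 5$. We may take $R$ to be a shortest $\h_j$-path from $v$ to $u$, so $R$ is induced in $G$. Write $P_j = p_0 p_1 \cdots p_m$ with $p_0 \in V(C)$ and $u = p_t$ for some $t \in [m-1]$. Consider the two candidate $\h_{j-1}$-paths
\[
P_a := p_0 p_1 \cdots p_t v_{s-1} \cdots v_1 v, \qquad P_b := p_m p_{m-1} \cdots p_t v_{s-1} \cdots v_1 v,
\]
of lengths $t+s$ and $(m-t)+s$ respectively. Both have $v \in V(C)$ as an endpoint. Since $v \notin X_{\ref{lem:sparsealgo}} \supseteq B$, the vertex $v$ is not a branching point of $\h$, so $\deg_{\h_{j-1}}(v)=2$; the endpoints $p_0$ and $p_m$ have degree at most $2$ in $\h_{j-1}$ because $\h_j = \h_{j-1}\cup P_j$ has maximum degree $3$. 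Hence $\h_{j-1}\cup P_a$ and $\h_{j-1}\cup P_b$ have maximum degree $3$. Their internal vertices lie in $V(P_j)\setminus V(\h_{j-1})$ (the $p_i$'s) or in $V(G)\setminus V(\h_j)$ (the $v_i$'s), so both are genuinely $\h_{j-1}$-paths.

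The strategy is to show that one of $P_a$, $P_b$ is a valid sparse-ear extension of $\h_{j-1}$ that is strictly better than $P_j$ under the goodness conditions (2)--(3), contradicting the choice of $P_j$. If $p_m \notin V(C)$, then $|V(P_a) \cap V(C)| = 2 > 1 = |V(P_j) \cap V(C)|$, contradicting condition (2). If $p_m \in V(C)$, then $|V(P_j) \cap V(C)| = |V(P_a) \cap V(C)| = |V(P_b) \cap V(C)| = 2$, and since $|P_a| + |P_b| = m + 2s \le m + 10$, the shorter of the two has length at most $\lfloor m/2\rfloor + 5$, which is $< m$ as soon as $m \ge 11$, contradicting (3). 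The residual case $6 \le m \le 10$ with $p_m \in V(C)$ is handled by noting that then $P_j$ is a short path between two vertices of $C$, and the cycle $p_m P_j u R v Q p_m$ (where $Q$ is an appropriately chosen subpath of $C$ from $v$ to $p_m$ avoiding $p_0$) can be shown to contain a long hole strictly shorter than $C$, contradicting the minimality of $C$ as a shortest long hole.

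The main obstacle is verifying the remaining sparse-ear conditions for the candidate extension: length $\ge 6$, $P_a-\{p_0,v\}$ (resp.~$P_b-\{p_m,v\}$) is induced in $G$, and positions $2, 3, 4$ from the $C$-end have no neighbor in $\h_{j-1}$. We orient the candidate with whichever of $p_0$, $p_m$ has its $P_j$-side of length at least $4$ placed at position $0$ (one such orientation exists unless $m \le 7$, which falls into the residual case above). Then positions $2, 3, 4$ are $p_2, p_3, p_4$ of $P_j$, inheriting the no-$\h_{j-1}$-neighbor property from the sparse-ear condition on $P_j$. The induced-path condition follows because $P_j - \{p_0, p_m\}$ and $R$ are each induced, and any chord between $\{p_1,\dots,p_{t-1}\}$ (resp.~$\{p_{t+1},\dots,p_{m-1}\}$) and $\{v_1,\dots,v_{s-1}\}$ would, together with a subpath of $P_j$ and a subpath of $R$, yield a strictly shorter $\h_j$-path from $v$ to $u$, contradicting the minimality of $R$.
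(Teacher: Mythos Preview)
Your overall strategy---build a competitor ear from $P_j$ and $R$ and contradict goodness---is the same as the paper's, but the execution has a real gap at the step you flag as the ``main obstacle.''

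The sparse-ear condition (iii) on $P_j$ only guarantees that $p_2,p_3,p_4$ (counted from the $p_0$ end) have no neighbour in $\h_{j-1}$; it says nothing about $p_{m-2},p_{m-3},p_{m-4}$. So when $t<4$ and you switch to $P_b$ oriented from $p_m$, your sentence ``positions $2,3,4$ are $p_2,p_3,p_4$ of $P_j$, inheriting the no-$\h_{j-1}$-neighbour property'' is simply false: the relevant positions are $p_{m-2},p_{m-3},p_{m-4}$, and these may well have neighbours in $\h_{j-1}$. Hence $P_b$ need not be a valid extension, and no contradiction to goodness follows. The same problem blocks your $p_m\notin V(C)$ case whenever $t<4$, since there too you must certify that $P_a$ extends $\h_{j-1}$ before invoking condition~(2).

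This is exactly where the hypothesis $v\notin X_{\ref{lem:sparsealgo}}$ is needed, and your argument never uses it beyond ``$v$ is not a branching point.'' The paper's proof dispatches the case $t\le 4$ first: since $N_C^{31}[B\cap V(C)]\subseteq X_{\ref{lem:sparsealgo}}$ and $p_0\in B\cap V(C)$, one has $\dist_C(v,p_0)\ge 32$; but a neighbour of $v_{s-1}$ among $p_1,\dots,p_4$ yields a path of length at most $9$ in $G-D$ between $v$ and $p_0$, contradicting Lemma~\ref{lem:distancebasic3}. With $t\ge 5$ secured, only $P_a$ oriented from $p_0$ is ever needed, and positions $2,3,4$ really are $p_2,p_3,p_4$. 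The same distance bound (now to $p_m\in B\cap V(C)$) also replaces your hand-wavy residual case $m\le 10$: if $P_a$ is not shorter than $P_j$ then $m-t\le s$, giving a path of length at most $2s\le 10$ from $v$ to $p_m$, again contradicting Lemma~\ref{lem:distancebasic3}.

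A smaller point: your induced-path check does not cover chords $v_{s-1}p_i$ for $i<t$ (minimality of $R$ to the fixed $u$ does not forbid these). The paper handles this by taking $t$ to be the \emph{minimum} index with $v_{s-1}p_t\in E(G)$, which makes $P_a-\{p_0,v\}$ induced automatically.
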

    
    \begin{proof}
    Let $R=v_0v_1 \cdots v_s$ be a counterexample with $s$ minimum.  
    By the minimality of $s$, 
    if $s\ge 2$, then there is no edge between $\{v_i:i\in \{0\}\cup [s-2]\}$ and the set of internal vertices of $P_j$. 
    Let $P_j=u_0u_1 \cdots u_p$, where $u_0 \in V(C)$ and  
    no vertex in $\{u_2, u_3, u_4\}$ has a neighbor in $C\cup P_1\cup \cdots \cup P_{j-1}$.
    
    Since we added the set $N^{\buff}_C[B\cap V(C)]$ to $X_{\ref{lem:sparsealgo}}$, we have $\dist_C(v_0, B\cap V(C))\ge 32$.
    Let $t$ be the minimum integer such that $v_{s-1}$ is adjacent to $u_{t}$. 

	First assume that $v_{s-1}$ has a neighbor in $\{u_1, u_2, u_3, u_4\}$.
	Then there is a $C$-path from $v_0$ to $u_0$ of length at most $(s-1)+5\le 9$.
	By Lemma~\ref{lem:distancebasic3}, $\dist_C(v_0, u_0)<28$, which contradicts $\dist_C(v_0, B\cap V(C))\ge 32$.
	So, $v_{s-1}$ has no neighbor in $\{u_1, u_2, u_3, u_4\}$.  Therefore
	\[Q:=u_0P_ju_tv_{s-1}Rv_0\] is a path extending $\h_{j-1}$ with $|V(Q) \cap V(C)|=2$.  Since $\h$ is good, this implies $u_p \in V(C)$.
	
	If $p-t>s$, then $Q$ is shorter than $P_j$, a contradiction.
	So, $p-t\le s$, and thus, there is a $C$-path of length at most $s+(p-t)\le 2s\le 10$ from $v_0$ to $u_p$.
	By Lemma~\ref{lem:distancebasic3}, $\dist_C(v_0, u_p)<32$, which contradicts $\dist_C(v_0, B\cap V(C))\ge 32$.
    We conclude that $s\ge 6$, as required.
    \end{proof}
    
       
    Before we proceed further, we deal with cycles with some additional constraint similar to extensions for $\h$, that intersect $C$ on exactly one vertex.
    A cycle $F=v_0v_1v_2 \cdots v_nv_0$ with $n\ge 5$ is an \emph{appendage} of $\h$ if 
    \begin{itemize}
    \item $v_0\in V(C)\setminus B$ and $V(F)\cap V(\h)=\{v_0\}$,
    \item $v_1v_2 \cdots v_n$ is an induced path of $G$, and
    \item no vertex in $\{v_2,v_3,v_4\}$ has a neighbor in $\h$.
    \end{itemize}
    The vertex $v_0$ is called the \emph{tip} of this appendage. 
    Clearly, an appendage contains a long hole, 
     because $v_1v_2v_3v_4v_5$ is an induced path and there are no edges between $\{v_2, v_3, v_4\}$ and $V(F)\setminus \{v_i: i\in [5]\}$.
    We now show that if there are two appendages of $\h$ whose tips have distance at least $20$ in $C$, 
    then they are vertex-disjoint.
    
    \begin{lemma}\label{lem:narrow}
    Let $F_1$ and $F_2$ be appendages of $\h$ with tips $x_1$ and $x_2$, respectively.
    If $\dist_C(x_1, x_2)\ge 20$, then $V(F_1)\cap V(F_2)=\emptyset$.
    \end{lemma}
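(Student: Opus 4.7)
I plan to argue by contradiction. Suppose $w \in V(F_1)\cap V(F_2)$. Since $V(F_i)\cap V(\h)=\{x_i\}$ and $x_1\ne x_2$ (forced by $\dist_C(x_1,x_2)\ge 20$), we have $w\notin V(\h)$, hence $w\notin V(C)\cup\{x_1,x_2\}$. Writing $w=v_j^1=v_k^2$, let $a(w):=\min(j,n_1+1-j)$ and $b(w):=\min(k,n_2+1-k)$ denote the cycle-distances from $w$ to the tips in $F_1$ and $F_2$. The idea is to use $w$ to build an $(x_1,x_2)$-path $P$ that either lies in $G-D$ and is too short to coexist with $\dist_C(x_1,x_2)\ge 20$ (violating Lemma~\ref{lem:distancebasic3}), or is an $\h$-path extending $\h$ (violating the maximality of $\h$).

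I split on whether every $w\in V(F_1)\cap V(F_2)$ satisfies $a(w)\le 3$ and $b(w)\le 3$, or some $w$ has $a(w)\ge 4$ or $b(w)\ge 4$. In the first (short) case, pick any $w$ and concatenate the shorter arc in each appendage to obtain an $(x_1,x_2)$-path $P$ in $G[V(F_1)\cup V(F_2)]$ of length at most $a(w)+b(w)\le 6$; its interior lies in $V(F_1)\cup V(F_2)\setminus V(\h)$, so $P$ is a $C$-path. I then verify that no internal vertex of $P$ is in $D$: the vertices at positions $2$ and $3$ in each appendage are excluded from $D$ by the appendage condition (they have no neighbor in $\h\supseteq V(C)$), and the only potentially troublesome vertices $v_1^i$ are ruled out by combining the almost-$C$-dominating definition with Lemma~\ref{lem:cdominating} and the distance hypothesis $\dist_C(x_1,x_2)\ge 20$. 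Given $P\subseteq G-D$, Lemma~\ref{lem:distancebasic3} applied with $m=5$ forces $P$ to have length at least $8$, contradicting the bound of $6$.

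In the second (long) case, by symmetry assume $a(w)\ge 4$ for some $w$, so both $F_1$-arcs from $x_1$ to $w$ have length at least $4$. Choose $w$ to minimize the forward index $j$ in $F_1$ subject to $a(w)\ge 4$; set $Q_1:=x_1 v_1^1 v_2^1\cdots v_j^1$ and let $Q_2$ be a shortest $(w,x_2)$-arc in $F_2$. The minimality of $j$ together with the condition $a(w)\ge 4$ forces $V(Q_1)\cap V(Q_2)=\{w\}$, so $P:=Q_1 Q_2$ is a path. I check that $P$ has length at least $6$, that its first three internal vertices $v_2^1,v_3^1,v_4^1$ have no neighbor in $\h$ (by the appendage condition), that its interior is induced (using inducedness of the $F_i-x_i$ paths together with the appendage no-neighbor constraint and the $C_4$-freeness of $G$ to rule out cross-chords), and that the endpoints $x_1,x_2\in V(C)\setminus B$ have degree $2$ in $\h$. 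Thus $P$ extends $\h$, contradicting maximality.

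The main obstacle is the bookkeeping in the short case, specifically handling the possibility that $v_1^1$ or $v_1^2$ lies in $D$: if either does, then $P$ might traverse $D$ and Lemma~\ref{lem:distancebasic3} does not apply directly. Addressing this requires showing that membership of $v_1^i$ in $D$ forces further common structure between $F_1$ and $F_2$ (via $D$ being a clique from Lemma~\ref{lem:cdominating}) which can be leveraged either to reroute $P$ around $D$ or to produce a hole shorter than $C$ using neighbors of $v_1^i$ on $C$.
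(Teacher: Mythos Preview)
Your Case~2 has a genuine gap: the interior of $P=Q_1Q_2$ need not be induced. Your minimality is taken over \emph{intersection} vertices $w\in V(F_1)\cap V(F_2)$ with $a(w)\ge 4$, but this gives no control over \emph{edges} between $Q_1-w$ and $Q_2-w$. A vertex $v_{j'}^1$ with $j'<j$ may well be adjacent to some internal vertex of $Q_2$ without itself lying in $V(F_2)$, and none of the tools you cite rule this out: inducedness of each $F_i-x_i$ forbids only chords within one appendage; the appendage no-neighbour constraint concerns neighbours in $\h$, whereas $V(Q_2)\setminus\{x_2\}$ is disjoint from $V(\h)$; and a single cross-chord produces no $C_4$. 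The same defect undermines your claim $V(Q_1)\cap V(Q_2)=\{w\}$, since a vertex $v_{j'}^1$ with $j'\le 3$ (hence $a(v_{j'}^1)\le 3$) could sit on $Q_2$ without violating your choice of $w$.

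The paper sidesteps the whole case split by working with first \emph{adjacency} rather than intersection: take the least $i$ such that $v_i^1$ has a neighbour in $F_2$, then the least $j$ with $v_i^1v_j^2\in E(G)$. The double minimality makes $v_1^1\cdots v_i^1v_j^2\cdots v_1^2$ induced automatically, and Lemma~\ref{lem:distancebasic3} with $m=5$ forces $i+j+1\ge 8$; hence $i\ge 4$ or $j\ge 4$ and the path extends $\h$ directly. Your $D$-obstacle also dissolves uniformly once you note that no internal vertex $v_t^1$ of an appendage lies in $D$: for $t\in\{2,3,4\}$ this is immediate (no neighbour in $C$), and for $t=1$ or $t\ge 5$ one picks $c\in(N_G(v_t^1)\cap V(C))\setminus(B\cup\{x_1\})$ (nonempty since an almost $C$-dominating vertex has many $C$-neighbours) and observes that $cv_1^1\cdots v_{n_1}^1x_1$, respectively $x_1v_1^1\cdots v_t^1c$, is already an $\h$-path extending $\h$.
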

    \begin{proof}
    Suppose $\dist_C(x_1, x_2)\ge 20$.
    Let $F_1=v_0v_1v_2 \cdots v_{m_1}v_0$ and $F_2=w_0w_1w_2 \cdots w_{m_2}w_0$ with $x_1=v_0$ and $x_2=w_0$
    such that no vertex in $\{v_2, v_3, v_4\}$ has a neighbor in $V(F_1)\setminus \{v_2, v_3, v_4\}$, and 
    no vertex in  $\{w_2, w_3, w_4\}$ has a neighbor in $V(F_2)\setminus \{w_2, w_3, w_4\}$.

    Towards a contradiction, suppose that $V(F_1)\cap V(F_2)\neq \emptyset$.
    We choose a minimum integer $i$ such that $v_i$ has a neighbor in $F_2$.
    Let $j$ be the minimum integer such that $v_i$ is adjacent to $w_j$.
    Then $X=v_0v_1 \cdots v_iw_jw_{j-1} \cdots w_0$ is a $C$-path of length $i+j+1$, 
    and $X-\{v_0, w_0\}$ is an induced path.
 
    Since $\dist_C(x_1, x_2)\ge 20$, by Lemma~\ref{lem:distancebasic3}, 
    we have $i+j+1\ge 8$, and $i$ or $j$ must be at least $4$. 
    So, $X$ includes one of $\{v_2, v_3, v_4\}$ or $\{w_2, w_3, w_4\}$, 
    which implies that $X$ is an $\h$-path extending $\h$.
    This contradicts the maximality of $\h$.
    \end{proof}
    Using Lemma~\ref{lem:narrow}, we can find either $k$ vertex-disjoint appendages or a set hitting all appendages.

	\begin{lemma}\label{lem:narrowalgo}
	In polynomial time, we can find either $k$ vertex-disjoint appendages of $\h$ or  a vertex set $X_{\ref{lem:narrowalgo}}$ of size at most $20k$ hitting all appendages.
	\end{lemma}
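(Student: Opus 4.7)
The plan is a greedy packing argument following the template of Lemma~\ref{lem:sparsealgo}, now applied to appendages and invoking Lemma~\ref{lem:narrow}.

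First, I describe a polynomial-time appendage-detection subroutine. For a fixed candidate tip $v_0 \in V(C) \setminus B$, enumerate $5$-tuples $(v_1, v_2, v_3, v_4, v_n)$ with $v_1, v_n \in N_G(v_0) \setminus V(\h)$ and $v_1 v_2 v_3 v_4$ an induced path in $G - V(\h)$ such that $\{v_2, v_3, v_4\} \cap N_G[\h] = \emptyset$. For each valid tuple I compute a shortest $(v_4, v_n)$-path $Q$ in $G - ((V(\h) \cup N_G[\{v_0, v_1, v_2, v_3\}]) \setminus \{v_4, v_n\})$; if $Q$ has length at least $1$, then the cycle $v_0 v_1 v_2 v_3 v_4 Q v_n v_0$ is an appendage with tip $v_0$ (the induced path property of $v_1 \cdots v_n$ follows from $Q$ being shortest). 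This decides in polynomial time whether an appendage with tip $v_0$ exists.

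Next, I run the greedy loop. Fix a cyclic ordering of $V(C)$. Initialize $\mathcal{F} := \emptyset$ and $X := \emptyset$. In iteration $1$, scan $v_0 \in V(C) \setminus B$ in cyclic order and apply the subroutine; let $t_1$ be the first such $v_0$ for which an appendage $F_1$ exists (halting if none does). Add $F_1$ to $\mathcal{F}$ and set $X := N_C^{19}[t_1]$. In iteration $i \geq 2$, scan $v_0 \in V(C) \setminus (X \cup B)$ in cyclic order starting from position $t_{i-1}+20$, apply the subroutine, and let $t_i$ be the first such $v_0$ with an appendage $F_i$. Add $F_i$ to $\mathcal{F}$ and add $\{t_i, t_i+1, \ldots, t_i+19\}$ to $X$. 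Halt when $|\mathcal{F}|=k$ or when no such $v_0$ yields an appendage.

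For correctness: if $|\mathcal{F}|=k$, the tips $t_1, t_2, \ldots, t_k$ are picked in forward cyclic order with consecutive forward distance at least $20$, and all lie in the arc $V(C) \setminus N_C^{19}[t_1]$, so $t_k - t_1 \leq |V(C)|-20$. Since $|V(C)| > \mu_k \geq 20k+20$, every pairwise cyclic distance among the tips is at least $20$, and Lemma~\ref{lem:narrow} gives that the $k$ appendages are pairwise vertex-disjoint. If $|\mathcal{F}|<k$, then every appendage has its tip in $X$: tips in $N_C^{19}[t_1]$ lie in $X$ directly, while any tip in the arc $V(C) \setminus N_C^{19}[t_1]$ outside $X$ would have been picked during some scan. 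Since each appendage meets $V(C)$ precisely at its tip, $X$ hits every appendage, and $|X| \leq 39 + 20(|\mathcal{F}|-1) \leq 20k-1 < 20k$. The main obstacle is the tight $20k$ bound: this requires asymmetric treatment of the first tip (a symmetric radius-$19$ neighborhood to anchor the wrap-around buffer) versus subsequent tips (one-sided forward $20$-interval), combined with scanning in strict cyclic forward order from the most recently picked tip.
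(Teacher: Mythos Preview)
Your argument is essentially correct, but there is a small gap in the detection subroutine, and the overall route is more elaborate than the paper's.

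\textbf{Subroutine gap.} You do not require $v_n \notin N_G(\{v_1,v_2,v_3\})$ when enumerating tuples. Since $v_n$ is explicitly exempted from the deletion set, nothing prevents $v_n$ from being adjacent to, say, $v_3$ (this is compatible with $C_4$-freeness: $v_0v_1v_2v_3v_nv_0$ can be an induced $C_5$). In that case $v_1\cdots v_n$ is not induced and the output is not an appendage, so your appeal to Lemma~\ref{lem:narrow} in the packing case is unjustified. The fix is trivial---add $v_n \notin N_G(\{v_1,v_2,v_3\})$ to the enumeration, exactly as the paper does---but the sentence ``the induced path property of $v_1\cdots v_n$ follows from $Q$ being shortest'' is false as written.

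\textbf{Comparison with the paper.} The paper takes a simpler route and avoids the greedy packing entirely. It runs the detection subroutine once for every $v\in V(C)\setminus B$ and sets $X_{\ref{lem:narrowalgo}}$ to be the set of \emph{all} vertices that are tips of some appendage. Since an appendage meets $C$ only at its tip, this is automatically a hitting set. If $|X_{\ref{lem:narrowalgo}}|>20k$, a one-line pigeonhole on the cycle extracts a subset $N$ of $k$ tips that are pairwise at $C$-distance at least $20$, and Lemma~\ref{lem:narrow} finishes. Your greedy scheme reaches the same $20k$ bound, but the asymmetric treatment of $t_1$, the forward-only intervals, and the wrap-around analysis are all bookkeeping that the paper's argument sidesteps; the invocation of $|V(C)|>\mu_k\ge 20k+20$ is also unnecessary, since the bound $t_k-t_1\le |V(C)|-20$ already forces every pairwise cyclic distance to be at least $20$.
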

	\begin{proof}
	Initialize $S=\emptyset$.
	For each $v\in V(C)\setminus B$, we test whether there is an appendage with tip $v$. 
	For this, we guess a path $p_mp_0p_1 \cdots p_4$ in $G-(V(\h)\setminus \{v\})$ 
	such that $p_m$ has no neighbor in $\{p_1, \ldots, p_4\}$, $p_1p_2p_3p_4$ is induced, and every vertex in $\{p_2, p_3, p_4\}$ has no neighbor in $\h$.
	For such a path, 
	we test whether there is a path from $p_m$ to $p_4$ in 
	\[G-  ((V(\h)\cup N_G[\{p_1, p_2, p_3\}]) \setminus \{p_4\}).\]
	If there is a path, then by finding a shortest path, 
	we can output an appendage with tip $v$.
	Also, in that case, we add $v$ to $S$.  
	
	Let $X_{\ref{lem:narrowalgo}}$ be the final set $S$. By construction, $X_{\ref{lem:narrowalgo}}$ hits all appendages.  Therefore, we may assume
   $\abs{X_{\ref{lem:narrowalgo}}}> 20k$.  Since $\abs{X_{\ref{lem:narrowalgo}}} > 20k$, there exists a subset $N$ of $X_{\ref{lem:narrowalgo}}$ such that $|N|=k$ and $\dist_C(x_1,x_2) \geq 20$ for all distinct $x_1,x_2 \in N$.  
	Let $\mathcal F$ be the set of appendages output by the algorithm with a tip in $N$.  
	By Lemma~\ref{lem:narrow}, the appendages in  $\mathcal F$  are vertex-disjoint, so we can output $k$ vertex-disjoint appendages, as required.
	\end{proof}

    \section{Hitting long holes}\label{sec:avoiding}
    In the previous sections, we have defined $X_{\ref{lem:greedypacking}}, X_{\ref{lem:sparsealgo}}$, and $X_{\ref{lem:narrowalgo}}$. 
    Let
    \[X_{\ref{lem:tunnellemma4}}:=X_{\ref{lem:greedypacking}} \cup X_{\ref{lem:sparsealgo}}\cup X_{\ref{lem:narrowalgo}}.\]
    In this section, we complete the proof of Theorem~\ref{thm:main2} by either finding $k$ vertex-disjoint long holes of $G$ or sets $X_{\ref{prop:Davoid}}$ and $X_{\ref{prop:traversing}}$ such that $G- (X_{\ref{lem:tunnellemma4}}\cup X_{\ref{prop:Davoid}}\cup X_{\ref{prop:traversing}})$ has no long holes and $|X_{\ref{lem:tunnellemma4}}\cup X_{\ref{prop:Davoid}}\cup X_{\ref{prop:traversing}}|=\mathcal{O}(k \log k)$.  
    
	We divide long holes into two types, depending on whether a long hole intersects $D$ or not.
	We say that a long hole is \emph{$D$-avoiding} if it contains no vertex of $D$, 
	and \emph{$D$-traversing}, otherwise.
	We first consider $D$-avoiding holes in Subsection~\ref{subsec:avoding}, and consider $D$-traversing holes in Subsection~\ref{subsec:traversing}.
	
	Before doing so, we require a few more definitions related to the cycle $C$. For these definitions, we regard $C:=a_0a_1a_2 \cdots a_ma_0$, as a directed cycle where $a_0$ is directed towards $a_1$.
	For a subpath $Q$ of $C$, let $\bd_L(Q)$ and $\bd_R(Q)$ be the ends of $Q$ such that 
	$Q$ is directed from $\bd_L(Q)$ to $\bd_R(Q)$. 
	For an integer $i\ge 2$, 
	let $\bd^i_L(Q):=N^{i-1}_Q[ \bd_L(Q)]$, 
	$\bd^i_R(Q):=N^{i-1}_Q[ \bd_R(Q)]$, $\bd^i(Q):=\bd^i_L(Q)\cup \bd^i_R(Q)$ and
	$\interior^i(Q):=V(Q)\setminus \bd^i(Q)$.

   The following two lemmas will be useful to find a long hole.
       \begin{lemma}\label{lem:tunnellemma4}
    Let $Q$ be a path on more than $160$ vertices in $C-X_{\ref{lem:tunnellemma4}}$, and let $Q^*$ be a subpath of $Q$ on $20$ vertices.
    If $P$ is a path from $\s^{3}_{\bd^{20}_L(Q)}$ to $\s^{3}_{\bd^{20}_R(Q)}$ contained in $G[\s^{3}_{Q}]$, then $P$ intersects $\s^3_{Q^*}\setminus \s^3_{Q-V(Q^*)}$.
    \end{lemma}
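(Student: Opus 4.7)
The plan is to argue by contradiction: suppose $V(P) \cap (\s^3_{Q^*} \setminus \s^3_{Q-V(Q^*)}) = \emptyset$. Since $V(P) \subseteq \s^3_Q = \s^3_{Q^*} \cup \s^3_{Q-V(Q^*)}$, this forces $V(P) \subseteq \s^3_{Q-V(Q^*)}$. Let $Q_L$ and $Q_R$ be the two subpaths of $Q - V(Q^*)$ containing $\bd_L(Q)$ and $\bd_R(Q)$ respectively, so $V(P) \subseteq \s^3_{Q_L} \cup \s^3_{Q_R}$, and the $20$-vertex segment $Q^*$ ensures $\dist_C(V(Q_L), V(Q_R)) \geq 21$.

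The central step is to prove that $\s^3_{Q_L}$ and $\s^3_{Q_R}$ are disjoint and have no edge of $G$ between them. A common vertex $w \in \s^3_{Q_L} \cap \s^3_{Q_R}$ would witness vertices $v_L \in V(Q_L)$, $v_R \in V(Q_R)$ together with paths in $G-D$ of length at most $3$ from $w$ to $v_L$ and from $w$ to $v_R$ avoiding all other $C$-vertices; gluing these together and shortening yields a $C$-path in $G-D$ from $v_L$ to $v_R$ of length at most $6$. Since $\dist_C(v_L, v_R) \geq 21 \geq 4 \cdot 5$, Lemma~\ref{lem:distancebasic2} forces any such $C$-path to have length at least $8$, a contradiction. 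The same argument applied to a hypothetical edge $uv \in E(G)$ with $u \in \s^3_{Q_L}$ and $v \in \s^3_{Q_R}$ (both of which lie outside $D$) produces a $C$-path in $G-D$ of length at most $7$, again contradicting Lemma~\ref{lem:distancebasic2}. By the connectedness of $P$, this forces $V(P) \subseteq \s^3_{Q_L}$ or $V(P) \subseteq \s^3_{Q_R}$.

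To finish, I would take the endpoint of $P$ on the opposite side: if $V(P) \subseteq \s^3_{Q_L}$ then the right endpoint $b \in \s^3_{\bd^{20}_R(Q)} \cap \s^3_{Q_L}$. Using $|V(Q)| > 160$ and $|V(Q^*)| = 20$ to ensure that $\bd^{20}_R(Q) \subseteq V(Q_R)$ (after swapping the roles of $Q_L, Q_R$ if needed), this yields $b \in \s^3_{Q_R} \cap \s^3_{Q_L}$, contradicting the disjointness just established. The main obstacle I expect is the boundary case in which $Q^*$ is very close to one end of $Q$, so that one of $Q_L, Q_R$ is empty or $\bd^{20}_R(Q)$ partially overlaps $V(Q^*)$. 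In that regime one has to exploit the fact that $V(Q^*) \subseteq \s^3_{Q^*} \setminus \s^3_{Q-V(Q^*)}$ (because the $\s^r_X$ operator picks up from $V(C)$ only the set $X$ itself) and argue that $P$ is forced to visit a $C$-vertex of $V(Q^*)$, possibly invoking the minimality of $C$ as a shortest long hole to rule out very short shortcut paths near the boundary of $Q$.
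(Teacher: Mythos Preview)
Your approach is the paper's: assume $V(P)\subseteq\s^3_{Q-V(Q^*)}=\s^3_{Q_L}\cup\s^3_{Q_R}$, use the cycle--distance lemma to show these two pieces share no vertex and no edge, deduce that $P$ lies entirely on one side, and contradict the location of the far endpoint. The paper quotes Lemma~\ref{lem:distancebasic3} where you quote Lemma~\ref{lem:distancebasic2}; since the glued walks you build are genuine $C$-paths, either lemma applies.

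Your boundary worry is not a technicality to be patched but a real defect in the lemma \emph{as stated}. The paper writes ``Thus, we may assume that $Q-V(Q^*)$ has two distinct connected components'' and then asserts $p_1\in\s^3_{Q_1}\setminus\s^3_{Q^*}$, neither of which is justified when $Q^*$ meets $\bd^{20}(Q)$. In fact the conclusion is false there: take $G$ to be $C$ together with one extra vertex $p$ adjacent to $q_{20}$ and $q_{21}$, and set $V(Q^*)=\bd^{20}_L(Q)$; then $P=p\,q_{21}q_{22}\cdots q_{|V(Q)|}$ is a path from $\s^3_{\bd^{20}_L(Q)}$ to $\s^3_{\bd^{20}_R(Q)}$ inside $G[\s^3_Q]$, yet $p\in\s^3_{q_{21}}$ and every $q_j$ with $j\ge 21$ lies in $\s^3_{q_j}$, so $V(P)\subseteq\s^3_{Q-V(Q^*)}$. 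The fix you sketch (forcing $P$ through a $C$-vertex of $Q^*$) does not work here; the correct repair is simply to add the hypothesis $V(Q^*)\cap\bd^{20}(Q)=\emptyset$. That forces $\bd^{20}_L(Q)\subseteq V(Q_L)$ and $\bd^{20}_R(Q)\subseteq V(Q_R)$, after which your argument goes through verbatim. Every invocation of the lemma in the paper (in Lemma~\ref{lem:tunnellemma3} and Proposition~\ref{prop:Davoid}) chooses $Q^*$ at distance well over $20$ from both ends of $Q$, so nothing downstream is affected.
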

    \begin{proof}
    By assumption, $P$ intersects $\s^{3}_{\bd^{20}_L(Q)}$ and $\s^{3}_{\bd^{20}_R(Q)}$.
    Thus, we may assume that $Q-V(Q^*)$ has two distinct connected components. Let $Q_1$ and $Q_2$ be the connected components of $Q-V(Q^*)$, where $\bd_L(Q)\in V(Q_1)$.
    Let $p_1$ and $p_2$ be the ends of $P$ such that $p_1\in \s^3_{\bd^{20}_L(Q)}$.
     
    Suppose for a contradiction that $P$ does not contain a vertex of $\s^3_{Q^*}\setminus \s^3_{Q-V(Q^*)}$.
    Observe that $p_1\in \s^3_{Q_1}\setminus \s^3_{Q^*}$ and $p_2\in \s^3_{Q_2}\setminus \s^3_{Q^*}$, and by Lemma~\ref{lem:distancebasic3}, 
    $p_1\notin \s^3_{Q_2}\setminus \s^3_{Q^*}$ and $p_2\notin \s^3_{Q_1}\setminus \s^3_{Q^*}$.
    Thus, there is an edge $uv$ of $P$ where $u\in \s^3_{Q_1}\setminus \s^3_{Q^*}$ and $v\in \s^3_{Q_2}\setminus \s^3_{Q^*}$. But since $\dist_C(V(Q_1), V(Q_2))\ge 20$, this is not possible by Lemma~\ref{lem:distancebasic3}.
        \end{proof}
       \begin{lemma}\label{lem:tunnellemma5}
    Let $Q$ be a path on $60$ vertices in $C-X_{\ref{lem:tunnellemma4}}$.
    If $P_1$ and $P_2$ are two vertex-disjoint paths from $\s^{3}_{\bd^{20}_L(Q)}$ to $\s^{3}_{\bd^{20}_R(Q)}$ contained in $G[\s^{3}_{Q}]-V(Q)$ such that there are no edges between $P_1$ and $P_2$, then $\s^3_{Q}$ contains a long hole.
    \end{lemma}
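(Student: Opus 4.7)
The plan is to extend $P_1$ and $P_2$ by short paths to $C$, assemble them together with two subpaths of $Q$ into a cycle $F$ of length at least $16$, and then extract a long hole via Lemma~\ref{lem:detecting}.

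Without loss of generality I choose $P_1,P_2$ so that $|V(P_1)|+|V(P_2)|$ is minimum; then each $P_i$ is induced. For each $i\in[2]$, let $a_i,b_i$ be the endpoints of $P_i$ with $a_i\in\s^3_{\bd^{20}_L(Q)}$ and $b_i\in\s^3_{\bd^{20}_R(Q)}$. By the definition of $\s^3$, there exist $x_i\in\bd^{20}_L(Q)$, $y_i\in\bd^{20}_R(Q)$, and shortest paths $L_i$ from $a_i$ to $x_i$ and $R_i$ from $b_i$ to $y_i$, each of length at most $3$, contained in $G-(D\cup(V(C)\setminus\{x_i\}))$ and $G-(D\cup(V(C)\setminus\{y_i\}))$ respectively. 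Setting $P_i^*:=L_i\cup P_i\cup R_i$, each $P_i^*$ is a $C$-path in $G-D$ from $x_i$ to $y_i$; since $\dist_C(x_i,y_i)\ge 21$, Lemma~\ref{lem:distancebasic2} with $m=5$ yields $|P_i^*|\ge 8$.

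I then form the cycle $F$ consisting of $P_1^*$, the subpath of $Q$ between $y_1$ and $y_2$, $P_2^*$ traversed in reverse, and the subpath of $Q$ between $x_2$ and $x_1$. I argue that $F$ is a genuine cycle in $\s^3_Q$ of length at least $16$: the rails $P_1^*,P_2^*$ can be taken vertex-disjoint (after a minor greedy adjustment of the $L_i,R_i$ using the minimality of $|V(P_1)|+|V(P_2)|$), the two $Q$-subpaths lie on disjoint sides of $Q$, and any shared interior vertex or chord between the two sides would contradict either the ``no edges between $P_1,P_2$'' hypothesis or Lemma~\ref{lem:distancebasic3}, since $\bd^{20}_L(Q)$ and $\bd^{20}_R(Q)$ are at $C$-distance at least $21$.

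Finally, I apply Lemma~\ref{lem:detecting}. Pick an induced $5$-vertex subpath $u_1u_2u_3u_4u_5$ of $P_1^*$ whose middle three vertices lie in the interior of $P_1$, well inside $\s^3_{\interior^{20}(Q)}$. The hypothesis that there are no edges between $V(P_1)$ and $V(P_2)$ ensures $\{u_2,u_3,u_4\}$ has no neighbor in $V(P_2)$, and Lemma~\ref{lem:distancebasic3} rules out any neighbor in $V(L_1)\cup V(L_2)\cup V(R_1)\cup V(R_2)\cup(V(Q)\cap V(F))$ other than $u_1,u_5$, since any such neighbor would produce a path in $G-D$ crossing a long stretch of $C$ in too few steps. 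Hence the complementary arc of $F$ is a $(u_1,u_5)$-path in $G-(N_G[\{u_2,u_3,u_4\}]\setminus\{u_1,u_5\})$, and Lemma~\ref{lem:detecting} produces a long hole contained in $\s^3_Q$. The main obstacle is ensuring enough room to select the $5$-vertex induced subpath with genuinely interior middle vertices: the bound $|P_i^*|\ge 8$ only directly forces $|P_1|\ge 2$ in the worst case $|L_1|=|R_1|=3$; in that corner case one must pick the subpath to include portions of $L_1$ or $R_1$ and use Lemma~\ref{lem:distancebasic3} once more to verify that its middle vertices still have no neighbors on the complementary arc of $F$.
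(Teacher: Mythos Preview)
Your overall strategy---connect the two rails to $C$ on each side and find a long hole inside the resulting cycle---is the same as the paper's, but your execution has a real gap precisely in the ``corner case'' you flag at the end.

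The problem is with the claim that Lemma~\ref{lem:distancebasic3} rules out neighbours of $\{u_2,u_3,u_4\}$ on the complementary arc of $F$ when some of these middle vertices come from $L_1$ or $R_1$. Concretely, suppose $|V(P_1)|=3$, say $P_1=a_1m b_1$, and you take $u_2=a_1$, $u_3=m$, $u_4=b_1$. Then $u_2=a_1\in\s^3_{\bd^{20}_L(Q)}$, and the complementary arc of $F$ contains all of $L_2$, whose vertices also lie in $\s^3_{\bd^{20}_L(Q)}$. An edge from $a_1$ to an internal vertex of $L_2$ produces no long path in $G-D$ between far-apart vertices of $C$: both ends are within distance~$3$ of $\bd^{20}_L(Q)$, so Lemma~\ref{lem:distancebasic3} is silent. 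The hypothesis ``no edges between $P_1$ and $P_2$'' does not help either, since $L_2\setminus\{a_2\}$ is not part of $P_2$. The same issue arises for edges between $L_1$ and $L_2$, which also obstructs your claim that $P_1^*$ and $P_2^*$ can be made vertex-disjoint by a ``minor greedy adjustment''; minimality of $|V(P_1)|+|V(P_2)|$ says nothing about how the connector paths $L_i$ interact.

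The paper avoids all of this by a single clean reduction: first trim $P_1,P_2$ so that $|V(P_i)\cap\s^3_{A_j}|=1$ for each $i,j$, where $A_1=\bd^{20}_L(Q)$ and $A_2=\bd^{20}_R(Q)$. Then, instead of dropping separately to $C$ and using two subpaths of $Q$, take a single shortest path $U_j$ from $p_{1,j}$ to $p_{2,j}$ inside $G[V(P_{1,j})\cup V(P_{2,j})\cup A_j]$. The point of the trimming is that every internal vertex of $U_j$ lies in $\s^2_{A_j}$, so any neighbour of such a vertex in $P_1\cup P_2$ would lie in $\s^3_{A_j}$ and hence be one of the endpoints $p_{1,j},p_{2,j}$. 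One then checks directly that $P_1\cup U_1\cup P_2\cup U_2$ is a long hole, with no need for Lemma~\ref{lem:detecting} and no corner case.
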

    \begin{proof}
    Let $A_1:=\bd^{20}_L(Q)$
    and $A_2:=\bd^{20}_R(Q)$.
    We may assume that for each $i, j\in [2]$, $\abs{V(P_i)\cap \s^3_{A_j}}=1$.
    For $i, j\in [2]$, let $p_{i,j}$ be the end of $P_i$ contained in $\s^3_{A_j}$. By Lemma~\ref{lem:distancebasic3}, $p_{i,1}\neq p_{i,2}$ and $P_i$ has length at least $2$. 	
	For each $i, j\in [2]$, let $P_{i,j}$ be a path of length at most $3$ in $\s^3_{A_j}$ from $p_{i,j}$ to $A_j$.

	For each $j\in [2]$, let $U_j$ be a shortest path from $p_{1,j}$ to $p_{2,j}$ in $G[V(P_{1,j}) \cup V(P_{2,j})\cup A_j]$.
    By the choice of $P_1$ and $P_2$, no vertex of $(P_1-p_{1,1})\cup (P_2-p_{2,1})$ is contained in $\s^3_{A_1}$. This implies that no internal vertex of $U_1$ has  a neighbor in $(P_1-p_{1,1})\cup (P_2-p_{2,1})$.
    Similarly, no internal vertex of $U_2$ has  a neighbor in $(P_1-p_{1,2})\cup (P_2-p_{2,2})$.
    Also, by Lemma~\ref{lem:distancebasic3}, there is no edge between $U_1$ and $U_2$.
    Since $P_1$ and $P_2$ have length at least $2$, $U_1\cup U_2\cup P_1\cup P_2$ is a long hole contained in $\s^3_{Q}$.
    \end{proof} 
    

\subsection{$D$-avoiding long holes}\label{subsec:avoding}

   In this section, we show that by taking at most 380 vertices in each connected component of $C-X_{\ref{lem:tunnellemma4}}$, we can hit all  $D$-avoiding long holes.
    
    We begin with the following structural property.
        
    \begin{lemma}\label{lem:longout}
    The graph $G-(X_{\ref{lem:tunnellemma4}} \cup D)$ has no induced path $P:=p_1p_2 \cdots p_\ell$ of length at least $4$ such that 
    \begin{itemize}
        \item $p_1\in \s^2_q$ for some $q\in V(C)\setminus X_{\ref{lem:tunnellemma4}}$,
        \item $p_2,p_3,p_4\notin \s^2_C$,
        \item $\{p_i:i\in [\ell-1]\}\cap V(\h)=\emptyset$, and 
        \item $p_\ell\in V(\h)$.
    \end{itemize}
    \end{lemma}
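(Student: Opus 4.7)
The plan is to suppose such a path $P = p_1 \cdots p_\ell$ exists and to derive a contradiction with the maximality of the sparse ear decomposition $\h$. Among all valid pairs $(P, q)$, I would choose one minimising $\ell$ and then the length of a shortest $(q, p_1)$-path $R$ in $G - (D \cup (V(C) \setminus \{q\}))$; the hypothesis gives $\ell \geq 5$ and $|R| \in \{1, 2\}$ (since $p_1 \notin V(\h) \supseteq V(C)$). Set $P^* := R \cdot P$, a path from $q \in V(C)$ to $p_\ell \in V(\h)$ whose internal vertices avoid $V(C) \cup D$. The goal is to show that $\h \cup P^*$ is a sparse ear decomposition, contradicting the maximality of $\h$.

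Three of the axioms for the extension are straightforward. Neither $q$ nor $p_\ell$ is a branching point of $\h$: $q, p_\ell \notin X_{\ref{lem:tunnellemma4}} \supseteq X_{\ref{lem:sparsealgo}} = N^{\buff}_C[B \cap V(C)] \cup (B \setminus V(C))$ handles both cases, so after adding $P^*$ the maximum degree of $\h$ remains $3$. Moreover, $P^* - \{q, p_\ell\}$ is induced since $P$ and $R$ are both induced paths, and the hypothesis $p_i \notin \s^2_C$ for $i \in \{2,3,4\}$ keeps the internal vertex of $R$ (when $|R|=2$) disjoint from the $p_j$'s.

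The length bound $|P^*| \geq 6$ splits on the location of $p_\ell$. If $p_\ell$ is an internal vertex of some ear $P_j$, then $P^*$ is an $\h_j$-path from $q \in V(C) \setminus X_{\ref{lem:sparsealgo}}$ to $p_\ell$ and Lemma~\ref{lem:extension} yields $|P^*| \geq 6$ directly. If $p_\ell \in V(C)$, then $P^*$ is a $C$-path in $G - D$ of length at most $\ell + 1$; concatenating it with a shortest arc of $C$ between $q$ and $p_\ell$ gives an induced cycle of length at most $(\ell + 1) + \dist_C(q, p_\ell)$, and Lemma~\ref{lem:distancebasic3} combined with $|V(C)| > \mu_k$ forces this cycle to be a long hole strictly shorter than $C$, contradicting the choice of $C$.

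The main obstacle is the sparsity condition that the vertices at positions $2, 3, 4$ of $P^*$ (with $q$ at position $0$) have no neighbours in $V(\h)$. The hypothesis $p_2, p_3, p_4 \notin \s^2_C$ rules out neighbours in $V(C)$, and when $|R|=2$ the secondary minimality of $|R|$ handles $R$'s internal vertex by allowing us to re-base at a closer vertex of $V(C)$. The hard sub-case is a neighbour $w \in V(\h) \setminus V(C)$ of some $p_i$ with $i \in \{2, 3, 4\}$: if $w \notin X_{\ref{lem:tunnellemma4}} \cup D$, then $w$ is an internal vertex of some ear $P_{j'}$ (branching points outside $V(C)$ lie in $X_{\ref{lem:sparsealgo}}$), and applying Lemma~\ref{lem:extension} to the $\h_{j'}$-path $R \cdot p_1 \cdots p_i w$ (for $i \in \{2,3\}$) or invoking the minimality of $\ell$ on the strictly shorter counterexample $p_1 \cdots p_i w$ (for $i = 4$) gives the contradiction; if instead $w \in X_{\ref{lem:tunnellemma4}} \cup D$, a case analysis on why $w$ was removed traces a short path from $p_i$ to $V(C)$ in $G - D$ (via $w$'s ear for the branching-point case, via an almost-dominating edge for the $D$ case), and this contradicts Lemma~\ref{lem:distancebasic3} because $q \notin X_{\ref{lem:sparsealgo}}$ forces $q$ to be at $C$-distance at least $32$ from every branching point of $\h$ lying on $C$.
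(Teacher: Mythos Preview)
Your overall strategy matches the paper's: prepend a length-$2$ path $qq'$ to $P$ and argue that the resulting walk contradicts the maximality of $\h$. However, two genuine gaps remain.

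First, you never consider the possibility $p_\ell = q$. Nothing in the hypotheses excludes this: $q\in V(C)\setminus X_{\ref{lem:tunnellemma4}}$ and $p_\ell\in V(\h)\setminus (X_{\ref{lem:tunnellemma4}}\cup D)$ can coincide. In that case $P^*$ is a closed walk, not an $\h$-path, so the ``$\h\cup P^*$ is a sparse ear decomposition'' argument does not apply. The paper handles this by observing that $P^*$ is then an \emph{appendage} of $\h$ with tip $q$, and since $X_{\ref{lem:narrowalgo}}\subseteq X_{\ref{lem:tunnellemma4}}$ hits all appendage tips while $q\notin X_{\ref{lem:tunnellemma4}}$, one still gets a contradiction.

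Second, your verification that $P^*-\{q,p_\ell\}$ is induced only rules out edges from $q'$ to $p_2,p_3,p_4$; you say nothing about an edge $q'p_j$ with $j\ge 5$. The paper closes this gap by noting that if $q'$ has a neighbour $z$ among $p_2,\ldots,p_{\ell-1}$ with $\dist_P(p_1,z)$ minimal, then $\dist_P(p_1,z)\ge 4$ (since $q'\in\s^1_C$ cannot be adjacent to $p_2,p_3,p_4\notin\s^2_C$), and hence $q'p_1Pzq'$ is a long hole of $G-V(C)$, contradicting the standing assumption on $C$. Separately, your length-bound argument for the case $p_\ell\in V(C)$ is both unnecessary and incorrect as stated: once you observe (as the paper does) that $p_1$ has no neighbour in $C$, you get $|R|=2$ and hence $|P^*|=\ell+1\ge 6$ automatically; the ``shorter long hole'' you try to build is neither obviously induced nor obviously shorter than $C$. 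Likewise, the elaborate case split on $w\in X_{\ref{lem:tunnellemma4}}\cup D$ is not needed: the paper simply notes that a neighbour of $p_2,p_3,p_4$ in $V(\h)\setminus V(C)$ yields an $\h$-path of length $\le 5$ from $q$ to an internal ear vertex, contradicting Lemma~\ref{lem:extension} directly.
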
    
    \begin{proof}
    Suppose for a contradiction that such an induced path $P$ exists.
    Since $p_1\in \s^2_q$ and $p_2\notin \s^2_C$, 
    $p_1$ has no neighbor in $C$.
    Choose a vertex $q'$ in $\s^2_q$ such that $qq'p_1$ is a path.
    
    We claim that no vertex in $\{p_2,p_3,p_4\}$ has a neighbor in $\h$.
    Suppose not. Since $p_2,p_3,p_4$ are not in $\s^1_C$, one of $\{p_2,p_3,p_4\}$ has a neighbor in $V(\h)\setminus V(C)$.
    Thus, there is an $\h$-path from $q$ to a vertex in $V(\h)\setminus V(C)$ that has length at most $5$, which contradicts Lemma~\ref{lem:extension}.

    We observe that 
    $R=qq'p_1Pp_\ell$
    is either an extension or an appendage of $\h$. 
    Assume that $q'$ has a neighbor in $P-\{p_1, p_\ell\}$. 
    Choose a neighbor $z$ in $P-\{p_1, p_\ell\}$ such that $\dist_{P}(p_1, z)$ is minimum.
    As $p_2,p_3,p_4\notin \s^2_C$, $q'$ has no neighbor in $\{p_2, p_3, p_4\}$.
    Thus, $\dist_{P}(p_1, z)\ge 4$. This implies that the cycle $q'p_1Pzq'$ is a long hole in $G-V(C)$, a contradiction.
    Therefore, $q'$ has no neighbor in $P-\{p_1, p_\ell\}$ and in particular, $R-\{q, p_\ell\}$ is induced.
    
    As no vertex in $\{p_2, p_3, p_4\}$ has a neighbor in $\h$, 
    we conclude that $R$ is an extension or an appendage of $\h$.
    This contradicts that $\h$ is maximal and $X_{\ref{lem:tunnellemma4}}$ hits all appendages.
    \end{proof}

        \begin{figure}
  \centering
  \begin{tikzpicture}[scale=0.7]
  \tikzstyle{w}=[circle,draw,fill=black!50,inner sep=0pt,minimum width=4pt]

   \draw (-2,0)--(11,0);
	\draw(-2, 0)--(-3,-0.5);
	\draw(11, 0)--(12,-0.5);
    \draw[dashed](13, -1)--(12,-0.5);
	\draw[dashed](-4,-1)--(-3,-0.5);

    \draw (9, 0) node [w] (v) {};
     \node at (9, -.5) {$v$};
     \node at (2, -.5) {$L_1$};
     \node at (6, -.5) {$L_2$};
     \node at (4.5, -1.7) {$M$};
    \node at (4.5, -2.7) {$Q$};
 
     \node at (-0.7, 0.3) {$0$};
     \node at (-0.7+2, 0.3) {$20$};
     \node at (-0.7+4, 0.3) {$40$};
     \node at (-0.7+6, 0.3) {$60$};
     \node at (-0.7+8, 0.3) {$80$};
    
     \draw (-1, -0.5)--(-1, 3.5);
     \draw (-1.1, -0.5)--(-1.1, 3.5);

     \draw (1, -0.5)--(1, 3.5);
     \draw (3, -0.5)--(3, 3.5);
     \draw (5, -0.5)--(5, 3.5);
     \draw (7, -0.5)--(7, 3.5);

   \draw (1, -1)--(1, -1.4);
   \draw (1, -1.2)--(8, -1.2);

   \draw (-1, -2)--(-1, -2.4);
   \draw (-1.1, -2)--(-1.1, -2.4);
   \draw (-1, -2.2)--(10, -2.2);
   \draw[dashed] (10, -2.2)--(11, -2.2);

    \draw[decorate, decoration={snake}] (-1, 3)--(10, 3);
 	

\draw[rounded corners] (-3-.5+7,4+1)--(-3-.5+7,3+1)--(0-.5+7,3+1)--(0-.5+7,5+1)--(-3-.5+7,5+1)--(-3-.5+7,4+1);
	     \node at (7, 5.5) {$D$};
        \node at (10.5, 1.5) {$P$};
     
    	 \draw (8.2, 1) node [w] (v1) {};
   	 \draw (8.4, 0.5) node [w] (v2) {};

   	 \draw (9, 1) node [w] (v3) {};
   	 \draw (9.5, 2) node [w] (v4) {};
   	 \draw (10, 2) node [w] (v5) {};
   	 \draw (10.5, 2.5) node [w] (v6) {};
   
    	 \draw (2, 1.5) node [w] (w1) {};
    	 \draw (4, 1.5) node [w] (w2) {};
    	 \draw (6, 1.5) node [w] (w3) {};

     	\draw(w1)--(w2)--(w3)--(v1)--(v2)--(v)--(v3)--(v4)--(v5)--(v6);
	\draw[dashed] (w1)--(1.5, 1.8);
        \draw[dashed] (v6)--(11, 2.5);
  \end{tikzpicture}     \caption{The setting in Lemma~\ref{lem:tunnellemma3}. We show that the ends of $P$ have to be contained in  distinct sets of $\s^{3}_{L_1}$ and $\s^{3}_{R_1}$. }\label{fig:tunnel1}
\end{figure}
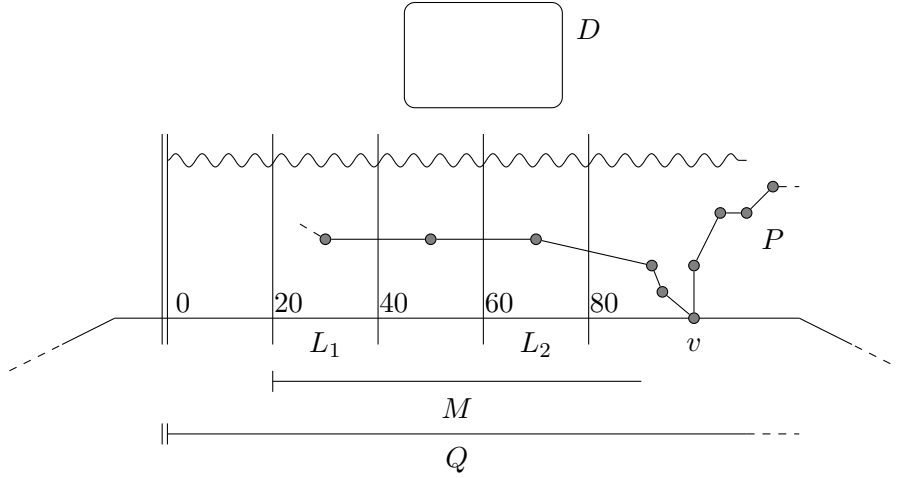
    
    \begin{lemma}\label{lem:tunnellemma3}
    Let $Q$ be a path on more than $160$ vertices in $C-X_{\ref{lem:tunnellemma4}}$.
    Let $H$ be a $D$-avoiding long hole in $G-X_{\ref{lem:tunnellemma4}}$ such that 
  $H$ contains no vertices in $\bd^{80}(Q)$, and it contains a vertex $v$ in $\interior^{80}(Q)$.
    Then the connected component of $H\cap G[\s^{3}_{\interior^{20}(Q)}]$ containing $v$ is a path from $\s^{3}_{\bd^{40}_L(Q)\setminus \bd^{20}_L(Q)}$ to $\s^{3}_{\bd^{40}_R(Q)\setminus \bd^{20}_R(Q)}$.
    \end{lemma}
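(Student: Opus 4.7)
My approach has three stages: show $P$ is a proper subpath of $H$ with $v$ as an interior vertex; show the endpoints of $P$ lie outside $V(C)$; show each endpoint lies in $\s^{3}$ of the prescribed boundary band.

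First, if $V(P) = V(H)$ then $H \subseteq \s^{3}_{\interior^{20}(Q)}$, and since $\interior^{20}(Q)$ is a subpath of $C - X_{\ref{lem:tunnellemma4}} \subseteq C - X_{\ref{lem:greedypacking}}$, this contradicts Lemma~\ref{lem:largesupp}. Hence $P$ is a proper subpath of $H$. Both $H$-neighbors of $v$ lie in $\s^{3}_{\interior^{20}(Q)}$: its $C$-neighbors lie in $\interior^{79}(Q) \subseteq \interior^{20}(Q)$, and any non-$C$ neighbor lies in $\s^{1}_v \subseteq \s^{3}_{\interior^{20}(Q)}$ (using $v \notin D$ as $H$ is $D$-avoiding). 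So $v$ is an interior vertex of $P$; let $p_1, p_\ell$ be its endpoints. Next, if $p_1 \in V(C)$ then $p_1 \in V(C)\cap \s^3_{\interior^{20}(Q)} = \interior^{20}(Q)$, and the $H$-neighbor $p'_1$ of $p_1$ outside $P$ is either a non-$C$ vertex in $\s^{1}_{p_1} \subseteq \s^{3}_{\interior^{20}(Q)}$ (contradiction) or a $C$-neighbor of $p_1$ lying in $\bd^{20}(Q) \subseteq \bd^{80}(Q)$, contradicting $V(H) \cap \bd^{80}(Q) = \emptyset$; hence $p_1, p_\ell \notin V(C)$.

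For the main step I argue by contradiction: suppose $p_1 \in \s^{3}_u$ forces $u \in \interior^{40}(Q)$ for every witness $u \in \interior^{20}(Q)$. Fix such a $u$ and a shortest witnessing path $\rho = p_1 r_1 r_2 u$ in $G - D - (V(C) \setminus \{u\})$, necessarily of length $3$ (else $p'_1 \in \s^{3}_u$). For any $w \in V(C)$ with $p'_1 \in \s^{3}_w$, concatenation with $\rho$ and the edge $p'_1 p_1$ yields a $G - D$ path from $w$ to $u$ of length at most $7$; Lemma~\ref{lem:distancebasic3} then forces $\dist_C(w, u) < 20$. A case analysis on the position of $w$ ($w \in \interior^{20}(Q)$ gives $p'_1 \in \s^3_{\interior^{20}(Q)}$; $w \in \bd^{20}(Q)$ forces $\dist_C(w,u) \geq 20$ using $|V(Q)| \leq |V(C)| - 121$ from the construction of $X_{\ref{lem:greedypacking}}$; $w \in V(C) \setminus V(Q)$ forces $\dist_C(w,u) \geq 40$), rules out every case. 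Hence $p'_1 \notin \s^{3}_w$ for any $w \in V(C)$, so in particular $p'_1$ has no $C$-neighbor. Tracing $H$ onwards from $p'_1$ until first reaching a $C$-vertex $y$ of $H$, and prepending $\rho^{-1}$, yields a $G - D$ path from $u$ to $y$; together with the analogous path constructed at $p_\ell$ and a suitable arc of $C$, this assembles a long hole of $G$ strictly shorter than $C$, contradicting the minimality of $C$. The fact that the two endpoints lie in opposite bands follows by a similar length argument applied to $P$ itself using Lemma~\ref{lem:distancebasic3}.

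The main obstacle I foresee is cleanly executing the closing construction in the main step: I must verify that the assembled cycle is induced, has length at least $6$, and is strictly shorter than $|V(C)|$, which requires careful length tracking of $\rho$, the $H$-subpath to $y$, and the connecting arc of $C$.
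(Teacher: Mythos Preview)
Your proposal has a genuine gap in the main step, and it concerns precisely the closing construction you yourself flag as the obstacle.

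After correctly establishing that $p'_1 \notin \s^3_C$, you propose to trace $H$ from $p'_1$ to the first $C$-vertex $y$, prepend $\rho^{-1}$, and then ``together with the analogous path constructed at $p_\ell$ and a suitable arc of $C$'' assemble a long hole shorter than $C$. There are two problems. First, the contradiction hypothesis is that every witness of $p_1$ lies in $\interior^{40}(Q)$; you have made no such assumption at $p_\ell$, so there is no ``analogous path'' available there. Second, and more seriously, you have no control whatsoever on the length of the $H$-subpath from $p'_1$ to $y$: the hole $H$ can be arbitrarily long, so the path $u\,\rho^{-1}\,p_1p'_1 \cdots y$ can be arbitrarily long, and there is no reason the resulting cycle should be shorter than $C$. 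Lemma~\ref{lem:distancebasic3} only gives \emph{lower} bounds on path lengths, not upper bounds, so it cannot close this argument.

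The paper's proof takes a fundamentally different route at exactly this point: it uses the maximal good sparse ear decomposition $\h$ through Lemma~\ref{lem:longout}. Once $w' \notin \s^3_C$ (your $p'_1$), one backtracks along $P$ to the first vertex $a \in \s^2_C$, then traces forward along $H$ to the first vertex $u \in V(\h)$. The resulting induced path $a \cdots u$ would either extend $\h$ or be an appendage of $\h$, contradicting maximality of $\h$ together with $X_{\ref{lem:narrowalgo}} \subseteq X_{\ref{lem:tunnellemma4}}$. This is precisely why $X_{\ref{lem:sparsealgo}}$ and $X_{\ref{lem:narrowalgo}}$ were folded into $X_{\ref{lem:tunnellemma4}}$ before this lemma; your argument never invokes $\h$, so it cannot exploit this structure.

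Your final sentence about the two endpoints lying in opposite bands is also too vague to stand. A direct length argument via Lemma~\ref{lem:distancebasic3} does not seem to suffice here: if both ends of $P$ were in $\s^3_{L_1}$, one needs Lemma~\ref{lem:tunnellemma4} to force each half of $P$ through $\s^3_{L_2}$, and then Lemma~\ref{lem:tunnellemma5} to produce a long hole inside $\s^3$ of a $60$-vertex subpath of $Q$, contradicting Lemma~\ref{lem:largesupp}.
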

    \begin{proof}
    Because $X_{\ref{lem:greedypacking}}\subseteq X_{\ref{lem:tunnellemma4}}$,
    by Lemma~\ref{lem:largesupp},
	$H$ is not contained in $\s^3_Q$.
	Let $P$ be the connected component of $H\cap G[\s^{3}_{\interior^{20}(Q)}]$ containing $v$.
	Then $P$ is a path.
	Let $\overline{Q}$ be the other subpath of $C$ with the same ends as $Q$.
	For convenience, 
	let $L_1:=\bd^{40}_L(Q)\setminus \bd^{20}_L(Q)$, 
	$L_2:=\bd^{80}_L(Q)\setminus \bd^{60}_L(Q)$, 
	$R_1:=\bd^{40}_R(Q)\setminus \bd^{20}_R(Q)$,  
	$R_2:=\bd^{80}_R(Q)\setminus \bd^{60}_R(Q)$, and 
	$M:=\interior^{20}(Q)$.
	See Figure~\ref{fig:tunnel1}.
	
We claim that both ends of $P$ are contained in $\s^3_{L_1\cup R_1}$.
Suppose the contrary, and let $w$ be an end of $P$ such that $w\in \s^3_M\setminus \s^3_{L_1\cup R_1}$. 
Let $w'$ be a neighbor of $w$ in $H$ that is not in $P$.
First assume that $w'\in \s^3_{C}$. Since $w'\notin \s^3_M$, 
$w'$ is contained in $\s^3_{\overline{Q}}\cup \s^3_{\bd^{20}(Q)}$. Therefore, there is a $C$-path of length at most $7$ in $G-D$ with one end in $V(\overline{Q})\cup \bd^{20}(Q)$ and the other end in $\interior^{40}(Q)$.
However, since $\dist_C( V(\overline{Q})\cup \bd^{20}(Q), \interior^{40}(Q))\ge 20$, we contradict Lemma~\ref{lem:distancebasic3}.
Thus, we may assume that $w'\notin \s^3_{C}$.

	Since $w' \notin \s^3_{C}$ and $w \in \s^3_{C}$, we have $w\in \s^3_{C}\setminus \s^2_{C}$.
	Following the direction from $w$ to $v$ in $P$, let $a$ be the first vertex so that $a\in \s^2_{C}$.
	Since $a$ is in $\s^3_M$, 
	again by Lemma~\ref{lem:distancebasic3}, 
	$a \notin \s^3_{\overline{Q}}$.
	Thus, $a\in \s^2_q$ for some $q\in V(Q)$. Observe that $q\notin X_{\ref{lem:tunnellemma4}}$. Let $w''$ be the neighbor of $w'$ in $C$ that is not $w$.

	Now, in the path $aPww'w''$, $a\in \s^2_q$ while the next three vertices are not contained in $\s^2_{C}$.
	By Lemma~\ref{lem:extension}, no vertex in $aPww'w''$ is contained in $\h$. Following the direction from $w$ to $w'$ in $H$, let $u$ be the first vertex contained in $\h$. Then the subpath of $H$ from $a$ to $u$ containing $w$ satisfies the conditions of Lemma~\ref{lem:longout}, which yields a contradiction.
	So, we conclude that both ends of $P$ are contained in $\s^3_{L_1\cup R_1}$.

	

      \begin{figure}
  \centering
  \begin{tikzpicture}[scale=0.7]
  \tikzstyle{w}=[circle,draw,fill=black!50,inner sep=0pt,minimum width=4pt]

   \draw (-2,0)--(11,0);
	\draw(-2, 0)--(-3,-0.5);
	\draw(11, 0)--(12,-0.5);
    \draw[dashed](13, -1)--(12,-0.5);
	\draw[dashed](-4,-1)--(-3,-0.5);

    \draw (9, 0) node [w] (v) {};
     \node at (9, -.5) {$v$};
     \node at (2, -.5) {$L_1$};
     \node at (6, -.5) {$L_2$};
     \node at (4.5, -1.7) {$M$};
    \node at (4.5, -2.7) {$Q$};
 
     \node at (-0.7, 0.3) {$0$};
     \node at (-0.7+2, 0.3) {$20$};
     \node at (-0.7+4, 0.3) {$40$};
     \node at (-0.7+6, 0.3) {$60$};
     \node at (-0.7+8, 0.3) {$80$};
    
     \draw (-1, -0.5)--(-1, 3.5);
     \draw (-1.1, -0.5)--(-1.1, 3.5);

     \draw (1, -0.5)--(1, 3.5);
     \draw (3, -0.5)--(3, 3.5);
     \draw (5, -0.5)--(5, 3.5);
     \draw (7, -0.5)--(7, 3.5);

   \draw (1, -1)--(1, -1.4);
   \draw (1, -1.2)--(8, -1.2);

   \draw (-1, -2)--(-1, -2.4);
   \draw (-1.1, -2)--(-1.1, -2.4);
   \draw (-1, -2.2)--(10, -2.2);
   \draw[dashed] (10, -2.2)--(11, -2.2);

    \draw[decorate, decoration={snake}] (-1, 3)--(10, 3);
 	

\draw[rounded corners] (-3-.5+7,4+1)--(-3-.5+7,3+1)--(0-.5+7,3+1)--(0-.5+7,5+1)--(-3-.5+7,5+1)--(-3-.5+7,4+1);
	     \node at (7, 5.5) {$D$};
        \node at (10.5, 1.5) {$P$};
     
    	 \draw (8.2, 1) node [w] (v1) {};
   	 \draw (8.4, 0.5) node [w] (v2) {};

   	 \draw (9, 1) node [w] (v3) {};
   	
	 \draw (8, 1.8) node [w] (a1) {};
   	
	 \draw (6, 2.3) node [w] (v4) {};
   	 \draw (4, 2.3) node [w] (v5) {};
   	 \draw (2, 2.3) node [w] (v6) {};
   
    	 \draw (2, 1.5) node [w] (w1) {};
    	 \draw (4, 1.5) node [w] (w2) {};
    	 \draw (6, 1.5) node [w] (w3) {};

    	 \draw (2.3, 1.2) node [w] (z1) {};
    	 \draw (2.3, 0.6) node [w] (z2) {};
    	 \draw (2.3, 0) node [w] (z3) {};
    	 \draw (1.7, 1.2) node [w] (z4) {};
    	 \draw (1.7, 0.6) node [w] (z5) {};
    	 \draw (1.7, 0) node [w] (z6) {};

	\draw[very thick] (w1)--(z1)--(z2)--(z3);
     \draw[very thick] (v6)--(z4)--(z5)--(z6);

    	 \draw (6.3, 1.2) node [w] (b1) {};
    	 \draw (6.3, 0.6) node [w] (b2) {};
    	 \draw (6.3, 0) node [w] (b3) {};
    	 \draw (5.7, 1.2) node [w] (b4) {};
    	 \draw (5.7, 0.6) node [w] (b5) {};
    	 \draw (5.7, 0) node [w] (b6) {};

	\draw[very thick] (w3)--(b1)--(b2)--(b3);
     \draw[very thick] (v4)--(b4)--(b5)--(b6);

     	\draw(w1)--(w2)--(w3)--(v1)--(v2)--(v)--(v3)--(a1)--(v4)--(v5)--(v6);
  \end{tikzpicture}     \caption{The construction of a long hole in Lemma~\ref{lem:tunnellemma3} when the two ends of $P$ are contained in $\s^3_{L_1}$. }\label{fig:tunnel2}
\end{figure}
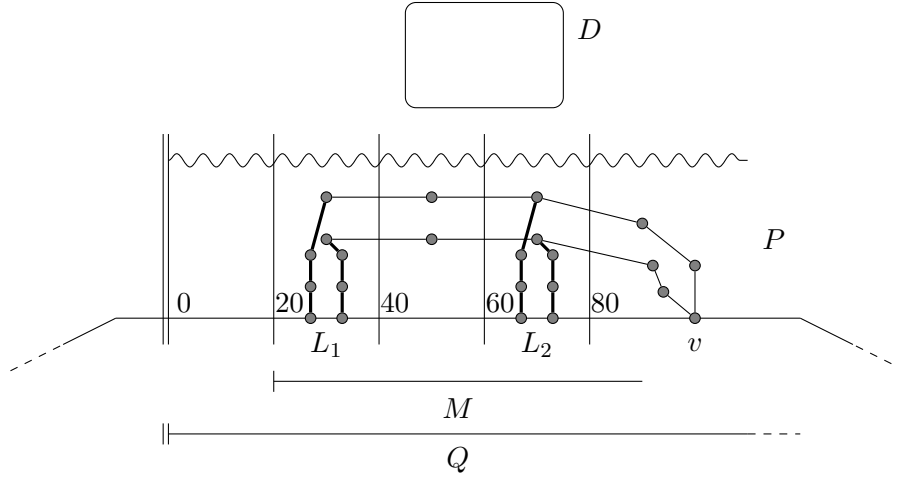

	To conclude the lemma, we claim that one end of $P$ is in $\s^3_{L_1}$ and the other end is in $\s^3_{R_1}$. Suppose not.  By symmetry, we may assume that both ends of $P$ are in $\s^3_{L_1}$.
	See Figure~\ref{fig:tunnel2} for an illustration.

	Let $P_1$ and $P_2$ be the two subpaths of $P$ from $v$ to the ends of $P$.
	Applying Lemma~\ref{lem:tunnellemma4} to $P_1$ and $P_2$, we deduce that for each $i\in [2]$, 
	$P_i$ contains a vertex of $\s^3_{L_2}$.
	For each $i\in [2]$, let $Q_i$ be a shortest subpath of $P_i$ from $\s^3_{L_1}$ to $\s^3_{L_2}$ respectively. By Lemma~\ref{lem:distancebasic3}, all the internal vertices of $Q_i$ are contained in $\s^3_{\bd^{60}_L(Q) \setminus \bd^{40}_L(Q)}$, and $Q_i$ is contained in $\s^3_{\bd^{80}_L(Q) \setminus \bd^{20}_L(Q)}$.
	 By Lemma~\ref{lem:tunnellemma5}, $G\left[\s^3_{\bd^{80}_L(Q) \setminus \bd^{20}_L(Q)}\right]$ contains a long hole.
	 This contradicts Lemma~\ref{lem:largesupp}.
		\end{proof}

	

	

   \begin{proposition}\label{prop:Davoid}
There is a polynomial-time algorithm to find a vertex set $X_{\ref{prop:Davoid}}\subseteq V(G)\setminus X_{\ref{lem:tunnellemma4}}$ of size at most $380 \abs{V(C)\cap X_{\ref{lem:tunnellemma4}}}$ such that $X_{\ref{lem:tunnellemma4}}\cup X_{\ref{prop:Davoid}}$ hits all $D$-avoiding long holes.  
\end{proposition}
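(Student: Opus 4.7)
I plan to build $X_{\ref{prop:Davoid}}$ component by component. Let $\mathcal{Q}$ be the set of connected components of $C-X_{\ref{lem:tunnellemma4}}$; for each $Q\in\mathcal{Q}$ I will produce a set $X_Q\subseteq V(G)\setminus X_{\ref{lem:tunnellemma4}}$ of size at most $380$ that hits every $D$-avoiding long hole of $G-X_{\ref{lem:tunnellemma4}}$ meeting $V(Q)$, and then take $X_{\ref{prop:Davoid}}:=\bigcup_{Q\in\mathcal{Q}} X_Q$. Since $v_{\init}\in V(C)\cap X_{\ref{lem:tunnellemma4}}$, the number of components of $C-X_{\ref{lem:tunnellemma4}}$ is at most $|V(C)\cap X_{\ref{lem:tunnellemma4}}|$, which yields the required size bound. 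This partition works because every $D$-avoiding long hole $H$ in $G-X_{\ref{lem:tunnellemma4}}$ must contain a vertex of $V(C)\setminus X_{\ref{lem:tunnellemma4}}$ (as $G-V(C)$ has no long hole), and therefore meets some component $Q$.

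For $|V(Q)|\le 380$ I set $X_Q:=V(Q)$, which trivially hits every long hole meeting $V(Q)$. For $|V(Q)|>380$ I set $X_Q:=\bd^{80}(Q)\cup Y_Q$, where $\bd^{80}(Q)$ contributes at most $160$ vertices. Combined with Lemma~\ref{lem:tunnellemma3}, adding $\bd^{80}(Q)$ reduces the remaining task to hitting ``tunneling'' paths: for any $D$-avoiding long hole $H$ meeting $\interior^{80}(Q)$ but missing $\bd^{80}(Q)$, the connected component of $H\cap G[\s^3_{\interior^{20}(Q)}]$ containing a vertex of $H\cap \interior^{80}(Q)$ is a path from $\s^3_{L_1}$ to $\s^3_{R_1}$, where $L_1:=\bd^{40}_L(Q)\setminus \bd^{20}_L(Q)$ and $R_1$ is defined symmetrically. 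The inner piece $Y_Q$, of size at most $220$, is a vertex cut in $G-X_{\ref{lem:tunnellemma4}}$ that hits every such tunneling path in $G[\s^3_{\interior^{20}(Q)}]$.

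To bound $|Y_Q|\le 220$ I localize around the central $20$-vertex subpath $Q^*$ of $\interior^{20}(Q)$. Lemma~\ref{lem:tunnellemma4}, applied with $\interior^{20}(Q)$ in the role of the lemma's $Q$ and with the above $Q^*$, forces every tunneling path to traverse $\s^3_{Q^*}\setminus \s^3_{\interior^{20}(Q)\setminus V(Q^*)}$; moreover, Lemma~\ref{lem:tunnellemma5} combined with Lemma~\ref{lem:largesupp} forbids two vertex-disjoint and non-adjacent tunneling paths locally at $Q^*$, since such a pair would produce a long hole inside $\s^3_Q$, contradicting Lemma~\ref{lem:largesupp}. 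A Menger-type argument in the localized region around $Q^*$ then yields $Y_Q$ of constant size at most $220$, computable in polynomial time via max-flow. The main obstacle is pinning down this separator bound: the set $\s^3_{Q^*}$ can be arbitrarily large in $G$, so obtaining $|Y_Q|\le 220$ requires carefully combining the ``no wide tunnel'' property from Lemma~\ref{lem:tunnellemma5} with the ``no long hole in $\s^3_Q$'' property from Lemma~\ref{lem:largesupp} to localize the cut around $Q^*$. Once the separator is in hand, verifying that $X_{\ref{lem:tunnellemma4}}\cup X_{\ref{prop:Davoid}}$ hits every $D$-avoiding long hole and that the whole construction runs in polynomial time is a direct case analysis.
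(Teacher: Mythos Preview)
Your setup (handle components of $C-X_{\ref{lem:tunnellemma4}}$ separately, absorb short components whole, strip a boundary from long ones, then deal with ``tunneling'' paths guaranteed by Lemma~\ref{lem:tunnellemma3}) matches the paper's. The divergence is in how you kill the tunneling paths, and that is exactly where your argument has a gap.

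You want a set $Y_Q$ of bounded size hitting every path from $\s^3_{L_1}$ to $\s^3_{R_1}$ inside $G[\s^3_{\interior^{20}(Q)}]$, and you appeal to Menger together with Lemma~\ref{lem:tunnellemma5}. But Lemma~\ref{lem:tunnellemma5} only forbids a pair of vertex-disjoint tunneling paths \emph{with no edges between them}; it says nothing about the maximum number of merely vertex-disjoint such paths, which is what Menger needs. Inside a single connected component of $G[\s^3_M]-M$ (and Lemma~\ref{lem:tunnellemma5} indeed forces all tunneling to live in one such component) there can be many vertex-disjoint left-to-right paths that are pairwise adjacent, so your ``at most two'' heuristic fails and no constant separator bound follows. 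The localization to $\s^3_{Q^*}\setminus\s^3_{\interior^{20}(Q)\setminus V(Q^*)}$ via Lemma~\ref{lem:tunnellemma4} is correct but does not help: that set can be arbitrarily large. The number $220$ appears to be back-computed from $380-160$ rather than derived.

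The paper does \emph{not} try to cut all tunneling paths. Instead it adds only vertices of $C$: the boundary $\bd^{160}(Q)$, plus possibly one further $60$-vertex interval of $Q$. The key idea you are missing is to look at the (unique, by Lemma~\ref{lem:tunnellemma5}) connected components $F_L$ and $F_R$ of $G[\s^3_M]-M$ that reach deep in from the left and right respectively. If $F_L\neq F_R$, the rightmost reach of $F_L$ pins down a $60$-vertex window $U\subseteq V(Q)$; any surviving hole would then yield, together with $F_L$, two non-adjacent vertex-disjoint crossings of $U$, contradicting Lemma~\ref{lem:largesupp}. If $F_L=F_R$, no extra vertices are added at all: one chooses a counterexample hole $H$ with $|V(H)\cap V(C)|$ minimum and reroutes its tunneling segment through $F_L$ (which avoids $V(C)$), producing a $D$-avoiding long hole with strictly fewer $C$-vertices. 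This global minimality/rerouting step is what replaces your Menger cut.
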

\begin{proof}
Recall that $C=a_0a_1a_2 \cdots a_ma_0$ and consider it as a directed cycle where $a_0$ is directed to $a_1$.
For a subpath $Q$ of $C$ and two vertices $a_i$ and $a_j$ in $Q$, 
we denote by $a_i\preccurlyeq_Q a_j$ if there is a directed path from $a_i$ to $a_j$.

We construct a set $S$ as follows.
We iterate the following over all components $Q$ of $C-X_{\ref{lem:tunnellemma4}}$.
For each $i\in [7]$, let $L_i:=\bd^{20(i+1)}_L(Q)\setminus \bd^{20i}_L(Q)$, $R_i:=\bd^{20(i+1)}_R(Q)\setminus \bd^{20i}_R(Q)$, and $M:=\interior^{20}(Q)$.
\begin{enumerate}[(1)] 
\item If $Q$ has at most $320$ vertices, then we add all vertices of $Q$ to $S$.
\item Assume that $Q$ has more than $320$ vertices. We first add $\bd^{160}(Q)$ to $S$.
Let $F_L$ be the connected component of $G[\s^3_{M}]-M$ intersecting both $\s^3_{L_1}$ and $\s^3_{L_3}$, if one exists. Similarly, let $F_R$ be the connected component of $G[\s^3_M]-M$ intersecting both $\s^3_{R_1}$ and $\s^3_{R_3}$, if one exists. If $F_L$ or $F_R$ does not exist, then we skip this component $Q$. (If there are two components intersecting both $\s^3_{L_1}$ and $\s^3_{L_2}$, then there is a long hole contained in $\s^3_{L_1\cup L_2\cup L_3}$ by Lemma~\ref{lem:tunnellemma5}, which contradicts Lemma~\ref{lem:largesupp}. Thus, if such a component exists, then the component uniquely exists, and the same argument holds for $\s^3_{R_1}$ and $\s^3_{R_3}$.)
\item Now, assume that both $F_L$ and $F_R$ exist. If $F_L=F_R$, then we skip this component $Q$. If $F_L\neq F_R$, then 
let $q$ be the vertex of $Q$ where $\s^3_{q}\cap V(F_L)\neq \emptyset$ and it is largest with respect to the $\preccurlyeq_Q$ relation. 
We add the set $\{v\in V(Q) : v\preccurlyeq_Q q, \dist_Q(v, q)\le 59 \}$ to $S$.
\end{enumerate}

Let $X_{\ref{prop:Davoid}}$ be the final set $S$.
Clearly, $\abs{X_{\ref{prop:Davoid}}}\le 380\abs{V(C)\cap X_{\ref{lem:tunnellemma4}}}$ and it can be computed in polynomial time.

It remains to show that $G-(X_{\ref{lem:tunnellemma4}}\cup X_{\ref{prop:Davoid}})$ has no $D$-avoiding long hole. Suppose that 
there is a $D$-avoiding long hole $H$ in $G-(X_{\ref{lem:tunnellemma4}}\cup X_{\ref{prop:Davoid}})$. We choose such a hole with minimum $\abs{V(H)\cap V(C)}$. Since $G-V(C)$ has no long hole, 
$H$ must contain a vertex of $C$. Let $Q$ be a connected component of $C-X_{\ref{lem:tunnellemma4}}$ containing a vertex of $H$, say $v$.
Observe that when we constructed $X_{\ref{prop:Davoid}}$, we added $\bd^{160}(Q)$ to $X_{\ref{prop:Davoid}}$, 
and therefore, $v$ is contained in $\interior^{160}(Q)$.
So, by Lemma~\ref{lem:tunnellemma3}, the connected component of $H\cap G[\s^{3}_M]$ containing $v$ is a path from $\s^{3}_{L_1}$ to $\s^{3}_{R_1}$. Let $P$ be the path.
Clearly its ends are contained in $F_L$ and $F_R$, respectively. 

We separately consider the two cases depending on whether $F_L=F_R$ or not.

\medskip
\noindent {\bf Case 1.} $F_L\neq F_R$.

As $\interior^{160}(Q)$ is non-empty, $Q$ has more than $320$ vertices. Let $q$ be the vertex of $Q$ where $\s^3_{q}\cap V(F_L)\neq \emptyset$ and it is largest with respect to the $\preccurlyeq_Q$ relation. 
By the construction of $X_{\ref{prop:Davoid}}$, the set  $\{v\in V(Q) : v\preccurlyeq_Q q, \dist_Q(v, q)\le 59 \}$ was added to $X_{\ref{prop:Davoid}}$.
Let $U:=\{v\in V(Q) : v\preccurlyeq_Q q, \dist_Q(v, q)\le 59 \}$.

Observe that if $q$ is contained in $\bd^{100}_R(Q)$, then by Lemma~\ref{lem:tunnellemma4}, each of $F_L$ and $F_R$ contains a path from $\bd^{100}_{R_5}(Q)$ to $\bd^{100}_{R_7}(Q)$, and by Lemma~\ref{lem:tunnellemma5},
there is a long hole contained in $\bd^{100}_{R_5\cup R_6\cup R_7}(Q)$, a contradiction.
So, $q$ is not contained in $\bd^{100}_R(Q)$.
Also, the path $P$ is a path from $\s^3_{L_1}$ to $\s^3_{R_1}$, 
$q$ is not contained in $\bd^{160}_L(Q)$.
Therefore, $Q-U$ has two connected components.
Let $Q_1$ and $Q_2$ be the connected components of $Q-U$ such that $\bd^{20}(Q)\in V(Q_1)$.

As $P$ is a path from $\s^{3}_{L_1}$ to $\s^{3}_{R_1}$, 
$P$ is a path between $\s^3_{Q_1}$ and $\s^3_{Q_2}$. Let $P'$ be a shortest subpath of $P$ from $\s^3_{Q_1}$ to $\s^3_{Q_2}$. 
By the minimality, $P'$ does not contain a vertex of $Q$, because every neighbor of a vertex in $Q_1$ is in $\s^3_{Q_1}$ and every neighbor of a vertex in $Q_2$ is in $\s^3_{Q_2}$.
Thus, $P'$ is fully contained in $G[\s^3_{M}]-M$.
On the other hand, since $\s^3_{Q_2}\cap V(F_L)=\emptyset$ while $P'$ intersects $\s^3_{Q_2}$, 
$P'$ is not contained in $F_L$.
So, each of $P'$ and $F_L$
contains a path from $\s^{3}_{\bd^{20}_L(G[U])}$ to 
$\s^{3}_{\bd^{20}_R(G[U])}$.
 By Lemma~\ref{lem:tunnellemma5}, there is a long hole contained in $\s^3_U$, which contradicts the consequence of Lemma~\ref{lem:largesupp}.

\medskip
\noindent {\bf Case 2.} $F_L= F_R$.

By Lemma~\ref{lem:tunnellemma4}, $P$ intersects $\s^3_{L_7}\cap V(F_L)$ and $\s^3_{R_7} \cap V(F_L)$.
We choose a vertex $w_1$ of $P$ in $\s^3_{L_7}\cap V(F_L)$ and 
$w_2$ of $P$ in $\s^3_{R_7} \cap V(F_L)$. Let $Z$ be a shortest path from $w_1$ to $w_2$ in $F_L$.
Let $v_1v_2v_3v_4v_5$ be a subpath of $H$ such that $v_3\in \s^3_{L_2}\setminus \s^3_{L_1}$ and $v_3\in V(P)$. Such a vertex $v_3$ exists by Lemma~\ref{lem:tunnellemma4}.
Then $\{v_2, v_3, v_4\}$ is contained in $\s^3_{L_1\cup L_2\cup L_3}$.

If $Z$ contains a vertex in $\s^3_{\bd^{100}_L(Q)}$, then 
there are two subpaths of $P$ from $\s^3_{L_5}$ to $\s^3_{L_7}$ contained in $\s^3_{L_5\cup L_6\cup L_7}$.
Then by Lemma~\ref{lem:tunnellemma5}, there is a long hole contained in $\s^3_{L_5\cup L_6\cup L_7}$, a contradiction.
Therefore, $Z$ contains no vertex in $\s^3_{\bd^{100}_L(Q)}$, and by Lemma~\ref{lem:distancebasic3}, 
no vertex in $\{v_2, v_3, v_4\}$ has a neighbor in $Z$.

Let $H'$ be the path from $v_1$ to $v_5$ that does not contain $v_2$. Let $H'':=(H'- (V(P)\setminus \{w_1, w_2\}))\cup Z$. Clearly, $H''$ is connected. Let $H'''$ be a shortest path from $v_1$ to $v_5$ in $H''$.
Observe that there are no edges between $\{v_2, v_3, v_4\}$ and $V(H''')\setminus \{v_1, v_5\}$ in $G$. So, by Lemma~\ref{lem:detecting}, $v_1v_2v_3v_4v_5(H''')v_1$ is a $D$-avoiding long hole. As $v\in V(P)$ and $v\notin Z$, this long hole has vertices of $C$ strictly less than $H$. This contradicts the minimality of $\abs{V(H)\cap V(C)}$.

\medskip

We conclude that $G-(X_{\ref{lem:tunnellemma4}}\cup X_{\ref{prop:Davoid}})$ has no $D$-avoiding long hole.
\end{proof}

    \subsection{$D$-traversing long holes}\label{subsec:traversing}
    
    It remains to deal with $D$-traversing long holes. 
    We begin by constructing a second greedy packing of long holes. 
    This step is similar to the first greedy packing in Section~\ref{sec:greedy}, but now we also consider the vertex set $V(G)\setminus (D \cup \s^3_C$). 
    

    
    \medskip
\textbf{Algorithm. (Second greedy packing)}

\begin{enumerate}
	\item Set $G_1:=G-(X_{\ref{lem:tunnellemma4}}\cup X_{\ref{prop:Davoid}})$ and 
    initialize $C_1=C-(X_{\ref{lem:tunnellemma4}}\cup X_{\ref{prop:Davoid}})$, $D_1=D\setminus (X_{\ref{lem:tunnellemma4}}\cup X_{\ref{prop:Davoid}})$, and $M_1=\emptyset$. 
    \item Choose a previously unchosen pair $(A,v)$, where $A$ is a non-empty subset of size at most $2$ of $D_i$ and $v$ is a vertex of $C_i$.  If there are no remaining pairs, then proceed to step (6).
    \item Test if $G_1-M_i$ contains a long hole such that it contains $v$ and its intersection on $D_i$ is contained in $A$.   
    \item If no, then proceed to step (1).  
    \item If yes, then set $D_{i+1}:=D_i \setminus A$ and $C_{i+1}:=C_i - N^{90}_C[v]$ and $M_{i+1}:=M_i\cup A\cup N^{90}_C[v]$, then proceed to step (2).  
    \item Let $\ell$ be the largest index for which $D_{\ell}$ exists. Define $X_{\ref{prop:traversing}}:=M_{\ell}$.
\end{enumerate}


    \begin{figure}
  \centering
  \begin{tikzpicture}[scale=0.7]
  \tikzstyle{w}=[circle,draw,fill=black!50,inner sep=0pt,minimum width=4pt]

   \draw (-2,0)--(11,0);
	\draw(-2, 0)--(-3,-0.5);
	\draw(11, 0)--(12,-0.5);
    \draw[dashed](13, -1)--(12,-0.5);
	\draw[dashed](-4,-1)--(-3,-0.5);

    \draw (4.5, 0) node [w] (v) {};
     \node at (4.5, -.5) {$v$};
     \node at (4.5, -1.7) {$N^{15}_C[v]$};
    \node at (4.5, -2.7) {$N^{35}_C[v]$};
    
     \draw (1, -0.5)--(1, 3.5);

     \draw (-1, -0.5)--(-1, 3.5);
     \draw (-1.1, -0.5)--(-1.1, 3.5);

     \draw (8, -0.5)--(8, 3.5);

     \draw (10, -0.5)--(10, 3.5);
     \draw (10.1, -0.5)--(10.1, 3.5);

   \draw (1, -1)--(1, -1.4);
   \draw (8, -1)--(8, -1.4);
   \draw (1, -1.2)--(8, -1.2);

   \draw (-1, -2)--(-1, -2.4);
   \draw (-1.1, -2)--(-1.1, -2.4);
   \draw (10, -2)--(10, -2.4);
   \draw (10.1, -2)--(10.1, -2.4);
   \draw (-1, -2.2)--(10, -2.2);

    \draw[decorate, decoration={snake}] (-1, 3)--(10, 3);
 	
   \node at (0, 2.3) {$\s^3_{\bd^{20}(Q)}$};
   \node at (2.1, 2.3) {$\s^3_{\interior^{20}(Q)}$};

\draw[rounded corners] (-3-.5+7,4+1)--(-3-.5+7,3+1)--(0-.5+7,3+1)--(0-.5+7,5+1)--(-3-.5+7,5+1)--(-3-.5+7,4+1);
	\draw[rounded corners] (-2-.5+7,3.5+1)--(-2-.5+7,3+1)--(-1-.5+7,3+1)--(-1-.5+7,4+1)--(-2-.5+7,4+1)--(-2-.5+7,3.5+1);
     \node at (6, 4.5) {$A$};
     \node at (7, 5.5) {$D$};
     
    	 \draw (2.2, 1) node [w] (v1) {};
   	 \draw (3.4, 0.5) node [w] (v2) {};

   	 \draw (5, 1) node [w] (v3) {};
   	 \draw (5.5, 2) node [w] (v4) {};
   	 \draw (6, 2) node [w] (v5) {};
   	 \draw (6.5, 2.5) node [w] (v6) {};

   	 \draw (7, 3.5) node [w] (v7) {};
   	 \draw (6, 3.8) node [w] (v8) {};
   	 \draw (5, 4.3) node [w] (v9) {};
     
     	\draw(v1)--(v2)--(v)--(v3)--(v4)--(v5)--(v6)--(v7)--(v8)--(v9)--(v1);
     
  \end{tikzpicture}     \caption{An example of a long hole $H$ in Lemma~\ref{lem:2ndgreedycompo1}, where $Q=G[N^{35}_C[v]]$. }\label{fig:traverse1}
\end{figure}
    
    We first show that when we consider the pair $(A, v)$, if a long hole $H$ is detected, then 
    the connected component of $H\cap G[\s^3_Q]$ containing $v$ is a path, 
    where $Q=G[N^{35}_C[v]]$. See Figure~\ref{fig:traverse1} for an example.

        \begin{figure}
  \centering
  \begin{tikzpicture}[scale=0.7]
  \tikzstyle{w}=[circle,draw,fill=black!50,inner sep=0pt,minimum width=4pt]

   \draw (-2,0)--(11,0);
	\draw(-2, 0)--(-3,-0.5);
	\draw(11, 0)--(12,-0.5);
    \draw[dashed](13, -1)--(12,-0.5);
	\draw[dashed](-4,-1)--(-3,-0.5);

    \draw (4.5, 0) node [w] (v) {};
     \node at (4.5, -.5) {$v$};
     \node at (4.5, -1.7) {$N^{15}_C[v]$};
    \node at (4.5, -2.7) {$N^{35}_C[v]$};
    
     \draw (1, -0.5)--(1, 3.5);

     \draw (-1, -0.5)--(-1, 3.5);
     \draw (-1.1, -0.5)--(-1.1, 3.5);

     \draw (8, -0.5)--(8, 3.5);

     \draw (10, -0.5)--(10, 3.5);
     \draw (10.1, -0.5)--(10.1, 3.5);

   \draw (1, -1)--(1, -1.4);
   \draw (8, -1)--(8, -1.4);
   \draw (1, -1.2)--(8, -1.2);

   \draw (-1, -2)--(-1, -2.4);
   \draw (-1.1, -2)--(-1.1, -2.4);
   \draw (10, -2)--(10, -2.4);
   \draw (10.1, -2)--(10.1, -2.4);
   \draw (-1, -2.2)--(10, -2.2);

    \draw[decorate, decoration={snake}] (-1, 3)--(10, 3);
 	
   \node at (0, 2.3) {$\s^3_{\bd^{20}(Q)}$};
   \node at (2.2, 2.3) {$\s^3_{\interior^{20}(Q)}$};
 
\draw[rounded corners] (-3-.5+7,4+1)--(-3-.5+7,3+1)--(0-.5+7,3+1)--(0-.5+7,5+1)--(-3-.5+7,5+1)--(-3-.5+7,4+1);
	\draw[rounded corners] (-2-.5+7,3.5+1)--(-2-.5+7,3+1)--(-1-.5+7,3+1)--(-1-.5+7,4+1)--(-2-.5+7,4+1)--(-2-.5+7,3.5+1);
     \node at (6, 4.5) {$A$};
     \node at (7, 5.5) {$D$};
     \node at (8.8, 2.7) {$x$};
     \node at (5, 4.7) {$d$};
     
     	\draw(5, 4.3)--(4, 0); \draw(5, 4.3)--(8.8,0);
		 \draw (4, 0) node [w] (b) {};
		 \draw (8.8, 0) node [w] (c) {};
   
   \node at (6, 1) {$R_1$};
   \node at (8.6, 1.3) {$R_2$};
   
    	 \draw (5, 1) node [w] (v3) {};
   	 \draw (5.5, 2) node [w] (v4) {};
   	 \draw (6, 2) node [w] (v5) {};
   	 \draw (7, 2.5) node [w] (v6) {};

   	 \draw (8.5, 2.5) node [w] (v7) {};
   
   	 \draw (9, 2) node [w] (v8) {};
   	 \draw (9.2, 1) node [w] (v9) {};
   	 \draw (9.2, 0) node [w] (v0) {};
 
   		 \draw (5, 4.3) node [w] (A) {};
     
     	\draw (v)--(v3)--(v4)--(v5)--(v6)--(v7)--(v8)--(v9)--(v0);
     
  \end{tikzpicture}     \caption{The paths $Q$ and $R$ in Lemma~\ref{lem:2ndgreedycompo1}. The vertex $d$ may have a neighbor in $R-x$.}\label{fig:traverse2}
\end{figure}
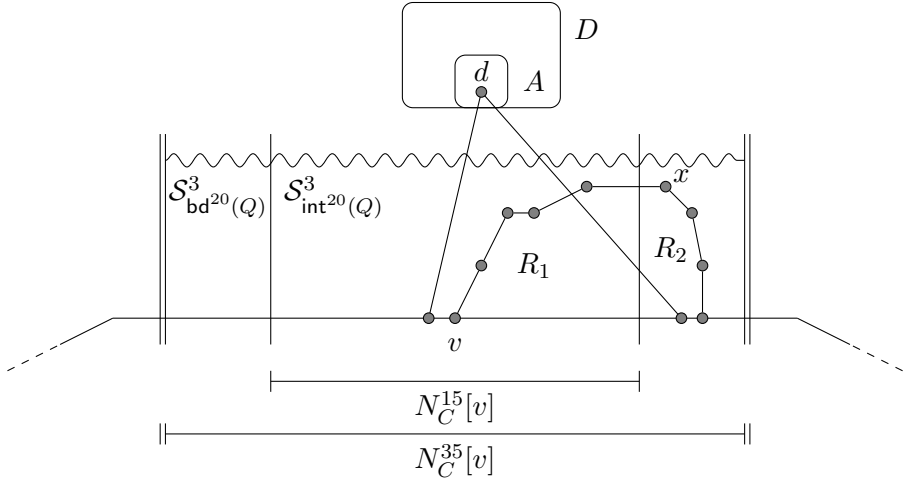

    \begin{lemma}\label{lem:2ndgreedycompo1}
    Let $(A, v)$ be a pair considered in the $i$-th step of the second greedy packing such that $G_1-M_i$ contains a $D$-traversing long hole $H$ with $v\in V(H)$ and $V(H)\cap D_i\subseteq A$.
    Let $Q=G[N^{35}_C[v]]$.
    Then the connected component of $H\cap G[\s^3_Q]$ containing $v$ 
    is a path $P$ such that 
    \begin{itemize}
        \item $V(P)\cap \s^3_{\bd^{20}(Q)}=\emptyset$, and
        \item If $a$ is an end of $P$ and $b$ is the neighbour of $a$ in $H$ such that $b \notin V(P)$, then $b \in V(G)\setminus \s^3_C$.
    \end{itemize}
    \end{lemma}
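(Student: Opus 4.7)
The plan is to establish the three conclusions---that $P$ is a proper sub-arc of the cycle $H$ (hence a path), that $V(P)\cap \s^3_{\bd^{20}(Q)}=\emptyset$, and the exit condition---by repeated use of Lemma~\ref{lem:largesupp} and the cycle-distance lemma (Lemma~\ref{lem:distancebasic3}). The first step is to observe that $Q=G[N_C^{35}[v]]$ is a subpath of $C-X_{\ref{lem:greedypacking}}$: since $v\in C_1=C-(X_{\ref{lem:tunnellemma4}}\cup X_{\ref{prop:Davoid}})$, the construction of $X_{\ref{prop:Davoid}}$ forces $v$ to lie in $\interior^{160}$ of some component of $C - X_{\ref{lem:tunnellemma4}}$ with more than $320$ vertices (because small components were added entirely to $X_{\ref{prop:Davoid}}$ in step (1) and the $160$-boundary was added in step (2)). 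So $\dist_C(v, X_{\ref{lem:tunnellemma4}})\geq 161 > 35$, and in particular $Q\subseteq C-X_{\ref{lem:greedypacking}}$. If $P=H$, then $H$ would be a long hole of $G-X_{\ref{lem:greedypacking}}$ contained in $\s^3_Q$, contradicting Lemma~\ref{lem:largesupp}. So $P$ is a proper sub-arc of $H$ and hence a path.

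Next I would prove the boundary condition, which is the main obstacle. Suppose by symmetry that some $u \in V(P)\cap \s^3_{\bd^{20}_L(Q)}$, with witness $w\in \bd^{20}_L(Q)$ and a path $R$ of length at most $3$ from $u$ to $w$ in $G-(D\cup (V(C)\setminus\{w\}))$. Let $P_1$ be the $(v,u)$-subpath of $P$, so $P_1\cup R$ gives a walk in $G-D$ from $v$ to $w$ of length at most $|P_1|+3$. Lemma~\ref{lem:distancebasic3}, applied to a $(v,w)$-path extracted from this walk, already shows $|P_1|\geq 4$ (using $\dist_C(v,w)\geq 16$). Combining $P_1\cup R$ with the $C$-arc $Q_1$ from $v$ to $w$ inside $Q$ produces a closed walk; my plan is to apply Lemma~\ref{lem:detecting} with $Q_1$ as the prescribed induced path of length at least $4$, together with a shortest $(v,w)$-path in $G-(N_G[V(Q_1)\setminus \{v,w\}]\setminus \{v,w\})$, to yield a long hole of $G-X_{\ref{lem:greedypacking}}$ contained in $\s^3_{N_C^{40}[v]}$. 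Since $N_C^{40}[v]\subseteq C-X_{\ref{lem:greedypacking}}$, this will contradict Lemma~\ref{lem:largesupp}. The delicate point is to verify that deleting $N_G[V(Q_1)\setminus \{v,w\}]\setminus \{v,w\}$ still leaves a $(v,w)$-path in the prescribed region; this uses both that $P_1$ is induced (being a subpath of the hole $H$) and Lemma~\ref{lem:distancebasic3} to preclude short back-edges from $V(P_1)\cup V(R)$ into the interior of $Q_1$ that would shortcut a long arc of $C$ and yield a long hole strictly shorter than $|V(C)|$.

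Finally, the exit condition follows from Lemma~\ref{lem:distancebasic3} directly. Let $a$ be an end of $P$ and $b$ its neighbor in $H$ with $b\notin V(P)$, and assume towards a contradiction that $b\in \s^3_c$ for some $c\in V(C)$. Because $b$ lies outside $V(G[\s^3_Q])$ (otherwise $b\in V(P)$), $c \notin V(Q)$, so $\dist_C(v,c)\geq 36$. By the boundary condition, $a \in \s^3_{a'}$ for some $a' \in \interior^{20}(Q)$, whence $\dist_C(v,a')\leq 15$ and there is a path of length at most $3$ from $a$ to $a'$ avoiding $D \cup (V(C)\setminus\{a'\})$. Since $b\in \s^3_c$ forces $b\notin D$, concatenating the short path $a'\to a$, the edge $ab$, and the short path $b\to c$ gives a walk of length at most $7$ in $G-D$ from $a'$ to $c$, and hence an $(a',c)$-path in $G-D$ of length at most $7$. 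But $\dist_C(a',c)\ge 36-15=21 \ge 4\cdot 5$, so Lemma~\ref{lem:distancebasic3} with $m=5$ forces any such path to have length at least $8$, a contradiction.
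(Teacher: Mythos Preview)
Your argument for the exit condition (the second bullet) is fine and matches the paper's. Your argument that $P\neq H$ is correct but roundabout: since $H$ is $D$-traversing it meets $D$, while $\s^3_Q\cap D=\emptyset$ by definition, so $H\not\subseteq\s^3_Q$ immediately.

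The real problem is your treatment of the boundary condition $V(P)\cap\s^3_{\bd^{20}(Q)}=\emptyset$. You propose to close the walk $P_1\cup R$ into a long hole using the $C$-arc $Q_1$ from $v$ to $w$ and Lemma~\ref{lem:detecting}, aiming for a hole inside $\s^3_{N_C^{40}[v]}$ and a contradiction with Lemma~\ref{lem:largesupp}. This does not go through. To apply Lemma~\ref{lem:detecting} you must exhibit a $(v,w)$-path in $G-(N_G[V(Q_1)\setminus\{v,w\}]\setminus\{v,w\})$, and your only candidate is (a subpath of) $P_1\cup R$. But nothing prevents the induced path $P_1\subseteq H$ from hugging $Q_1$: every internal vertex of $P_1$ may lie in $\s^1_{Q_1}$ and be adjacent to an interior vertex of $Q_1$, in which case deleting $N_G[V(Q_1)\setminus\{v,w\}]$ wipes out $P_1-\{v\}$ entirely. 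Lemma~\ref{lem:distancebasic3} only bounds the length of such a path from below; it does not forbid these adjacencies. Your sketch about ``shortcutting a long arc of $C$'' does not produce a contradiction here, and even if some $(v,w)$-path survived in $G-(\ldots)$, there is no reason the \emph{shortest} one stays inside $\s^3_{N_C^{40}[v]}$.

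The paper avoids this obstruction by a different mechanism: it uses a vertex $d\in A\cap V(H)\subseteq D$. Because $d$ lies on the hole $H$, it has no neighbours among the internal vertices of the sub-arc $R_1=vPx$; and because $d$ is almost $C$-dominating, it has neighbours both in $N_C[v]$ and in $\bd^{20}(Q)$. A shortest path $T$ from $N_G(d)\cap(\bd^{20}(Q)\cup V(R_2))$ to $N_G(d)\cap N_C[v]$ inside $G[\bd^{20}(Q)\cup N_C[v]\cup V(R_1)\cup V(R_2)]$ then yields a long hole $G[V(T)\cup\{d\}]$ contained in $\s^3_Q\cup D$, contradicting Lemma~\ref{lem:greedypacking} (not Lemma~\ref{lem:largesupp}). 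The use of the $D$-vertex is precisely what circumvents the adjacency problem you ran into, since $d$ is guaranteed to be non-adjacent to the interior of $R_1$.
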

    \begin{proof}
    Let $\overline{Q}$ be the other subpath of $C$ with the same ends as $Q$.
	Since $H$ is $D$-traversing and $D\cap \s^3_Q=\emptyset$, 
    $P$ is a path.
    
    We first show that $V(P)\cap \s^3_{\bd^{20}(Q)}=\emptyset$. Suppose for a contradiction that $P$ contains a vertex $w$ of $\s^3_{\bd^{20}(Q)}$.
    Following the path $P$ from $v$ to $w$, 
    let $x$ be the first vertex contained in $\s^3_{\bd^{20}(Q)}$.
    Let $R_1=q_1q_2 \cdots q_t$ be the path $vPx$ where $q_1=v$ and $q_t=x$.
    Since $x\in \s^3_{\bd^{20}(Q)}$, there is a path $R_2$ of length at most $3$ from $x$ to $\bd^{20}(Q)$ in $\s^3_{\bd^{20}(Q)}$.
    See Figure~\ref{fig:traverse2} for an illustration.
    Note that no vertex of $(V(R_2)\setminus \{x\}) \cup \bd^{20}(Q)$ has a neighbor in $R_1-x$, because $x$ is the first vertex that is contained in $\s^3_{\bd^{20}(Q)}$.
    
     Let $d\in A\cap V(H)$. Since $d$ is a vertex of $H$, it has no neighbor in the internal vertices of $R_1$.
    As $d$ is almost $C$-dominating, 
    $d$ has a neighbor in $\bd^{20}(Q)$ and has a neighbor in $N_C[v]$.
    Let $T$ be a shortest path from $N_G(d)\cap (\bd^{20}(Q) \cup V(R_2))$ to $N_G(d)\cap N_C[v]$
    in \[  G[\bd^{20}(Q)\cup N_C[v]\cup V(R_1) \cup V(R_2)]. \]
     
    We claim that $T$ contains at least $3$ internal vertices of $R_1$. 
  	Suppose not.
	Then the distance from $N_C[v]$ to $\bd^{20}(Q) \cup V(R_2)$ in $T$ is at most $3$, and 
	$T\cup R_2$ contains a path of length at most $6$ from $N_C[v]$ to $\bd^{20}(Q)$.
	By Lemma~\ref{lem:distancebasic3}, $\dist_C(N_C[v], \bd^{20}(Q))<16$. 
 	However, this contradicts $\dist_C(v, \bd^{20}(Q))\ge 16$. 
	We conclude that $T$ contains at least $3$ internal vertices of $Q$.
   
    Thus, $G[V(T)\cup \{d\}]$ is a long hole contained in $\s^3_Q\cup D$.
    But such a long hole had to be considered during the first greedy packing. 
    This is a contradiction.

    Now, we verify the second statement. Let $a$ be an end of $P$ and $b$ be the neighbour of $a$ in $H$ such that $b \notin V(P)$. 
    Assume for a contradiction that $b\in \s^3_C$.
    Because $P$ is a connected component of $H\cap G[\s^3_Q]$, 
    $b$ is contained in $\s^3_{\overline{Q}}\setminus \s^3_Q$.
    By Lemma~\ref{lem:distancebasic3}, $a$ is contained in $\s^3_{\bd^{20}(Q)}\setminus \s^3_{\interior^{20}(Q)}$. But this is not possible because $V(P)\cap \s^3_{\bd^{20}(Q)}=\emptyset$.
    \end{proof}

      \begin{proposition}\label{prop:traversing}
      Let $\ell$ be the largest index for which $D_\ell$ exists in the second greedy packing algorithm.  Then $G$ contains $\ell-1$ vertex-disjoint long holes and $\abs{X_{\ref{prop:traversing}}}\le 183\ell$.
Moreover, $G-(X_{\ref{lem:tunnellemma4}}\cup X_{\ref{prop:Davoid}}\cup X_{\ref{prop:traversing}})$ has no long hole.
      \end{proposition}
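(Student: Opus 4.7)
The plan is to prove the proposition in three parts: a direct size bound on $X_{\ref{prop:traversing}}$, the existence of $\ell-1$ vertex-disjoint long holes among the $H_i$'s produced by the algorithm, and the claim that no long hole survives in $G - X$, where $X := X_{\ref{lem:tunnellemma4}} \cup X_{\ref{prop:Davoid}} \cup X_{\ref{prop:traversing}}$.

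First, I would observe that each successful iteration of the second greedy packing adds $A \cup N^{90}_C[v]$ to $M$, with $|A| \le 2$ and $|N^{90}_C[v]| \le 181$, so $|X_{\ref{prop:traversing}}| = |M_\ell| \le 183(\ell-1) \le 183\ell$.

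Second, I would argue that $H_1, \ldots, H_{\ell-1}$ can be chosen pairwise vertex-disjoint. Since $H_j \subseteq G_1 - M_j$, the hole $H_j$ avoids $A_i \cup N^{90}_C[v_i]$ for every $i < j$, so the sets $A_i$ are pairwise disjoint and the distinguished vertices $v_i$ are pairwise at $C$-distance greater than $90$. Applying Lemma~\ref{lem:2ndgreedycompo1} to each pair $(A_i, v_i)$ extracts a core subpath $P_i$ of $H_i$ through $v_i$ that lies in $\s^3_{N^{15}_C[v_i]}$ and whose two endpoints have neighbours (in $H_i$) outside $\s^3_C$. Since $\dist_C(v_i, v_j) > 90$, Lemma~\ref{lem:distancebasic3} rules out any path of length at most $6$ in $G - D$ between $\s^3_{N^{15}_C[v_i]}$ and $\s^3_{N^{15}_C[v_j]}$, so the cores $P_i$ are pairwise vertex-disjoint. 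The main obstacle is to control the ``outside'' pieces $H_i - V(P_i)$, which could in principle share vertices. I would resolve this by a rerouting argument: if $H_i$ and $H_j$ share a vertex $w$ outside both cores, use the prescribed entry/exit structure guaranteed by Lemma~\ref{lem:2ndgreedycompo1} together with the fact that $D$ is a clique (Lemma~\ref{lem:cdominating}) to splice the cores $P_i, P_j$ with the outside subpaths at $w$, producing two disjoint long holes that still meet $v_i, A_i$ and $v_j, A_j$ respectively. Iterating over pairs gives $\ell-1$ vertex-disjoint long holes.

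Third, for coverage, let $H$ be a long hole of $G - X$. Proposition~\ref{prop:Davoid} eliminates the $D$-avoiding case, so $H$ is $D$-traversing. Since $G - V(C)$ has no long hole by the hypothesis of Theorem~\ref{thm:main2}, $H$ contains some $v \in V(C) \setminus X = V(C_\ell)$, and by Corollary~\ref{cor:clique} the set $A := V(H) \cap D$ is a non-empty subset of $D_\ell$ of size at most $2$. The pair $(A, v)$ satisfies $A \subseteq D_i$ and $v \in C_i$ at every iteration $i \le \ell$, hence is chosen by the algorithm at some iteration $i$. At that iteration $G_1 - M_i \supseteq G - X$ already contains $H$, so the test in step $(3)$ succeeds and the algorithm updates $D_i$ and $C_i$, contradicting $A \subseteq D_\ell$ and $v \in C_\ell$. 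Thus $G - X$ contains no long hole, completing the proof.
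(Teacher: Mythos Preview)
Your size bound and your coverage argument are both fine and match the paper's. The genuine gap is in the disjointness step.

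Your ``rerouting argument'' is not a proof. You correctly observe that the cores $P_i^*\subseteq \s^3_{N^{15}_C[v_i]}$ are pairwise disjoint, but for the outside pieces you only say: splice at a shared vertex $w$ to produce two disjoint long holes still meeting $v_i,A_i$ and $v_j,A_j$, then iterate. There is no reason the spliced objects are \emph{induced} cycles, no reason the two resulting cycles are disjoint from each other (they both pass through $w$), no mechanism to keep each new hole meeting its own $A_i$ after you cut and paste the $D$-segments, and no termination argument for the iteration (fixing one pair may create new intersections with a third hole). None of Lemma~\ref{lem:2ndgreedycompo1} or the clique property of $D$ supplies these missing pieces.

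The paper does not reroute at all; it derives a direct contradiction from a single intersecting pair. From $H_1,H_2$ with a common vertex it extracts an induced path $R=v_1P_1w_1w_2P_2v_2$ (taking $w_1$ to be the first vertex on $H_1-A_1$ with a neighbour on $H_2-A_2$, and $w_2$ its closest neighbour to $v_2$). Using Lemma~\ref{lem:2ndgreedycompo1} and the distance lemma it shows $R$ must contain, for some $i$, the whole of $P_i^*$ from $v_i$ to one end $z$ together with the next two vertices $z_1,z_2$ of $H_i$, where $z\in \s^3_C\setminus \s^2_C$ and $z_1\notin \s^3_C$. Walking back from $z$ towards $v_i$ to the first vertex $y\in \s^2_C$, and forward along $R$ to the first vertex $u\in V(\h)$, yields an induced path $yRu$ that starts in $\s^2_q$ for some $q\in V(C)\setminus X_{\ref{lem:tunnellemma4}}$, has its next three vertices outside $\s^2_C$, and ends in $\h$ --- exactly the configuration forbidden by Lemma~\ref{lem:longout}. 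That lemma, which encodes the maximality of the sparse ear decomposition $\h$ and the fact that $X_{\ref{lem:tunnellemma4}}$ already kills all appendages, is the key structural input you are missing.
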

      \begin{proof}
      It is straightforward to verify that $\abs{X_{\ref{prop:traversing}}}\le 183\ell$.
      To prove that $G$ contains $\ell-1$ vertex-disjoint long holes, it suffices to prove that the holes constructed by the second greedy packing are vertex-disjoint.  
Let $H_1$ and $H_2$ be two $D$-traversing long holes constructed in the algorithm, 
and let $(A_1, v_1)$ and $(A_2, v_2)$ be the considered pairs when $H_1$ and $H_2$ are constructed, respectively.
For each $i\in [2]$, let $P_i:=H_i-A_i$, $Q_i:= G[N^{35}_C[v_i]]$, and let $P_i^*$ be the connected component of $H_i\cap G[\s^3_{Q_i}]$ containing $v_i$.
By Step (5) of  the second greedy packing, we have $\dist_C(V(Q_1), V(Q_2))\ge 20$ and by Lemma~\ref{lem:distancebasic3}, $\s^3_{Q_1}$ and $\s^3_{Q_2}$ are vertex-disjoint. In particular, $P_1^*$ and $P_2^*$ are vertex-disjoint.
	
	Suppose that $V(H_1) \cap V(H_2) \neq \emptyset$. Since $A_1\cap A_2=\emptyset$, we have $V(P_1) \cap V(P_2) \neq \emptyset$.
	Let $w_1$ be a vertex of $P_1$ that has a neighbor in $P_2$ such that $\dist_{P_1}(v_1, w_1)$ is minimum, 
	and let $w_2$ be a neighbor of $w_1$ in $P_2$ such that $\dist_{P_2}(v_2, w_2)$ is minimum.
	By the choice of $w_1$ and $w_2$, $v_1P_1w_1w_2P_2v_2$ is an induced path. Let $R:=v_1P_1w_1w_2P_2v_2$.
	
	We claim that 
    for some $i\in [2]$, $R$ contains a subpath of $P_i^*$ from $v_i$ to an end of $P_i^*$ and contains the next two vertices in $H_i$. If not, then by Lemma~\ref{lem:2ndgreedycompo1} there is a path of length at most 3 in $G-D$ between $\s^3_{\interior^{20}(Q_1)}$ and $\s^3_{\interior^{20}(Q_2)}$, and there is a path of length at most $9$ in $G-D$, whose ends are contained in $\interior^{20}(Q_1)$ and $\interior^{20}(Q_2)$, respectively. Since $\dist_C(\interior^{20}(Q_1), \interior^{20}(Q_2))\ge 28$, this contradicts Lemma~\ref{lem:distancebasic3}.
    
    Without loss of generality, $R$ contains a subpath of $P_1^*$ from $v_1$ to an end of $P_1^*$, say $z$, and contains the next two vertices $z_1$ and $z_2$ in $H_1$. Following the direction from $z$ to $z_1$ in $R$, we choose the first vertex $u$ contained in $\h$.
    
    By Lemma~\ref{lem:2ndgreedycompo1}, 
    $z_1$ is contained in $V(G)\setminus \s^3_C$ and $z$ is contained in $\s^3_C\setminus \s^2_C$. Following the direction from $z$ to $v_1$ in $P_1^*$, let $y$ be the first vertex contained in $\s^2_C$. Observe that $yRu$ is an induced path and the next three vertices of $yRu$ after $y$ are not in $\s^2_C$. However, such a path $yRu$ does not exist by Lemma~\ref{lem:longout}.  Thus, the holes constructed by the second greedy packing are vertex-disjoint.      

      It remains to prove that $G-(X_{\ref{lem:tunnellemma4}}\cup X_{\ref{prop:Davoid}}\cup X_{\ref{prop:traversing}})$ has no long hole.  By Proposition~\ref{prop:Davoid}, $G-(X_{\ref{lem:tunnellemma4}}\cup X_{\ref{prop:Davoid}})$ has no $D$-traversing long hole.  Thus, it suffices to prove that $G-(X_{\ref{lem:tunnellemma4}}\cup X_{\ref{prop:Davoid}}\cup X_{\ref{prop:traversing}})$ has no $D$-avoiding long hole. 
      Since $D$ is a clique by Lemma~\ref{lem:cdominating} and $H$ is a hole, $H$ contains at most 2 vertices of $D$, 
      and if it contains two vertices of $D$, then they are adjacent in $H$. 
      So, $H-D$ is a path. Since $G-V(C)$ has no long hole, $H$ contains a vertex of $C$.
      Let $v$ be a vertex of $H$ in $C$.
     By considering the pair $(V(H)\cap D, v)$ in the second greedy packing, we should have proceeded one more step, which is a contradiction.  
     \end{proof}
     
     We are now ready to prove Theorem~\ref{thm:main2}.
     
     \begin{proof}[Proof of Theorem~\ref{thm:main2}]
     We begin by running the first greedy packing to obtain $X_{\ref{lem:greedypacking}}$. We may assume that the largest index $\ell$ in the first greedy packing is at most $k$, else we obtain $k$ vertex-disjoint long holes by Lemma~\ref{lem:greedypacking}. Next, we construct a maximal good sparse ear decomposition $\h$ using Lemma~\ref{lem:sparsealgo}. 
     By Lemma~\ref{lem:narrowalgo}, we can either find $k$ vertex-disjoint appendages of $\h$, or a vertex set $X_{\ref{lem:narrowalgo}}$ of size at most $20k$ hitting all appendages. Since each appendage contains a long hole, we may assume that $X_{\ref{lem:narrowalgo}}$ exists.

     We then apply Proposition~\ref{prop:Davoid} to find $X_{\ref{prop:Davoid}}$ such that $X_{\ref{lem:tunnellemma4}}\cup X_{\ref{prop:Davoid}}$ hits all $D$-avoiding long holes.
     Lastly, we run the second greedy packing to obtain $X_{\ref{prop:traversing}}$.  
     We may assume that the largest index $\ell$ in the second greedy packing is at most $k$, else we are done by Proposition~\ref{prop:traversing}.
     Since each of the above steps is performed in polynomial time, the entire algorithm is a polynomial-time algorithm. 
     
     We claim that we may take $X_{\ref{thm:main2}}= X_{\ref{lem:tunnellemma4}}\cup X_{\ref{prop:Davoid}}\cup X_{\ref{prop:traversing}}$.  By Proposition~\ref{prop:traversing}, $X_{\ref{thm:main2}}$ hits all long holes of $G$, so   
     it only remains to bound the size of $X_{\ref{thm:main2}}$.  
     Recall that $X_{\ref{lem:tunnellemma4}}=X_{\ref{lem:greedypacking}} \cup X_{\ref{lem:sparsealgo}}\cup X_{\ref{lem:narrowalgo}}$ and $X_{\ref{lem:sparsealgo}}=N^{\buff}_C[B\cap V(C)] \cup (B\setminus V(C))$, where $B$ is the set of branching points of $\h$. By Lemma~\ref{lem:narrowalgo}, $|X_{\ref{lem:narrowalgo}}| \leq 20k$ .  If $\h$ has at least $s_k$ branching points, then by Lemma~\ref{lem:cycletohole}, we can output $k$ vertex-disjoint long holes. Thus, we may assume that $|B| < s_k$. Therefore, 
     \[
     |X_{\ref{lem:sparsealgo}}|=|N^{\buff}_C[B\cap V(C)] \cup (B\setminus V(C))| \leq 63s_k.
     \]
     By Lemma~\ref{lem:greedypacking}, $\abs{X_{\ref{lem:greedypacking}}} \leq 212k$, and  by Proposition~\ref{prop:traversing},  $\abs{X_{\ref{prop:traversing}}} \leq 183k$.  Finally, by Proposition~\ref{prop:Davoid}, $\abs{X_{\ref{prop:Davoid}}} \leq 380\abs{X_{\ref{lem:tunnellemma4}}}$.  Putting this all together,

     \[\abs{X_{\ref{thm:main2}}}=\abs{X_{\ref{lem:tunnellemma4}}\cup X_{\ref{prop:Davoid}}\cup X_{\ref{prop:traversing}}}\le 381(212k+63s_k+20k)+183k = \mu_k. \qedhere\] 
     \end{proof}

 \section{Open Problems} \label{sec:openproblems}
 In this paper, we proved that holes of length at least $6$ have the \EP{} property in $C_4$-free graphs.  Our proof is also the shortest known proof that holes have the \EP{} property.  Our first open problem is to answer Question~\ref{ques:EPlong} for larger values of $\ell$. By making our proof much longer, we believe our methods could answer Question~\ref{ques:EPlong} affirmatively for $\ell=7$, but new ideas are needed for $\ell \geq 8$.  
 
 Next, it is unclear if the $\mathcal{O}(k^2 \log k)$ bound in Theorem~\ref{thm:main} is optimal.  We conjecture that Theorem~\ref{thm:main} holds with a  $\mathcal{O}(k \log k)$ bounding function.  
 
 \begin{conjecture} \label{klogk}
 There exists a function $f(k)=\mathcal{O}(k \log k)$ such that for every $C_4$-free graph $G$ and every $k \in \N$, $G$ either contains $k$ vertex-disjoint holes of length at least $6$, or a set $X$ of at most $f(k)$ vertices such that $G-X$ has no hole of length at least $6$.
 \end{conjecture}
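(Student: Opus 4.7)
The $\mathcal{O}(k^2 \log k)$ bound in Theorem~\ref{thm:main} comes from invoking Theorem~\ref{thm:main2} up to $k-1$ times inside the peeling of Section~\ref{sec:overview}, each invocation paying $\mu_k = \mathcal{O}(k \log k)$. My plan is to retain the peeling structure and argue that almost every piece of the per-iteration hitting set amortises across the iterations, so that the \emph{total} bound matches a single application of Theorem~\ref{thm:main2}.

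Let $C_1, C_2, \ldots, C_\ell$ be the sequence of shortest long holes produced by the peeling (with $\ell \leq k-1$), and for each $j$ let $\h_j$, $B_j$, $X_{\ref{lem:greedypacking},j}$, $X_{\ref{lem:sparsealgo},j}$, $X_{\ref{lem:narrowalgo},j}$, $X_{\ref{prop:Davoid},j}$ and $X_{\ref{prop:traversing},j}$ denote the objects built by Theorem~\ref{thm:main2} applied to $(G_j - X_{j+1}, k, C_j)$. I would first argue that the holes produced by the greedy packings (Lemma~\ref{lem:greedypacking} and Proposition~\ref{prop:traversing}) and by the appendage subroutine (Lemma~\ref{lem:narrowalgo}) across all iterations can be combined, up to a constant factor, into a single packing of vertex disjoint long holes of $G$; if the combined packing ever reaches $k$ we output it, giving $\sum_j |X_{\ref{lem:greedypacking},j}|$, $\sum_j |X_{\ref{prop:traversing},j}|$ and $\sum_j |X_{\ref{lem:narrowalgo},j}|$ of size $\mathcal{O}(k)$. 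For the sparse-ear term the same philosophy should apply: using the fact that $\h_j \subseteq G_j - X_{j+1}$ and that $X_{j+1}$ monotonically absorbs earlier iterations, one can arrange the branching points of the $\h_j$'s to live in vertex disjoint subgraphs of $G$, so that the disjoint union of their degree-$3$ topological minors has $\sum_j |B_j|$ vertices. If this sum reaches $s_k$, Theorem~\ref{thm:simonovitz} and Lemma~\ref{lem:cycletohole} produce $k$ vertex disjoint long holes. Hence $\sum_j |X_{\ref{lem:sparsealgo},j}| = \mathcal{O}(s_k) = \mathcal{O}(k \log k)$, and by Proposition~\ref{prop:Davoid} the $D$-avoiding contribution amortises the same way.

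The remaining obstruction is the cost of $V(C_j)$ in the branch where $|V(C_j)| \leq \mu_k$: the current proof simply adds $V(C_j)$ wholesale and this is the sole source of the quadratic blow-up, contributing up to $\ell \cdot \mu_k$ in total. The plan to remove it is to process the short witnesses collectively rather than one at a time: split the peeled cycles into \emph{long} witnesses ($|V(C_j)| > \mu_k$, handled as above) and \emph{short} witnesses, and build a single globalised sparse ear decomposition $\h^\star$ rooted at the disjoint union $\bigcup_{j : C_j \text{ short}} V(C_j)$. The greedy-packing, appendage and $D$-avoiding subroutines of Sections~\ref{sec:greedy}--\ref{sec:avoiding} would then be replayed against $\h^\star$ as a multi-rooted witness, and the amortisation above guarantees they contribute only $\mathcal{O}(k \log k)$ in total.

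The hardest step will be adapting the cycle-distance lemmas (Lemmas~\ref{lem:distancebasic1}--\ref{lem:distancebasic3}) and the tunnel lemmas (Lemmas~\ref{lem:tunnellemma3}--\ref{lem:tunnellemma5}) from a single witness cycle $C$ to a disjoint family of shortest long holes of different lengths. Those arguments repeatedly swap a subpath of $C$ for a shorter path through $G - D$ and derive a contradiction from minimality of $|V(C)|$; when several witnesses coexist, a short path might link distinct witnesses and escape the estimates. A plausible remedy is a cleaning phase that forces every pair of short witnesses to be at distance $\gg \mu_k$ in $G$, merging any closer pair into a single combined witness, and then redefining the almost-$C$-dominating set $D$ relative to the whole family so that a version of Lemma~\ref{lem:cdominating} still yields a clique. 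The number of vertices absorbed by the cleaning must itself be $\mathcal{O}(k \log k)$; this is where the $C_4$-free hypothesis and Corollary~\ref{cor:clique} should enter. Getting this distance and clique bookkeeping right for a family-wide $D$ is, I expect, the main technical difficulty in establishing Conjecture~\ref{klogk}.
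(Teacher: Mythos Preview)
This statement is labelled a \emph{conjecture} in the paper and is explicitly listed among the open problems in Section~\ref{sec:openproblems}; the paper contains no proof of it. There is therefore nothing to compare your proposal against.

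What you have written is not a proof but a research outline, and you acknowledge as much in your final paragraph. The amortisation idea for the greedy-packing and appendage contributions is plausible, since the holes produced in iteration $j$ live in $G_j-X_{j+1}$ and are automatically disjoint from those in later iterations. The claim that the branching points of the $\h_j$'s can be merged into a single cubic graph to which Theorem~\ref{thm:simonovitz} applies is already much less clear: the $\h_j$'s are built in graphs from which the earlier $C_{j'}$ and hitting sets have been deleted, and there is no obvious reason why their union is still subcubic or why cycles in the union translate back to long holes via Lemma~\ref{lem:cycletohole}, whose proof uses the specific inductive structure of a single $\h$.

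The real gap is the one you flag yourself: replacing a single shortest long hole $C$ by a family of short witnesses breaks Lemmas~\ref{lem:consecutive}--\ref{lem:distancebasic3} at their core, since those lemmas repeatedly produce a shorter long hole and contradict minimality of $|V(C)|$. With several witnesses of different lengths present simultaneously, a detour through one witness can undercut another without contradiction, and your proposed ``cleaning phase'' that separates witnesses by distance $\gg\mu_k$ has no accompanying argument bounding its own cost by $\mathcal{O}(k\log k)$. Until that step is made rigorous, this remains a heuristic plan rather than a proof of Conjecture~\ref{klogk}.
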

 
 By the argument in the introduction, if Conjecture~\ref{klogk} is true, then the induced \EP{} property for cycles of length at least $4$ also holds with a $\mathcal{O}(k \log k)$ bounding function.  As noted in~\cite{KK20}, this would be tight via a reduction to the $\Omega(k \log k)$ lower bound that holds for cycles.  
 In general, smaller bounding functions give improved approximation factors for the corresponding packing and covering problems.  For example, the 
 $\mathcal{O}(k \log k)$ bounding function for planar minors proved in~\cite{CHJR19}, currently gives the best approximation algorithm for packing planar minors.  
 
Our final open problem concerns what other classes of graphs have the induced \EP{} property.  
For a fixed graph $H$, the class of \emph{$H$-subdivisions} is the class of graphs that can be obtained from $H$ by repeatedly subdividing edges. 
Theorem~\ref{kimkwon} says that the the class of $C_4$-subdivisions has the induced Erd\H{o}s-P\'osa property, 
and Theorem~\ref{counterexamples} says that for all $\ell \geq 5$, the class of $C_\ell$-subdivisions does not have the induced Erd\H{o}s-P\'osa property.  Kwon and Raymond~\cite{KwonR18} investigated for which graphs $H$ does the class of $H$-subdivisions have the induced Erd\H{o}s-P\'osa property. They designed three different types of constructions to show that for various graphs $H$, the class of $H$-subdivisions does not have the induced Erd\H{o}s-P\'osa property.  In each of their constructions, a large complete bipartite induced subgraph always appears. This is another reason why investigating the induced \EP{} property in $C_4$-free graphs is particularly interesting.  Our main theorem asserts that $C_6$-subdivisions have the induced \EP{} property in the class of $C_4$-free graphs.

\begin{question}
For which graphs $H$ does the class of $H$-subdivisions have the induced \EP{} property?  For which graphs $H$ does the class of $H$-subdivisions have the induced \EP{} property in $C_4$-free graphs?
\end{question}

Note that it is not true that for all graphs $H$, the class of $H$-subdivisions has the induced Erd\H{o}s-P\'osa property in $C_4$-free graphs. Kwon and Raymond~\cite{KwonR18} proved that if a forest $F$ has a connected component having at least two vertices of degree at least $3$, then the class of $F$-subdivisions does not have the induced Erd\H{o}s-P\'osa property in $C_4$-free graphs.

    \bibliographystyle{abbrv}
    \bibliography{longhole}

    \end{document}